\theoremstyle{plain}
\newtheorem{thm}{Theorem}[section]
\newtheorem{prop}[thm]{Proposition}
\newtheorem{lem}[thm]{Lemma}
\theoremstyle{definition}
\newtheorem{defi}[thm]{Definition}
\newtheorem*{nota}{Notation}
\theoremstyle{remark}
\newcommand{\p}{\partial}
\newcommand{\E}{\mathcal{E}}
\newcommand{\Bo}[1]{\dot{B}^{#1}_{2,1}}
\newcommand{\Bi}[1]{\dot{B}^{#1}_{2,\infty}}
\newcommand{\dive}{\mathrm{div}}
\newcommand{\delj}{\dot{\Delta}_{j}}
\newcommand{\ep}{\epsilon}
\newcommand{\vertiii}[1]{{\left\vert\kern-0.25ex\left\vert\kern-0.25ex\left\vert #1 
    \right\vert\kern-0.25ex\right\vert\kern-0.25ex\right\vert}}
\begin{document}
\title{Stability analysis of time-periodic solutions to the Navier-Stokes-Fourier system in 3D whole space}
\author{Naoto Deguchi}
\author{Naoto Deguchi \thanks{\it{E-mail address} : \rm{deguchi.n.d813@m.isct.ac.jp}} \\ 
\footnotesize{\it{Institute of Science Tokyo, 2-12-1 Ookayama, Meguro-ku, Tokyo 152-8551, Japan}}
}
\date{}
\maketitle

\begin{abstract}
    This paper concerns the large-time behavior of perturbations around a time-periodic solution to the Navier-Stokes-Fourier system in the three-dimensional whole space. The time-periodic solution exists when a given external force is small enough. We derive the time-decay estimate of the perturbation under the assumption that an initial perturbation is sufficiently small. The time-space integral estimate for the linearized semigroup around the constant state in the Besov spaces is effectively applied in the proof.
\end{abstract}

\section{Introduction}

The motion of a compressible viscous and heat conducting fluid is described by the Navier-Stokes-Fourier system for the density $\rho$, the velocity $v=(v_1,v_2,v_3)$ and the temperature $\theta$:
\begin{equation} \label{eq}
    \left\{
    \begin{aligned}
        &\partial_t \rho + \dive(\rho v)=0\ \ \ \mathrm{in}\ \ \ \mathbb{R}\times\mathbb{R}^3,\\
        &\partial_t v + v\cdot \nabla v = \frac{\mu}{\rho}\Delta v + \frac{\mu+\mu'}{\rho}\nabla\dive\,v-\frac{\nabla P}{\rho} + f\ \ \ \mathrm{in}\ \ \ \mathbb{R}\times\mathbb{R}^3,\\
        &\partial_t \theta + v\cdot\nabla \theta + \frac{\theta \partial_\theta P}{c_V\rho}\dive\,v=\frac{\kappa}{c_V\rho } \Delta\theta + \frac{\Psi}{c_V\rho}\ \ \ \mathrm{in}\ \ \ \mathbb{R}\times\mathbb{R}^3.
    \end{aligned}
    \right.
\end{equation}
Here, $f=f(t,x)$ is the external body force, $P$ is the pressure, $\mu$ and $\mu'$ are the viscous coefficients, $\kappa$ is the heat conductivity coefficient, $c_V$ is the specific heat at constant volume and $\Psi=\Psi(v)$ is the dissipation function given by
\begin{align*}
    \Psi=\frac{\mu}{2}(\partial_i v^j+\partial_j v^i)^2 + \mu' (\partial_{i}v^i)^2.
\end{align*}
Throughout this paper, we assume that the coefficients $\mu$, $\mu'$, $\kappa$ and $c_V$ are constants which satisfy $\kappa,c_V>0$ and the thermodynamic restrictions 
\begin{align*}
    \mu>0,\ \ \ \frac{2}{3}\mu+\mu'\geq 0.
\end{align*}
The pressure $P$ is assumed to be a smooth function of $\rho$ and $\theta$ satisfying the stability conditions:
\begin{align*}
    \partial_{\rho} P>0\ \ \ \mathrm{and}\ \ \  \partial_{\theta}P>0.
\end{align*}
Let $f$ be a time-periodic force with time-period $T>0$, that is,
\begin{align*}
    f(t,x)=f(t+T,x)\ \ \ \mathrm{for\ any}\ \ \ t\in\mathbb{R},\ x\in\mathbb{R}^3.
\end{align*}
A flow generated by $f$ is then described as the solution to the following time-periodic problem:
\begin{equation} \label{teq}
    \left\{
    \begin{aligned}
        &\partial_t \rho_T + \dive(\rho_T v_T)=0\ \ \ \mathrm{in}\ \ \ \mathbb{R}\times\mathbb{R}^3,\\
        &\partial_t v_T + v_T\cdot \nabla v_T = \frac{\mu}{\rho_T}\Delta v_T + \frac{\mu+\mu'}{\rho_T}\nabla\dive\,v_T-\frac{\nabla P}{\rho_T} + f \ \ \ \mathrm{in}\ \ \ \mathbb{R}\times\mathbb{R}^3,\\
        &\partial_t \theta_T + v_T\cdot\nabla \theta_T + \frac{\theta_T \partial_\theta P}{c_V\rho_T}\dive\,v_T=\frac{\kappa}{c_V\rho_T } \Delta\theta_T + \frac{\Psi}{c_V\rho_T}\ \ \ \mathrm{in}\ \ \ \mathbb{R}\times\mathbb{R}^3,\\
        &(\rho_T,v_T,\theta_T)(t,x) = (\rho_T,v_T,\theta_T)(t+T,x)\ \ \ \mathrm{for}\ \ \ (t,x)\in\mathbb{R}\times\mathbb{R}^3.
    \end{aligned}
    \right.
\end{equation}
In this paper, we consider the time-periodic problem (\ref{teq}) around the constant state
\begin{align*}
    (\rho_\infty,0,\theta_\infty)\ \ \ \mathrm{with}\ \ \ \rho_\infty,\theta_\infty>0.
\end{align*}

The time-periodic problem has been extensively studied for the incompressible Navier-Stokes equation; we refer here to Maremonti \cite{MR1107016}, Kozono-Nakao \cite{MR1373173}, Yamazaki \cite{MR1777114}, Galdi-Silvestre \cite{MR2603768}, Kobayashi \cite{MR2589952} and Ku\v{c}era \cite{MR2610568} among many others. In particular, Yamazaki \cite{MR1777114} derived time-decay estimates for perturbations around a uniformly bounded in time solution, including a time-periodic solution, to the incompressible Navier-Stokes equation in unbounded domains such as the whole space, the half-space, or exterior domains of dimension $n \geq 3$ under the assumption of smallness for both the external force and the initial perturbation. For the isentropic compressible Navier-Stokes equation, the existence and stability of the time-periodic solution were studied by Valli \cite{MR753158} in a 3D bounded domain, and by Kagei-Tsuda \cite{MR3274764}, Tsuda \cite{MR3437858} in $\mathbb{R}^n$, $n\geq 3$. For the non-isentropic case, Ma, Ukai and Yang \cite{MR2595722} proved the existence and stability result for the compressible Navier-Stokes system in $\mathbb{R}^n$, $n\geq 5$ and Tsuda proved the existence and stability result for the compressible Navier-Stokes-Korteweg system in $\mathbb{R}^3$. We also mention that Feireisl et al. \cite{MR2917121} show the existence of at least one weak time periodic solution to the Navier-Stokes-Fourier system in a 3D bounded domain for large data. The time decay estimate of the perturbation around a time-periodic solution was derived by Ma, Ukai and Yang \cite{MR2595722} for the Navier-Stokes-Fourier system under the smallness assumptions. They prove the decay estimate
\begin{align} \label{udecay}
    &\|(\rho-\rho_T,v-v_T,\theta-\theta_T)(t)\|_{H^{N-1}}\nonumber \\
    &\hspace{50pt}\lesssim (1+t)^{-\frac{n}{4}} \|(\rho_0 - \rho_T(0), v_0 - v_T(0),\theta_0-\theta_T(0))\|_{L^1\cap H^{N-1}},
\end{align}
where $N\in\mathbb{Z}_{\geq n+2}$ and $H^{N-1}$ is the inhomogeneous Sobolev space defined in the end of this section. Here, $(\rho,v,\theta)$ is a global solution to \eqref{eq} in $\mathbb{R}^n$, $n\geq 5$ with initial data $(\rho_0,v_0,\theta_0)$ and $(\rho_T,v_T,\theta_T)$ is the corresponding time-periodic solution. The decay rate in \eqref{udecay} corresponds to that of the heat semigroup. However, the proof of decay estimate in \cite{MR2595722} relies on the dimensional assumption $n \geq 5$. In this paper, we aim to establish the decay estimates for perturbations around a time-periodic solution to the Navier-Stokes-Fourier system in the physically important three-dimensional case. 

 Let us state our main theorem, which derives the decay estimate for the perturbation.


\begin{thm} \label{timethm}
Let $T>0$ and $k\in\mathbb{Z}_{\geq 5}$. There exists a small $\delta>0$ such that if a time-periodic force $f$ with time period $T>0$ satisfies
\begin{align*}
    \|f\|_{C(\mathbb{R};\dot{B}^{-\frac{3}{2}}_{2,\infty}\cap\dot{H}^{k-1})}\leq \delta,
\end{align*}
then the following assertions hold. Here, the space $\dot{B}_{2,\infty}^{\frac{1}{2}}$ is the Besov space defined in Section $2$ below. 

\begin{enumerate}
    \item[$($\textup{i}$)$] \textup{(Existence of time-periodic solution)} There exists a unique time-periodic solution $(\rho_T,v_T, \theta_T)$ satisfying 
        \begin{align*}
            \|(\rho_T-\rho_\infty,v_T,\theta_T-\theta_\infty)\|_{C(\mathbb{R};\dot{B}^{\frac{1}{2}}_{2,\infty}\cap \dot{H}^k)} \lesssim \delta.
        \end{align*}
    \item[$($\textup{ii}$)$] \textup{(Stability)} If initial data $(\rho_0,v_0,\theta_0)$ satisfy
    \begin{align*}
        \|(\rho_0-\rho_\infty,v_0,\theta_0-\theta_\infty)\|_{\Bi{\frac{1}{2}}\cap \dot{H}^k}\leq \delta,
    \end{align*}
    then there exists a unique global solution $(\rho,v,\theta)$ of $(\ref{eq})$ in $(0,\infty)\times \mathbb{R}^3$ with initial data $(\rho_0,v_0,\theta_0)$ and has the property that
    \begin{align*}
        \|(\rho-\rho_\infty,v,\theta-\theta_\infty)\|_{C([0,\infty);\Bi{\frac{1}{2}}\cap\dot{H}^k)}\lesssim \delta.
    \end{align*}
    In addition, for any $\ep>0$, if $\delta>0$ is sufficiently small and
    \begin{align*}
        (\rho_0-\rho_T(0),v_0-v_T(0),\theta_0-\theta_T(0))\in L^p
    \end{align*}
    for some $1\leq p\leq 2$, then we have the time decay estimate:
    \begin{align} \label{maindecay}
        &\|(\rho-\rho_T,v-v_T,\theta-\theta_T)(t)\|_{\dot{H}^{s}} \nonumber\\
        &\hspace{10pt}\lesssim_s (1+t)^{-\frac{s}{2}-\frac{3}{2}\left(\frac{1}{p}-\frac{1}{2}\right)} \|(\rho_0-\rho_T(0),v_0-v_T(0),\theta_0-\theta_T(0))\|_{L^p\cap \dot{H}^k}
    \end{align}
    for any $-3/2+\ep\leq s\leq 3/2-\epsilon$ with $s/2+3/2(1/p-1/2)> 0$.
\end{enumerate}
\end{thm}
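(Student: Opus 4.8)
\emph{Reformulation.} The plan is to recast everything as a perturbation of the constant state. Writing $U=(\rho-\rho_\infty,\,v,\,\theta-\theta_\infty)$, both \eqref{eq} and \eqref{teq} take the form
\[
  \partial_t U + \mathcal{L}U = \mathcal{N}(U) + \mathcal{F},\qquad \mathcal{F}=(0,f,0),
\]
where $\mathcal{L}$ is the constant-coefficient linearization of the Navier--Stokes--Fourier system at $(\rho_\infty,0,\theta_\infty)$ and $\mathcal{N}(U)$ gathers the quadratic-and-higher terms. The operator $\mathcal{L}$ is of hyperbolic--parabolic type: the velocity and temperature blocks are parabolic while the density is transported, so the whole argument rests on the dissipative structure of $\mathcal{L}$ together with product (paraproduct) estimates for $\mathcal{N}$ in the working space $X:=\Bi{1/2}\cap\dot{H}^{k}$. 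I would isolate two families of linear bounds for $\mathcal{L}$. First, uniform-in-time energy/dissipation estimates in $X$, obtained by the standard symmetrization of the compressible system (a Lyapunov functional with an equivalent-to-$X$ leading part and a coercive dissipation), which supply the algebra needed for both existence and the global bound. Second, $L^p$--$\dot H^s$ decay bounds of the form $\|e^{-t\mathcal{L}}g\|_{\dot H^{s}}\lesssim (1+t)^{-s/2-\frac32(1/p-1/2)}\|g\|_{L^p\cap\dot H^{k}}$, obtained from a low/high frequency split of the symbol: at low frequencies $\mathcal{L}$ behaves like a heat/damped-wave operator giving the polynomial rate, while at high frequencies a spectral gap gives exponential decay. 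The role of the $\Bi{1/2}$ factor is precisely to absorb the non-$L^2$ low-frequency tail (the $1/|x|$ spatial decay noted in the introduction) that is invisible to $\dot H^{k}$.

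\emph{Part \textup{(i)}.} Since $e^{-t\mathcal{L}}$ applied to a source is time-integrable with the above rates, I would realize the time-periodic solution as the mild (pullback) solution
\[
  U_T(t)=\int_{-\infty}^{t}e^{-(t-\tau)\mathcal{L}}\bigl(\mathcal{N}(U_T)+\mathcal{F}\bigr)(\tau)\,d\tau,
\]
whose $T$-periodicity is inherited from that of $f$; equivalently this is the fixed point produced by inverting $(\mathrm{Id}-e^{-T\mathcal{L}})$ on $X$, which is legitimate because the decay estimate rules out a unit eigenvalue. The existence and uniqueness then follow from a Banach fixed point on a small ball of $C(\mathbb{R};X)$: the linear solve is controlled by $\delta$ through the decay bound applied with $\mathcal{F}=(0,f,0)$ and the hypothesis $\|f\|_{C(\mathbb{R};\Bi{-3/2}\cap\dot H^{k-1})}\le\delta$, while $\mathcal{N}$ is at least quadratic and hence contracting for $\delta$ small. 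This yields the bound $\|U_T\|_{C(\mathbb{R};X)}\lesssim\delta$.

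\emph{Part \textup{(ii)}.} For the stability statement I would set $u=U-U_T$, so that
\[
  \partial_t u + \mathcal{L}u = \mathcal{N}(u) + \mathcal{B}(U_T)u,
\]
where $\mathcal{B}(U_T)$ collects the small, $T$-periodic variable-coefficient terms arising from linearizing about $U_T$. Global existence with $\|u\|_{C([0,\infty);X)}\lesssim\delta$ follows from local well-posedness together with a uniform-in-time a priori estimate in $X$: the dissipation of $\mathcal{L}$ dominates both the quadratic $\mathcal{N}(u)$ and the perturbation $\mathcal{B}(U_T)u$, the latter because $U_T$ is small in $X$ by part \textup{(i)}. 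For the decay estimate \eqref{maindecay} I would run a time-weighted bootstrap: with Duhamel's formula $u(t)=e^{-t\mathcal{L}}u_0+\int_0^t e^{-(t-\tau)\mathcal{L}}\bigl(\mathcal{N}(u)+\mathcal{B}(U_T)u\bigr)(\tau)\,d\tau$ and the norm $M(t)=\sup_{0\le\tau\le t}(1+\tau)^{s/2+\frac32(1/p-1/2)}\|u(\tau)\|_{\dot H^{s}}$, apply the linear decay bound to the data term and split the Duhamel integral at $\tau=t/2$, estimating the quadratic source by product estimates in $\dot H^{s}$. The exponent condition $s/2+\tfrac32(1/p-1/2)>0$ is exactly what makes the resulting time integral converge, and the range $-3/2+\ep\le s\le 3/2-\ep$ is what makes the $\dot H^{s}$ product estimates and the low-frequency integrals finite; one then closes $M(t)\lesssim \|u_0\|_{L^p\cap\dot H^{k}}+M(t)^{2}$ for $\delta$ small.

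\emph{Main obstacle.} I expect the principal difficulty to be the low-frequency analysis in dimension three: proving the decay bound in a form genuinely compatible with the $\Bi{1/2}$ norm, and then showing that the variable-coefficient, time-periodic term $\mathcal{B}(U_T)u$ — which is not a convolution and carries the background $U_T$ — does not spoil the decay rate. Controlling this term requires that $U_T$ be small in a norm strong enough (the full $\Bi{1/2}\cap\dot H^{k}$ bound from part \textup{(i)}) to be absorbed into the same time-weighted bootstrap, and this is the delicate point at which the two halves of the theorem must be made to interlock.
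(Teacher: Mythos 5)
Your proposal for part (i) contains a genuine gap. You construct the periodic solution by integrating from $-\infty$, equivalently by inverting $(\mathrm{Id}-e^{-T\mathcal{L}})$, and justify this by saying the decay estimate ``rules out a unit eigenvalue.'' In the whole space $\mathbb{R}^3$ this fails: the linearized operator has no spectral gap, the symbol of $e^{-T\mathcal{L}}$ tends to the identity as $|\xi|\to 0$, so $1$ lies in the essential spectrum of $e^{-T\mathcal{L}}$ and the inverse is unbounded on $\Bi{1/2}\cap\dot H^{k}$. Equivalently, with $f\in\Bi{-3/2}$ the semigroup bound gives $\|e^{\tau A}\mathcal{F}\|_{\Bi{1/2}}\lesssim(1+\tau)^{-1}\|\mathcal{F}\|_{\Bi{-3/2}}$, and $(1+\tau)^{-1}$ is not integrable, so your mild integral $\int_{-\infty}^{t}$ does not converge absolutely in the working space; this borderline failure is exactly why the paper needs the delicate duality argument of Lemma \ref{phidecay} (splitting the Duhamel integral and using the space--time estimate (\ref{semidecay3})) even over finite time intervals. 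The paper avoids the inversion altogether: following Valli, it takes the global solution $U^*$ of the initial value problem with zero data, uses the time-weighted estimate for \emph{differences of two solutions} (Proposition \ref{diffdecayapri}, applied to $U^*(\cdot)$ and $U^*(\cdot+(n-m)T)$, which solve the same problem by $T$-periodicity of $f$) to show $\{U^*(nT)\}_n$ is Cauchy in $\Bi{1}\cap\dot H^{4}$, and identifies the limit as the initial datum of the periodic solution via uniqueness.

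For part (ii) your time-weighted bootstrap is the right general shape, but the step ``estimating the quadratic source by product estimates in $\dot H^{s}$'' hides the main structural obstruction, which is specific to the Navier--Stokes--Fourier system: the temperature equation contains the quadratic terms $v\cdot\nabla\theta$ and $(\theta\partial_\theta P/c_V\rho)\,\dive v$, which are \emph{not} in divergence form. To run the bootstrap down to $s$ near $-3/2$ one must bound the source in $\Bi{s-2}$, and the bilinear estimates (Lemma \ref{bilinearlemma}) require the sum of regularities to be positive, which fails at this level for a raw product; divergence-form terms survive because $\|\dive F\|_{\Bi{s-2}}\sim\|F\|_{\Bi{s-1}}$ trades one derivative for one unit of regularity. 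The paper's key device, absent from your proposal, is the modified energy functional $\mathcal{E}$ in (\ref{modiene}): rewriting the system in the variables $(\sigma,m,\mathcal{E})$ as in (\ref{energychap3}) puts every quadratic nonlinearity into divergence or potential form, leaving only genuinely higher-order remainders $H_2$, after which Lemma \ref{phidecay} closes the low-frequency estimates (Propositions \ref{lowd} and \ref{low}), with the $\dot H^{k}$ part handled by the energy method. Without this (or an equivalent) structural reduction, your closure $M(t)\lesssim\|u_0\|_{L^p\cap\dot H^{k}}+M(t)^2$ cannot be justified at the stated range of $s$.
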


We note that the decay rate obtained in \eqref{maindecay} coincides with that of the heat semigroup. The employment of the Besov space is motivated by the need to control the slow spatial decay of the time-periodic solution. Indeed, since the diffusion terms 
\begin{align*}
    \mu\Delta v+(\mu+\mu')\nabla\dive v\ \ \ \mathrm{and}\ \ \ \frac{\kappa}{c_V}\Delta \theta
\end{align*}
are classified as elliptic operators, the spatial decay orders of the velocity $v_T$ and the temperature $\theta_T-\theta_\infty$ are expected at most $O(1/|x|)$ as $|x|\to\infty$, respectively, by comparing the spatial decay order of the Newtonian potential in $\mathbb{R}^3$. This slow decay suggests that the time-periodic solution does not belong to $L^2(\mathbb{R}^3)$. Previously, Ma, Ukai and Yang \cite{MR2595722} derived the decay estimate \eqref{udecay} in dimension $n\geq 5$ by combining the analysis using the linearized semigroup around the constant state $(\rho_\infty,0,\theta_\infty)$ and the energy estimates in $H^{N-1}$, $N\in\mathbb{Z}_{\geq n+2}$. In their work, the dimensional assumption $n\geq 5$ is used to control the $L^2$ norm of the time-periodic solution. To establish the decay estimate of the perturbation in dimension $n=3$, we develop the framework of perturbation analysis in the function spaces $\dot{B}^{1/2}_{2,\infty}\cap \dot{H}^k$, $k\geq \mathbb{Z}_{\geq 5}$, which spaces allow for slow decay, such as $1/|x|$. In addition, the spaces $\dot{B}^{1/2}_{2,\infty}\cap \dot{H}^k$, $k\geq \mathbb{Z}_{\geq 5}$ have good compatibility with the energy method developed by Matsumura and Nishida \cite{MR713680} for constructing strong solutions to the compressible Navier-Stokes-Fourier system, since the spaces are $L^2$-based space. Here, we mention that the space $\dot{B}^{1/2}_{2,1} \cap \dot{H}^3$ was used for the perturbation analysis of stationary solutions to the isentropic compressible Navier-Stokes equations in \cite{MR4803477}. 


The difficulty in proving Theorem \ref{timethm} arises from the convection terms in the Navier-Stokes-Fourier system. Since the spatial decay order of the time-periodic solution is expected to be at most ${O}(1/|x|)$, the convection term $v \cdot \nabla v$ appearing in the Navier-Stokes equations is expected to exhibit the same spatial decay order, ${O}(1/|x|^3)$, as the viscous term $\mu \Delta v$. Thus, we need to treat the convection terms carefully to prove the existence of a time-periodic solution and to derive the decay estimate for its perturbations even if the given external force is small enough. For this reason, Yamazaki and Tsuda transform the convection term $v\cdot\nabla v$ to the divergence forms $\dive(v\otimes v)$ and $\dive(\rho v\otimes v)$, respectively, by using the mass conservation low, and then establish a priori bound for the solutions to the Navier-Stokes equations.
However, in the Navier-Stokes-Fourier system case, the additional convection term $v\cdot\nabla \theta$ and the non-divergence form term $(\theta\partial_\theta P/c_V \rho)\dive v$ appear in the equation of the temperature in (\ref{eq}). In Section 3 below, to establish a priori estimate in the homogeneous Besov spaces, we shall introduce the modified energy functional
\begin{align} \label{modiene}
    &\mathcal{E}=\frac{1}{2}\left(1+\theta_\infty\frac{q_2}{p_2}\right)\rho v\cdot v + c_V\rho(\theta-\theta_\infty) \nonumber\\
    &\hspace{100pt}+\frac{p_1}{2\rho_\infty}\left(1+\theta_\infty\left(\frac{q_2}{p_2}-\frac{q_1}{p_1}\right)\right) (\rho-\rho_\infty)^2,
\end{align}
where $p_1=\p_\rho P(\rho_\infty,\theta_\infty)$, $p_2=\p_\theta P(\rho_\infty,\theta_\infty)$, $q_1=\p_\rho^2 P(\rho_\infty,\theta_\infty)$ and $q_2=\p_\rho \p_{\theta}P(\rho_\infty,\theta_\infty)$. By introducing the modified energy functional, let $\sigma=\rho-\rho_\infty$, $m=\rho v$ and $\eta=\theta-\theta_\infty$, then the equation (\ref{eq}) is transformed into the equation
\begin{equation}\label{eequ}
    \left\{
    \begin{aligned}
        &\partial_t \sigma + \dive\,m=0,\\
        &\partial_t m - \frac{\mu}{\rho_\infty}\Delta m -\frac{\mu+\mu'}{\rho_\infty}\nabla\dive\, m+ p_1\nabla \sigma +\frac{p_2}{c\rho_\infty}\nabla \E\\
        &\hspace{20pt}=-\frac{1}{\rho_\infty}\dive(m\otimes m)-\frac{p_2}{\rho_\infty}\nabla(\sigma \mathcal{E})-p_2\nabla\left(\eta-\frac{1}{c_V\rho_\infty}\mathcal{E}\right) +\rho f+\mathcal{H}_1,\\
        &\partial_t \E - \frac{\kappa}{c_V \rho_\infty}\Delta \E + \frac{ p_2\theta_\infty}{c_V\rho_\infty}\dive\,m=-c_V\dive(\eta m)\\
        &\hspace{10pt}- \left(1+\theta_\infty \frac{q_2}{p_2}\right)(p_1\dive(\sigma v)+ p_2 \dive(\eta v))+\left(1+\theta_\infty\frac{q_2}{p_2}\right)\rho_\infty f\cdot v + \mathcal{H}_2,
    \end{aligned}
    \right.
\end{equation}
where $\mathcal{H}_1$ and $\mathcal{H}_2$ are the higher order terms. Except for the external force terms and the higher-order terms, all nonlinear terms in (\ref{eequ}) are written in divergence forms or potential forms. This transform is used to establish a priori estimates for the initial value problem (\ref{eq}) in the proofs of Propositions \ref{lowd} and \ref{low} below. The proof of the time-decay estimate (\ref{maindecay}) is inspired by the strategy used in \cite{MR4803477}. The proof will be carried out by writing the perturbation $U=(\sqrt{p_1}\sigma,\ m,\ \sqrt{c_V/\theta_\infty}\E)$ as the Duhamel form:
\begin{align*}
    U(t)=e^{tA}U(0) + \int_0^t e^{\tau A}\Phi(t-\tau)d\tau
\end{align*}
where $t>0$, 
\begin{align*} 
    A=\left[\begin{matrix*}
        0 & -\sqrt{p_1} \dive & 0\\
        -\sqrt{p_1} \nabla & \frac{\mu}{\rho_\infty}\Delta + \frac{\mu+\mu'}{\rho_\infty}\nabla\dive  &- \frac{p_2}{\rho_\infty}\sqrt{\frac{\theta_\infty}{c_V}}\nabla\\
        0 & - \frac{p_2}{\rho_\infty}\sqrt{\frac{\theta_\infty}{c_V}}\dive & \frac{\kappa}{c\rho_\infty}\Delta
    \end{matrix*}\right],
\end{align*}
and $\Phi$ denotes the remainder terms that contain the linear terms such as  $\dive(v_T\otimes m)$. The key step of the proof is to derive the time-weighted estimate for the Duhamel integral:
\begin{align*}
    &\sup_{0\leq t\leq \widetilde{T}_1}(1+t)^{\frac{s-s_0}{2}}\left\|\int_0^t e^{\tau A}\Phi(t-\tau)d\tau\right\|_{\Bi{s}}\\
    &\hspace{50pt}\lesssim \sup_{s_1\leq \tilde{s}\leq\frac{3}{2}-\ep}\ \sup_{0\leq t\leq \widetilde{T}_1} (1+t)^{\frac{\tilde{s}-s_0}{2}}\|\Phi(t)\|_{\Bi{\tilde{s}-2}\cap\Bi{\frac{3}{2}}},
\end{align*}
where $\ep>0$, $-3/2\leq s_0\leq 1/2$, $s_1=\max\{s_0,0\}$, $\max\{s_0,-3/2+\ep\}\leq s\leq 3/2-\ep$. By virtue of rewriting the convection term as the divergence form, the bilinear estimate in Lemma \ref{bilinearlemma} below allow us to prove the estimates such as
\begin{align*}
    \|\dive(v_T\otimes m)\|_{\Bi{\tilde{s}-2}\cap\Bi{\frac{3}{2}}}\lesssim \|v_T\|_{\Bi{\frac{1}{2}}\cap\dot{H}^3} \|m\|_{\Bi{\tilde{s}}\cap\dot{H}^3},\quad s_1\leq \tilde{s}\leq 3/2-\ep,
\end{align*}
so that if the time-periodic solution is sufficiently small and the high-frequency norm $\dot{H}^3$ of the perturbation is well controlled, then the linear operator $\dive(v_T\otimes \cdot)$ can be treated as a simple perturbation of the operator $A$. The time-weighted estimate in the high-frequency norm is proved by using the energy method developed by Matsumura and Nishida \cite{MR713680} in Proposition \ref{highd} below.

The outline of this paper is as follows: In Section 2, we present some lemmas related to the homogeneous Besov space, which will be frequently used in the subsequent sections. Section 3 is devoted to the proof of Theorem \ref{timethm}. The proof of Theorem \ref{timethm} is established by combining the local existence theorem with a priori estimates for the solutions to the initial value problem of (\ref{eq}). A priori estimate and the local existence result for the initial value problem are proved in Subsection 3.1 and Subsection 3.2, respectively. We show the uniform in time and the time-weighted a priori estimate for the solution in Proposition \ref{aprioriesti} and Proposition \ref{diffdecayapri} below, respectively. A priori estimates are performed for the homogeneous Besov norms and the homogeneous Sobolev norms in a parallel way. The proof of the estimate in the homogeneous Besov spaces is carried out by using the modified energy functional introduced in (\ref{modiene}). Here, the time-space norm estimate in (\ref{semidecay3}) below for the linearized semigroup of (\ref{eequ}) around the constant state $(\rho_\infty,0,\theta_\infty)$ plays a crucial role in deriving the homogeneous Besov norm estimates. (Cf. \cite{MR1777114}, \cite{MR4803477}.) The proof of the estimates in the homogeneous Sobolev norms is established by the energy method. The proof of the local existence is carried out by constructing the approximating sequence of the solution. The approximating sequence is constructed as the solution of the transport equation for the density and as the solution of the quasilinear parabolic equation for the velocity and the temperature, since the density causes the quasilinear effect in the equations of the velocity and the temperature in (\ref{eq}). By virtue of employment in this construction, we obtain the local existence result without the smallness assumption for initial data, unlike the local existence result in \cite{MR4803477}. Finally, the proof of Theorem \ref{timethm} is given in Subsection 3.3.

\begin{nota}
    The notation $A\lesssim_{\alpha} B$ means that there exists a constant $C$ depending on $\alpha$ such that $A\leq CB$. The notation $A\sim_\alpha B$ means that $A\lesssim_\alpha B$ and $B\lesssim_\alpha A$. We denote a commutator by $[X,Y]= XY-YX$. We write $\mathcal{S}$ for the set of all Schwartz functions on $\mathbb{R}^{3}$, and we write $\mathcal{S}'$ for the set of all tempered distributions on $\mathbb{R}^{3}$. The notations $\hat{\cdot}$ and $\mathcal{F}$ stand for the Fourier transform
    \begin{align*}
        \hat{u}(\xi) = \mathcal{F}(u)(\xi)= \int_{\mathbb{R}^{3}} e^{-i x \cdot \xi} u(x)dx,
    \end{align*}
    and the notation $\mathcal{F}^{-1}$ denotes the inverse Fourier transform. We denote the $L^{2}(\mathbb{R}^3)$ inner product by 
    \begin{align*}
        \langle u,v\rangle = \int_{\mathbb{R}^3} u vdx.
    \end{align*}
    Let $Z$ be a Banach space, $1 \leq p \leq \infty$ and $I \subset \mathbb{R}$ be an interval. We denote by $L^p(I; Z)$ the space of all strongly measurable functions $f: I \to Z$ such that
    \begin{align*}
        \|f\|_{L^p(I;Z)}=\| \|f(t)\|_Z \|_{L^{p}_t(I)}<\infty.
    \end{align*}
    Similarly, $C(I; Z)$ denotes the space of continuous functions $f: I \to Z$ equipped with the norm
    \begin{align*}
        \|f\|_{C(I;Z)}=\sup_{t\in I} \|f(t)\|_Z<\infty.
    \end{align*}
    For $I = [0, T_0)$ with $0 < T_0 \leq \infty$, we write $L^p_{T_0}(Z)$ and $C_{T_0}(Z)$ instead of $L^p(I; Z)$ and $C(I; Z)$, respectively. In particular, if $T_0 = \infty$, we simply write $L^p(Z)$ and $C(Z)$ by omitting the subscript. For any $s\in\mathbb{R}$, $H^s=H^s(\mathbb{R}^3)$ denotes the inhomogeneous Sobolev space by
    \begin{align*}
        H^s=\{u\in \mathcal{S}'\mid (1+|\cdot|^2)^{\frac{s}{2}}\hat{u}\in L^2\}
    \end{align*}
    with norm
    \begin{align*}
        \|u\|_{H^s}=\|(1+|\cdot|^2)^{\frac{s}{2}}\hat{u}\|_{L^2}.
    \end{align*}
\end{nota}

\section{Preliminary} 

In this section, we present the definition and several lemmas for the homogeneous Besov space, which are frequently used in this paper. The following basic properties for the homogeneous Besov space are proved in \cite[Lemma 2.1, Proposition 2.22, Theorem 2.25 and Proposition 2.29]{MR2768550}. 
\begin{defi}
    Let $s\in\mathbb{R}$ and $1\leq r\leq \infty$. Choose $\phi\in C^\infty$ supported in the annulus $\mathcal{C}=\{\xi\in \mathbb{R}^3 \mid 3/4 \leq |\xi| \leq 8/3\}$ such that
    \begin{align*}
        \sum_{j\in\mathbb{Z}}\phi(2^{-j}\xi)=1\ \ \ \mathrm{for}\ \ \ \xi\neq 0.
    \end{align*}
    We define dyadic blocks $(\delj)_{j\in\mathbb{Z}}$ by the Fourier multiplier
    \begin{align*}
        \delj u \equiv \mathcal{F}^{-1} [\phi(2^{-j}\cdot) \hat{u}].
    \end{align*} 
    Then, the homogeneous Besov space $\dot{B}^{s}_{2,r}=\dot{B}^{s}_{2,r}(\mathbb{R}^3)$ is defined by
    \begin{align*}
        \dot{B}^{s}_{2,r}=\{u\in\mathcal{S}'/\mathcal{P}\mid \|u\|_{\dot{B}^{s}_{2,r}}<\infty\}
    \end{align*}
    with norm
    \begin{align*}
        \|u\|_{\dot{B}^{s}_{2,r}}:=\|\{2^{js}\|\delj u\|_{L^2}\}_{j\in\mathbb{Z}}\|_{\ell^{r}(\mathbb{Z})},
    \end{align*}
    where $\mathcal{P}$ is the polynomial ring over $\mathbb{R}$.
\end{defi}

The following lemma shows the basic properties of the Besov space.

\begin{lem} \label{fund2}
    Let $s,\tilde{s}\in \mathbb{R}$, $1\leq r,\tilde{r} \leq \infty$ and $u,v\in\mathcal{S}'$.
     \begin{enumerate}[\normalfont{(}i\normalfont{)}]
        \item \textup{(Derivative)} For any $k\in\mathbb{Z}_{\geq 0}$, it follows that $\|\nabla^{k}u\|_{\dot{B}_{2,r}^{s}}\sim\|u\|_{\dot{B}_{2,r}^{s+k}}$.
        
        \item \textup{(Duality)} Let $r'$ be a conjugate exponent of $r$. Then, we have the following duality estimate:
        \begin{align*}
            \langle u,v \rangle \lesssim \|u\|_{\dot{B}^{s}_{p,r}}\|u\|_{\dot{B}^{-s}_{2,r'}}\ \ \ \mathrm{and}\ \ \ \|u\|_{\dot{B}^{s}_{2,r}}\lesssim \sup_{\psi}\langle u,\psi\rangle,
        \end{align*}
        where the supremum is taken over the Schwartz functions $\psi$ with 
        \begin{align*}
            \|\psi\|_{\dot{B}^{-s}_{2,r'}} \leq 1
        \end{align*}
        and $0\notin \operatorname{supp}\mathcal{F}\psi$.
        \item \textup{(Interpolation)} Let $s_1 < s_2$ satisfy $s=(1-\theta)s_1 + \theta s_2$ for some $0<\theta<1$. Then, the interpolation inequality
        \begin{align} \label{interpolation}
            \|u\|_{\dot{B}^{s}_{2,r}} \lesssim_{\theta, s_1,s_2}  \|u\|^{1-\theta}_{\dot{B}^{s_1}_{2,\infty}}\|u\|^{\theta}_{\dot{B}^{s_2}_{2,\infty}} 
        \end{align}
        holds.
        
        \item \textup{(Fatou property)} Assume $s<3/2$ or $s=3/2$, $r=1$. If $\{u_n\}_{n\in\mathbb{Z}_{\geq 0}}$ is a bounded sequence in $\dot{B}_{2,r}^{s}\cap \dot{B}_{2,\tilde{r}}^{\tilde{s}}$, then there exists a subsequence of $\{u_n\}_n$ $($without relabeling$)$ and $u\in \dot{B}^{s}_{2,r}\cap \dot{B}_{2,\tilde{r}}^{\tilde{s}}$ such that 
        \begin{align*}
            \lim_{n\to \infty} u_{n} = u\ \ \mathrm{in}\ \ \mathcal{S}'\ \ \ \mathrm{and}\ \ \ \|u\|_{\dot{B}^{s}_{2,r}\cap \dot{B}_{2,\tilde{r}}^{\tilde{s}}} \lesssim \liminf_{n\to\infty} \|u_n\|_{\dot{B}^{s}_{2,r}\cap \dot{B}_{2,\tilde{r}}^{\tilde{s}}}.
        \end{align*}
    \end{enumerate}
\end{lem}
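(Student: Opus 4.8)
The four assertions are all standard consequences of the Littlewood-Paley definition of $\dot B^s_{2,r}$, and the plan is to reduce each to a statement about the single-frequency pieces $\delj u$, whose Fourier transforms are supported in the fixed dyadic annulus $2^j\mathcal C$. For (i) I would use that $\delj$ and $\n^k$ commute (both are Fourier multipliers) together with the Bernstein-type scaling lemma of \cite{MR2768550}: if $\hat f$ is supported in $2^j\mathcal C$ then $\|\n^k f\|_{L^2}\sim 2^{jk}\|f\|_{L^2}$, with constants independent of $j$ (the upper bound by writing the symbol as $2^{jk}(i2^{-j}\xi)^{\otimes k}$ and rescaling to the fixed annulus, the lower bound because $|\xi|\ge\frac34 2^j$ on that annulus). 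Applying this to $f=\delj u$ gives $2^{js}\|\delj\n^k u\|_{L^2}\sim 2^{(s+k)j}\|\delj u\|_{L^2}$, and taking the $\ell^r(\mathbb Z)$ norm yields $\|\n^k u\|_{\dot B^{s}_{2,r}}\sim\|u\|_{\dot B^{s+k}_{2,r}}$.

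For (ii) the forward estimate rests on almost-orthogonality: since $2^j\mathcal C$ and $2^{j'}\mathcal C$ are disjoint for $|j-j'|\ge 2$, one has $\langle\delj u,\deljj v\rangle=0$ unless $|j-j'|\le1$, so
\[
\langle u,v\rangle=\sum_{|j-j'|\le1}\langle\delj u,\deljj v\rangle\lesssim\sum_j\|\delj u\|_{L^2}\|\deljj v\|_{L^2},
\]
and pairing the factor $2^{js}$ with $2^{-js}$ and applying H\"older in $\ell^r$--$\ell^{r'}$ gives $\langle u,v\rangle\lesssim\|u\|_{\dot B^{s}_{2,r}}\|v\|_{\dot B^{-s}_{2,r'}}$. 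For the converse characterization I would choose, by $\ell^r$--$\ell^{r'}$ duality, a nonnegative sequence $(c_j)$ with $\|(c_j)\|_{\ell^{r'}}\le1$ nearly saturating $\|u\|_{\dot B^{s}_{2,r}}$, then build a test function $\psi=\sum_j c_j 2^{js}\,\delj u/\|\delj u\|_{L^2}$ (truncated to finitely many $j$ and regularized so that $\psi\in\mathcal S$ with $0\notin\operatorname{supp}\mathcal F\psi$), for which $\langle u,\psi\rangle$ is comparable to $\|u\|_{\dot B^{s}_{2,r}}$ while $\|\psi\|_{\dot B^{-s}_{2,r'}}\lesssim1$.

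For (iii), writing $a_j=\|\delj u\|_{L^2}$ and setting $A=\|u\|_{\dot B^{s_1}_{2,\infty}}$, $B=\|u\|_{\dot B^{s_2}_{2,\infty}}$, the hypotheses give $a_j\le A\,2^{-js_1}$ and $a_j\le B\,2^{-js_2}$, whence
\[
2^{js}a_j\le\min\bigl(A\,2^{j(s-s_1)},\,B\,2^{-j(s_2-s)}\bigr).
\]
Since $s-s_1>0$ and $s_2-s>0$, the two bounds decay in opposite directions, so the $\ell^r$ sum is a convergent geometric series dominated by its value at the crossover index $j_0$ determined by $2^{j_0(s_2-s_1)}=B/A$; evaluating there produces $A^{1-\theta}B^{\theta}$ with $\theta=(s-s_1)/(s_2-s_1)$, which gives the claim for every finite $r$, the case $r=\infty$ being immediate from the pointwise-in-$j$ bound.

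For (iv), the main work is a compactness-plus-lower-semicontinuity argument. Each $\delj u_n$ is bounded in $L^2$ with Fourier support in the fixed annulus $2^j\mathcal C$, so by Bernstein these are bounded in every $C^m$ on compact sets; a diagonal extraction over $j\in\mathbb Z$ then yields a subsequence with $\delj u_n\rightharpoonup v_j$ weakly in $L^2$ and in $\mathcal S'$. I would set $u=\sum_j v_j$, verify convergence in $\mathcal S'/\mathcal P$, and conclude $\|u\|_{\dot B^{s}_{2,r}\cap\dot B^{\tilde s}_{2,\tilde r}}\lesssim\liminf_n\|u_n\|_{\dot B^{s}_{2,r}\cap\dot B^{\tilde s}_{2,\tilde r}}$ from lower semicontinuity of the $L^2$ norm under weak convergence together with Fatou's lemma for the $\ell^r$ sums. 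The hardest point --- and the reason for the restriction $s<3/2$ (or $s=3/2$, $r=1$) --- is precisely the low-frequency summation $\sum_{j\to-\infty}v_j$: this is the borderline $s<N/2$ with $N=3$ at which $\dot B^{s}_{2,r}$ admits a canonical realization inside $\mathcal S'$ rather than only inside $\mathcal S'/\mathcal P$, so that the limit $u$ is a genuine tempered distribution and $u_n\to u$ holds in $\mathcal S'$; controlling this tail is where I expect the real difficulty to lie.
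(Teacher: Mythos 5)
The paper offers no proof of this lemma at all: it is quoted verbatim from \cite[Lemma 2.1, Proposition 2.22, Theorem 2.25 and Proposition 2.29]{MR2768550}, so your proposal can only be measured against the standard textbook arguments, which it largely reproduces. Parts (i) and (iii) are correct and are exactly the cited arguments (Bernstein's lemma on the annulus $2^j\mathcal{C}$; the crossover-index optimization of the two bounds $A2^{j(s-s_1)}$ and $B2^{-j(s_2-s)}$). In (ii) the forward estimate via almost-orthogonality and $\ell^r$--$\ell^{r'}$ H\"older is correct, but your near-optimizer $\psi=\sum_j c_j2^{js}\delj u/\|\delj u\|_{L^2}$ has a wrinkle: $\langle u,\delj u\rangle$ is \emph{not} $\|\delj u\|_{L^2}^2$, since the neighboring blocks contribute cross terms $\langle \dot{\Delta}_{j\pm1}u,\delj u\rangle$ that need not be positive (one can rescue this using $0\le\phi\le1$, which gives $\langle u,\delj u\rangle=(2\pi)^{-3}\int\phi(2^{-j}\xi)|\hat{u}(\xi)|^2d\xi\ge\|\delj u\|_{L^2}^2$ for real $u$, but you did not invoke that). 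The cleaner route is to pick near-maximizers $g_j\in\mathcal{S}$ of $g\mapsto\langle\delj u,g\rangle$ over $\|g\|_{L^2}\le1$ and test against $\psi=\sum_{j\in F}c_j2^{js}\delj g_j$: self-adjointness of $\delj$ gives $\langle u,\delj g_j\rangle=\langle\delj u,g_j\rangle$ exactly, with no cross terms, and $\|\psi\|_{\dot{B}^{-s}_{2,r'}}\lesssim\|(c_j)\|_{\ell^{r'}}$ as before.

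The genuine gap is in (iv), and you flag it yourself without closing it: you never prove that $\sum_j v_j$ converges in $\mathcal{S}'$ or that $u_n\to u$ in $\mathcal{S}'$, which is precisely where the hypothesis $s<3/2$ (or $s=3/2$, $r=1$) is used. The missing ingredient is one more application of Bernstein's inequality, this time $L^2\to L^\infty$: since $\widehat{\delj u_n}$ and $\widehat{v_j}$ are supported in $2^j\mathcal{C}$, one has $\|\delj u_n\|_{L^\infty}\lesssim 2^{3j/2}\|\delj u_n\|_{L^2}\lesssim 2^{j(3/2-s)}\sup_n\|u_n\|_{\Bi{s}}$, and the same bound for $v_j$ by weak lower semicontinuity. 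For $s<3/2$ this is geometrically summable over $j\le0$ \emph{uniformly in} $n$, so the low-frequency parts of $\sum_j v_j$ converge in $L^\infty$ and the low-frequency tails of $u_n$ are uniformly negligible; for $j\ge0$ one pairs against a Schwartz $\phi$ and uses $|\langle\delj w,\phi\rangle|\le\|\delj w\|_{L^2}\|\widetilde{\Delta}_j\phi\|_{L^2}\lesssim_K 2^{-jK}$ (with $\widetilde{\Delta}_j$ a fattened block equal to the identity on frequencies in $2^j\mathcal{C}$), again uniformly in $n$. Splitting into uniformly small low tail, finitely many blocks converging weakly, and uniformly small high tail yields both that $u=\sum_j v_j$ is a genuine tempered distribution and that $u_n\to u$ in $\mathcal{S}'$, with the Fatou norm bound from weak lower semicontinuity plus Fatou in $\ell^r$, as you say. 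Note, however, that at the endpoint $s=3/2$, $r=1$ the uniform-tail argument fails ($\ell^1$-bounded sequences can let mass escape to $j\to-\infty$; e.g.\ $u_n=\chi(\cdot/n)$ is bounded in $\Bo{3/2}$, its blocks tend to $0$ weakly, yet $u_n\to1$ in $\mathcal{S}'$), so there the convergence can only be asserted modulo polynomials --- consistent with the paper's definition of these spaces as subsets of $\mathcal{S}'/\mathcal{P}$ --- or one must argue separately, e.g.\ via weak-$*$ compactness in $L^\infty$ through the embedding $\Bo{3/2}\hookrightarrow L^\infty$.
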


The following embedding property holds. (Cf. \cite[Proposition 2.20, Propositon 2.39 and Theorem 2.40]{MR2768550}.) 
\begin{lem} \label{em}
    For any $1\leq p\leq2$, the space $L^p$ is continuously embedded in the space $\Bi{-3(1/p-1/2)}$. For any $2\leq q\leq \infty$, the space $\Bo{3(1/2-1/q)}$ is continuously embedded in the space $L^q$.
\end{lem}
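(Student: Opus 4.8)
The plan is to derive both inclusions from Bernstein's inequality combined with the uniform $L^p$-boundedness of the Littlewood--Paley projections $\delj$. Recall that, writing $h=\mathcal{F}^{-1}\phi\in\mathcal{S}$, we have $\delj u=(2^{3j}h(2^j\cdot))*u$, so that Young's inequality gives $\|\delj u\|_{L^p}\lesssim\|u\|_{L^p}$ with a constant independent of $j$. Moreover, since $\widehat{\delj u}$ is supported in the ball $\{|\xi|\lesssim 2^j\}$, Bernstein's inequality yields
\begin{align*}
    \|\delj u\|_{L^b}\lesssim 2^{3j\left(\frac{1}{a}-\frac{1}{b}\right)}\|\delj u\|_{L^a}\ \ \ \mathrm{whenever}\ \ \ 1\le a\le b\le\infty.
\end{align*}
These two facts are the only analytic inputs I would use.

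For the first embedding, fix $1\le p\le 2$ and set $s=-3(1/p-1/2)$. Applying Bernstein with $a=p$, $b=2$ and then the uniform boundedness gives, for each $j\in\mathbb{Z}$,
\begin{align*}
    2^{js}\|\delj u\|_{L^2}=2^{-3\left(\frac{1}{p}-\frac{1}{2}\right)j}\|\delj u\|_{L^2}\lesssim\|\delj u\|_{L^p}\lesssim\|u\|_{L^p}.
\end{align*}
Taking the supremum over $j\in\mathbb{Z}$ produces exactly the $\Bi{s}$-norm, whence $\|u\|_{\Bi{-3(1/p-1/2)}}\lesssim\|u\|_{L^p}$.

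For the second embedding, fix $2\le q\le\infty$ and set $s=3(1/2-1/q)\ge 0$. Writing $u=\sum_{j\in\mathbb{Z}}\delj u$ and using the triangle inequality in $L^q$ followed by Bernstein with $a=2$, $b=q$,
\begin{align*}
    \|u\|_{L^q}\le\sum_{j\in\mathbb{Z}}\|\delj u\|_{L^q}\lesssim\sum_{j\in\mathbb{Z}}2^{3\left(\frac{1}{2}-\frac{1}{q}\right)j}\|\delj u\|_{L^2}=\|u\|_{\Bo{s}},
\end{align*}
which is the claimed inclusion $\Bo{3(1/2-1/q)}\hookrightarrow L^q$.

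The delicate point, and the step I expect to require the most care, is the homogeneous low-frequency behaviour rather than the Bernstein estimates themselves. In the second embedding one must justify that the series $\sum_j\delj u$ converges in $L^q$ to a genuine representative of the class $u\in\mathcal{S}'/\mathcal{P}$: the summability of $\sum_j 2^{sj}\|\delj u\|_{L^2}$ controls the high frequencies, while the condition $s\ge 0$ (equivalently $q\ge 2$) is precisely what makes the low-frequency part admit a well-defined $L^q$ realization, and for the endpoint $q=\infty$ one verifies that the partial sums are uniformly bounded and Cauchy in $L^\infty$. I would settle these realization issues following the construction of the homogeneous Besov spaces in \cite{MR2768550}, after which both inclusions follow at once from the displayed inequalities.
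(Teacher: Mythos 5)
Your argument is correct and is essentially the paper's own: the paper proves Lemma \ref{em} only by citing \cite[Proposition 2.20, Proposition 2.39 and Theorem 2.40]{MR2768550}, and the proofs of those cited facts are exactly your two ingredients, namely the uniform bound $\|\delj u\|_{L^p}\lesssim\|u\|_{L^p}$ from Young's inequality and Bernstein's inequality on the annulus, together with the low-frequency realization discussion that you likewise defer to that reference. Your proposal simply makes the cited standard argument explicit, so no changes are needed.
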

The following commutator estimate is the special case of \cite[Lemma 2.100 and Remark 2.102]{MR2768550}.
\begin{lem} \label{commu}
    Let $-3/2<s<5/2$, $1\leq r\leq \infty$. Then, we have
    \begin{align*}
        \left\|\left(2^{js}\|[\delj, v\partial_k]u\|_{L^2}\right)_{j\in\mathbb{Z}}\right\|_{\ell^{r}(\mathbb{Z})} \lesssim_{s}\|\nabla v\|_{\dot{B}^{\frac{3}{2}}_{2,1}} \|u\|_{\dot{B}^{s}_{2,r}}
    \end{align*}
    for any $1\leq k \leq 3$, where $u$ and $v$ are scalar functions.
\end{lem}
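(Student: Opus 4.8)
The plan is to localize the commutator and run Bony's paraproduct calculus, following the scheme of the cited \cite[Lemma 2.100]{MR2768550}. First I would observe that since the Fourier multiplier $\delj$ commutes with $\partial_k$, the commutator collapses to $[\delj, v\partial_k]u = [\delj, v]\partial_k u = \delj(v\,\partial_k u) - v\,\delj\partial_k u$, so it suffices to estimate this transport-type commutator. I would then insert Bony's decomposition $v\,\partial_k u = T_v(\partial_k u) + T_{\partial_k u}v + R(v,\partial_k u)$, together with the analogous decomposition of $v\,\delj\partial_k u$, and regroup the difference into three pieces: the paraproduct commutator $[\delj, T_v]\partial_k u$, the leftover paraproduct $\delj T_{\partial_k u}v - T_{\delj\partial_k u}v$, and the remainder $\delj R(v,\partial_k u) - R(v,\delj\partial_k u)$.

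The core of the argument is the first piece, where the gain of one derivative on $v$ occurs. Writing $\delj$ as convolution against $h_j = 2^{3j}h(2^j\cdot)$ and Taylor-expanding the low-frequency factor $S_{j'-1}v$ to first order, I would represent $[\delj, S_{j'-1}v]\,\deljj\partial_k u$ as an integral carrying a factor $(y-x)$, which I absorb into $h_j$ to produce the prefactor $2^{-j}\|\nabla S_{j'-1}v\|_{L^\infty}$. Since the paraproduct forces $|j'-j|$ bounded, the loss $2^{-j}$ is compensated by $\|\deljj\partial_k u\|_{L^2}\sim 2^{j'}\|\deljj u\|_{L^2}$, leaving $\|\nabla v\|_{L^\infty}\|\deljj u\|_{L^2}$; a discrete Young inequality in $\ell^r$ together with the embedding $\dot{B}^{3/2}_{2,1}\hookrightarrow L^\infty$ from Lemma \ref{em} then bounds this piece by $\|\nabla v\|_{\dot{B}^{3/2}_{2,1}}\|u\|_{\dot{B}^{s}_{2,r}}$, with no restriction on $s$.

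For the remaining two pieces I would use only Bernstein's inequality and frequency-support bookkeeping. In $\delj T_{\partial_k u}v$ the low-frequency factor is estimated in $L^\infty$ by $\|S_{j'-1}\partial_k u\|_{L^\infty}\lesssim \sum_{j''\le j'-2}2^{(5/2)j''}\|\dot{\Delta}_{j''}u\|_{L^2}$; writing the summand as $2^{(5/2-s)j''}\bigl(2^{sj''}\|\dot{\Delta}_{j''}u\|_{L^2}\bigr)$, this geometric sum is dominated by the top frequency precisely when $s<5/2$, which is the source of the upper endpoint. Dually, in $\delj R(v,\partial_k u)$ the summation over high frequencies $j'\gtrsim j$ produces, after Bernstein, a kernel $2^{(s+3/2)(j-j')}$; this is summable, hence yields an $\ell^1\ast\ell^r\to\ell^r$ bound pairing $\|\nabla v\|_{\dot{B}^{3/2}_{2,1}}$ against $\|u\|_{\dot{B}^{s}_{2,r}}$, exactly when $s>-3/2$, which is the source of the lower endpoint. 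The two correction terms $T_{\delj\partial_k u}v$ and $R(v,\delj\partial_k u)$ involve, respectively, the high-frequency tail $\sum_{j'\gtrsim j}\deljj v$ and a bounded band $j'\sim j$, and both are controlled by the same Bernstein estimates using the $\ell^1$ weight in $\|\nabla v\|_{\dot{B}^{3/2}_{2,1}}$, imposing no further constraint on $s$.

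I expect the main obstacle to be organizational rather than analytic: keeping careful track of the dyadic supports so that each regrouped term is genuinely localized near $2^j$ where Bernstein and the $2^{-j}$ commutator gain apply, and verifying that the two geometric summations are controlled by the $\ell^1$ norm $\|\nabla v\|_{\dot{B}^{3/2}_{2,1}}$ on one factor and the $\ell^r$ norm $\|u\|_{\dot{B}^{s}_{2,r}}$ on the other, which is what pins down the open range $-3/2<s<5/2$. Since the statement is the $p=2$, $d=3$ specialization of \cite[Lemma 2.100, Remark 2.102]{MR2768550}, an alternative is simply to quote that result; the paraproduct argument above is what makes the role of the two endpoints transparent.
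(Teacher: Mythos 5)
Your proposal is correct, but note that the paper itself contains no argument for Lemma \ref{commu}: it simply invokes \cite[Lemma 2.100 and Remark 2.102]{MR2768550}, of which the statement is the $p=2$, $d=3$ special case. What you have written is, in substance, a reconstruction of the proof of that cited result. The reduction $[\delj, v\partial_k]u=[\delj,v]\partial_k u$ is legitimate since $\delj$ and $\partial_k$ are both Fourier multipliers; the splitting into the paraproduct commutator $[\delj,T_v]\partial_k u$, the reversed paraproduct, and the remainder is exactly Bony's scheme; the kernel/Taylor argument gives the $2^{-j}\|\nabla S_{j'-1}v\|_{L^\infty}$ gain, which after $\|\deljj\partial_k u\|_{L^2}\sim 2^{j'}\|\deljj u\|_{L^2}$ and the embedding $\Bo{\frac{3}{2}}\hookrightarrow L^\infty$ (Lemma \ref{em}) yields a bound with no constraint on $s$; and your endpoint bookkeeping is the right one: $s<5/2$ enters through $\|S_{j'-1}\partial_k u\|_{L^\infty}\lesssim\sum_{j''\le j'-2}2^{(\frac{5}{2}-s)j''}\bigl(2^{sj''}\|\dot{\Delta}_{j''}u\|_{L^2}\bigr)$, and $s>-3/2$ through the summability of the kernel $2^{(s+\frac{3}{2})(j-j')}$ over $j'\gtrsim j$ in the remainder, with Young's inequality $\ell^1\ast\ell^r\to\ell^r$ pairing the $\dot{B}^{3/2}_{2,1}$ norm of $\nabla v$ against the $\dot{B}^{s}_{2,r}$ norm of $u$. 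The only real difference between your route and the paper's is economy: the paper buys the lemma with a citation, whereas your argument makes visible where each endpoint of the range $-3/2<s<5/2$ and the $\ell^1$ summation on $\nabla v$ are actually used; as you yourself observe, simply quoting \cite{MR2768550} is also sufficient, and that is precisely what the paper does.
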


We state the bilinear estimate in the homogeneous Besov space. The following lemma is a direct consequence of \cite[Theorem 2.47 and Theorem 2.52]{MR2768550}.
\begin{lem} \label{bilinearlemma}
    Let $s_1,s_2\in\mathbb{R}$ satisfy $s_1,s_2<3/2$ and $s_1+s_2>0$. Let $1\leq r_1,r_2 \leq \infty$ satisfy $1/r_1+1/r_2=1/r$. Then, we have
    \begin{align*}
        \|uv\|_{\dot{B}_{2,r}^{s_1+s_2-\frac{3}{2}}} \lesssim_{s_1,s_2} \|u\|_{\dot{B}_{2,r_1}^{s_1}}\|v\|_{\dot{B}_{2,r_2}^{s_2}}.
    \end{align*}
    In the cases $s_1\leq 3/2$, $s_2 <3/2$ with $s_1+s_2\geq 0$, we have 
    \begin{align*}
        \|uv\|_{\dot{B}_{2,\infty}^{s_1+s_2-\frac{3}{2}}} \lesssim \|u\|_{\dot{B}_{2,1}^{s_1}}\|v\|_{\dot{B}_{2,\infty}^{s_2}}.
    \end{align*}
\end{lem}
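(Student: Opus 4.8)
The plan is to prove both inequalities by Bony's paraproduct decomposition together with Bernstein's inequality in $\mathbb{R}^3$, which is the source of the $3/2$-shift in the target index. Write
\[
    uv = T_uv + T_vu + R(u,v),\qquad
    T_uv=\sum_{j}S_{j-1}u\,\delj v,\quad
    R(u,v)=\sum_{j}\delj u\sum_{|j'-j|\leq 1}\deljj v,
\]
where $S_{j-1}u=\sum_{j'\leq j-2}\deljj u$. The point is that $S_{j-1}u\,\delj v$ is spectrally localized in an annulus of size $2^j$, while $\delj u\,\deljj v$ (with $|j'-j|\leq1$) is localized in a ball of radius $\lesssim 2^{j+1}$; these localizations dictate which dyadic blocks $\delk$ see each term. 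Throughout I will use Bernstein, namely $\|\delj g\|_{L^\infty}\lesssim 2^{3j/2}\|\delj g\|_{L^2}$ and $\|\delk g\|_{L^2}\lesssim 2^{3k/2}\|g\|_{L^1}$ in dimension three, together with the abbreviations $c_j=2^{s_1j}\|\delj u\|_{L^2}\in\ell^{r_1}$ and $b_j=2^{s_2j}\|\delj v\|_{L^2}\in\ell^{r_2}$, whose norms are $\|u\|_{\dot B^{s_1}_{2,r_1}}$ and $\|v\|_{\dot B^{s_2}_{2,r_2}}$.

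For the paraproduct $T_uv$, Bernstein gives $\|S_{j-1}u\|_{L^\infty}\lesssim \sum_{j'\leq j-2}2^{3j'/2}\|\deljj u\|_{L^2}=\sum_{j'\leq j-2}2^{(3/2-s_1)j'}c_{j'}$. Since $s_1<3/2$, the exponent $3/2-s_1$ is positive, so the geometric kernel is $\ell^1$ in $j'-j$ and Young's convolution inequality yields $\|S_{j-1}u\|_{L^\infty}\lesssim 2^{(3/2-s_1)j}a_j$ with $\|a_j\|_{\ell^{r_1}}\lesssim\|u\|_{\dot B^{s_1}_{2,r_1}}$. Because only $|j-k|\lesssim 1$ contribute to $\delk(T_uv)$, one finds
\[
    2^{(s_1+s_2-3/2)k}\|\delk(T_uv)\|_{L^2}\lesssim \sum_{|j-k|\leq N}2^{(s_1+s_2-3/2)(k-j)}a_jb_j\lesssim \sum_{|j-k|\leq N}a_jb_j,
\]
and taking $\ell^r_k$, then Hölder with $1/r=1/r_1+1/r_2$, gives $\|T_uv\|_{\dot B^{s_1+s_2-3/2}_{2,r}}\lesssim\|u\|_{\dot B^{s_1}_{2,r_1}}\|v\|_{\dot B^{s_2}_{2,r_2}}$. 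The term $T_vu$ is identical with the roles of $(s_1,r_1)$ and $(s_2,r_2)$ exchanged, using $s_2<3/2$.

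For the remainder, $\delk R(u,v)$ receives only the contributions with $j\geq k-N_0$, and Bernstein from $L^1$ to $L^2$ together with $\|\delj u\,\deljj v\|_{L^1}\leq\|\delj u\|_{L^2}\|\deljj v\|_{L^2}$ gives
\[
    2^{(s_1+s_2-3/2)k}\|\delk R(u,v)\|_{L^2}\lesssim \sum_{j\geq k-N_0}2^{(s_1+s_2)(k-j)}c_jb_j.
\]
Here $s_1+s_2>0$ makes the kernel $2^{(s_1+s_2)(k-j)}\mathbf 1_{\{j\geq k-N_0\}}$ summable in $j$, so Young and Hölder again close the estimate in $\dot B^{s_1+s_2-3/2}_{2,r}$. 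Combining the three pieces proves the first inequality. For the endpoint inequality one repeats the same computation with $r_1=1$, $r_2=\infty$: at $s_1=3/2$ the bound on $\|S_{j-1}u\|_{L^\infty}$ no longer decays, but $\sum_{j'\leq j-2}c_{j'}\leq\|u\|_{\dot B^{3/2}_{2,1}}$ stays bounded uniformly in $j$ precisely because $c\in\ell^1$; and at $s_1+s_2=0$ the remainder kernel degenerates to a constant, yet $\sum_{j\geq k-N_0}c_jb_j\leq\|v\|_{\dot B^{s_2}_{2,\infty}}\|u\|_{\dot B^{s_1}_{2,1}}$ remains finite for the same reason. The main obstacle is therefore not any single estimate but the endpoint bookkeeping: verifying that the $\ell^1$-summability of $u$ exactly compensates for the loss of the strict inequalities $s_1<3/2$ and $s_1+s_2>0$ (which otherwise provide the decaying convolution kernels). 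Alternatively, one may simply invoke the continuity of the paraproduct and remainder operators from \cite[Theorem 2.47 and Theorem 2.52]{MR2768550} and assemble the three contributions as above.
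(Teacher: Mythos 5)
Your proof is correct and takes essentially the same route as the paper: the paper disposes of this lemma by citing the paraproduct and remainder continuity theorems (Theorems 2.47 and 2.52 of the reference \cite{MR2768550}), and your argument is exactly the standard proof of those theorems---Bony decomposition, Bernstein's inequality, Young's convolution inequality and H\"older---including the correct observation that the $\ell^1$-summability of $2^{3j/2}\|\delj u\|_{L^2}$ is what rescues the endpoint cases $s_1=3/2$ and $s_1+s_2=0$ where the geometric convolution kernels stop decaying. The only difference is that you write out in full what the paper delegates to the citation, so no comparison of substance is needed.
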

We use the following lemmas regarding the composition of functions. (Cf. \cite[Theorem 2.61]{MR2768550}.)
\begin{lem} \label{comp1}
    Let $1\leq r\leq \infty$, $c>0$, $\Phi\in C^{\infty}(\mathbb{R})$ with $\Phi(0)=0$ and $\sigma\in\dot{B}^{s}_{2,r}\cap L^{\infty}$ with $s<2/3$, $1 \leq\infty$ or $s=3/2$, $r=1$ satisfies
    \begin{align*}
        \|\sigma\|_{L^\infty}\leq c.
    \end{align*}
    Then, we have
    \begin{align*}
        \|\Phi(\sigma)\|_{\dot{B}_{2,r}^s}\lesssim_{c,\Phi} \|\sigma\|_{\dot{B}_{2,r}^s}.
    \end{align*}
\end{lem}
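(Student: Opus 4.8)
The plan is to reduce the estimate to the Littlewood--Paley machinery via a telescoping decomposition of $\Phi(\sigma)$, and then to balance a low-frequency regularity gain against a high-frequency decay gain. Writing $S_j:=\sum_{j''<j}\dot\Delta_{j''}$ for the low-frequency cut-off and using $\Phi(0)=0$ together with $S_j\sigma\to 0$ and $S_j\sigma\to\sigma$ as $j\to\mp\infty$ (this convergence, and the realization of $\Br{s}$ inside $\mathcal{S}'$, is exactly where the endpoint hypothesis $s<3/2$, or $s=3/2$ with $r=1$, enters), I would write
\begin{align*}
    \Phi(\sigma)=\sum_{j\in\mathbb{Z}}u_j,\qquad u_j:=\Phi(S_{j+1}\sigma)-\Phi(S_j\sigma),\qquad m_j:=\int_0^1 \Phi'\big(S_j\sigma+t\,\delj\sigma\big)\,dt,
\end{align*}
so that the mean value theorem gives the factorization $u_j=(\delj\sigma)\,m_j$. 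Since the Littlewood--Paley projectors are bounded on $L^\infty$ uniformly in $j$, the argument of $\Phi'$ stays in the fixed compact interval $[-Cc,Cc]$; hence $\|m_j\|_{L^\infty}\lesssim_{c,\Phi}1$ and, in particular, $\|u_j\|_{L^2}\lesssim_{c,\Phi}\|\delj\sigma\|_{L^2}$.

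The key point is that, although $m_j$ is \emph{not} spectrally localized (a smooth nonlinear function of a band-limited function has unbounded spectrum), one can still extract decay from $u_j$ at frequencies far from $2^j$ by trading it for regularity. The inner function $w:=S_j\sigma+t\,\delj\sigma$ has spectrum in a ball of radius $\sim 2^j$, so Bernstein's inequality gives $\|\nabla^\ell w\|_{L^\infty}\lesssim 2^{j\ell}\|\sigma\|_{L^\infty}$; inserting this into the Fa\`a di Bruno formula for $\nabla^N m_j$ and combining with Bernstein's inequality for $\delj\sigma$ yields the central estimate
\begin{align*}
    \|\nabla^N u_j\|_{L^2}\lesssim_{c,\Phi,N}2^{jN}\|\delj\sigma\|_{L^2}.
\end{align*}
Accordingly, I would bound the outer projection $\|\deljj u_j\|_{L^2}$ in two regimes: for $j'\geq j$, Bernstein gives $\|\deljj u_j\|_{L^2}\lesssim 2^{-j'N}\|\nabla^N u_j\|_{L^2}\lesssim 2^{(j-j')N}\|\delj\sigma\|_{L^2}$, while for $j'<j$ I simply use the crude bound $\|\deljj u_j\|_{L^2}\leq\|u_j\|_{L^2}\lesssim\|\delj\sigma\|_{L^2}$.

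It then remains to sum. Setting $b_j:=2^{js}\|\delj\sigma\|_{L^2}$ and inserting the two bounds into $\deljj\Phi(\sigma)=\sum_j\deljj u_j$ gives
\begin{align*}
    2^{j's}\|\deljj\Phi(\sigma)\|_{L^2}\lesssim_{c,\Phi,N}\sum_{j\leq j'}2^{(j'-j)(s-N)}b_j+\sum_{j>j'}2^{-(j-j')s}b_j.
\end{align*}
Choosing $N>s$, the first kernel is summable because $s-N<0$ and the second because $s>0$; both are convolutions with an $\ell^1(\mathbb{Z})$ sequence, so Young's inequality for $\ell^r$ convolutions yields $\|\Phi(\sigma)\|_{\Br{s}}=\|(2^{j's}\|\deljj\Phi(\sigma)\|_{L^2})_{j'}\|_{\ell^r}\lesssim_{c,\Phi,s}\|\sigma\|_{\Br{s}}$, as claimed.

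I expect the main obstacle to be conceptual rather than computational: the failure of spectral localization for $m_j$ is precisely what forces the two-regime argument (regularity gain for $j'\geq j$, crude bound for $j'<j$) in place of a naive support or product estimate, and the positivity $s>0$ built into the stated range is exactly what makes the high-frequency tail $\sum_{j>j'}$ summable. The upper endpoint $s<3/2$ (resp. $s=3/2$, $r=1$) plays no role in the dyadic estimates themselves; it is needed only to guarantee that $\Br{s}$ is realized inside $\mathcal{S}'$ and that the telescoping series converges to $\Phi(\sigma)$ modulo polynomials, which I would import from the realization theory underlying \cite[Theorem 2.61]{MR2768550}.
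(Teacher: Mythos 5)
Your proof is correct, and it coincides in substance with the paper's: the paper gives no proof of this lemma but imports it verbatim from \cite[Theorem 2.61]{MR2768550}, and your telescoping decomposition $\Phi(\sigma)=\sum_j\bigl(\Phi(S_{j+1}\sigma)-\Phi(S_j\sigma)\bigr)$ with the factorization $u_j=(\delj\sigma)m_j$, the Bernstein/Fa\`a di Bruno bound $\|\nabla^N u_j\|_{L^2}\lesssim 2^{jN}\|\delj\sigma\|_{L^2}$, and the two-regime summation is exactly Meyer's first-linearization argument underlying that cited theorem. A useful byproduct of writing it out is that your proof makes explicit the hypotheses the printed statement garbles: one needs $0<s<3/2$, $1\leq r\leq\infty$ (or $s=3/2$, $r=1$), the positivity $s>0$ being what makes the tail $\sum_{j>j'}2^{-(j-j')s}b_j$ summable, and the upper restriction entering only through the realization of $\dot{B}^{s}_{2,r}$ in $\mathcal{S}'$ and the convergence of the telescoping series.
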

By applying Lemma \ref{comp1} for the difference of composite functions
\begin{align*}
        \Phi(\sigma)-\Phi(\eta)=\int_0^1 (\Phi'(\eta +t(\sigma-\eta))-\Phi'(0))(\sigma-\eta)dt + \Phi'(0)(\sigma-\eta),
\end{align*}
we then obtain the following lemma. 
\begin{lem} \label{compos}
    Let $1\leq r\leq \infty$, $c>0$, $\Phi\in C^{\infty}(\mathbb{R})$ and $\sigma,\eta\in \dot{B}^{s}_{2,r}\cap \dot{B}^{3/2}_{2,1}$ with $-3/2\leq s<3/2$, $1\leq r\leq \infty$ or $s=3/2$, $r=1$ satisfy
    \begin{align*}
        \|(\sigma,\eta)\|_{L^\infty} \leq c.
    \end{align*}
    Then, we have
        \begin{align} \label{2compesti}
            \|\Phi(\sigma)-\Phi(\eta)\|_{\dot{B}^{s}_{2,r}} \lesssim_{c,\Phi} (1+\|(\sigma,\eta)\|_{\Bo{\frac{3}{2}}}) \|\sigma-\eta\|_{\dot{B}^{s}_{2,r}}.
        \end{align}
\end{lem}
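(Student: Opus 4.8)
The plan is to read off the estimate from the integral identity for the difference displayed just before the lemma, namely
\begin{align*}
    \Phi(\sigma)-\Phi(\eta)=\int_0^1 \bigl(\Phi'(\eta +t(\sigma-\eta))-\Phi'(0)\bigr)(\sigma-\eta)\,dt + \Phi'(0)(\sigma-\eta),
\end{align*}
which is the fundamental theorem of calculus applied to $t\mapsto\Phi(\eta+t(\sigma-\eta))$ after subtracting the constant $\Phi'(0)$. The last term is harmless: multiplication by the scalar $\Phi'(0)$ merely scales every Besov norm, so $\|\Phi'(0)(\sigma-\eta)\|_{\Br{s}}=|\Phi'(0)|\,\|\sigma-\eta\|_{\Br{s}}\lesssim_\Phi\|\sigma-\eta\|_{\Br{s}}$, which supplies the ``$1$'' in the prefactor $1+\|(\sigma,\eta)\|_{\Bo{\frac{3}{2}}}$.

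For the integral term I would first bring the $\Br{s}$ norm inside the $t$-integral by Minkowski's inequality, reducing matters to a bound, uniform in $t\in[0,1]$, for the product
\begin{align*}
    \bigl\|G_t\,(\sigma-\eta)\bigr\|_{\Br{s}},\qquad G_t:=\Psi\bigl(\eta+t(\sigma-\eta)\bigr),\quad \Psi:=\Phi'-\Phi'(0)\in C^\infty(\R),\ \Psi(0)=0.
\end{align*}
The two factors are then separated by the product (multiplier) estimate
\begin{align*}
    \|G_t\,(\sigma-\eta)\|_{\Br{s}}\lesssim \|G_t\|_{\Bo{\frac{3}{2}}}\,\|\sigma-\eta\|_{\Br{s}},
\end{align*}
which expresses that $\Bo{\frac{3}{2}}$ (continuously embedded in $L^\infty$ by Lemma \ref{em}) acts as a multiplier on $\Br{s}$ for $-3/2\le s\le 3/2$, and is the paraproduct content behind Lemma \ref{bilinearlemma}. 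For the composition factor I would invoke Lemma \ref{comp1} with the function $\Psi$ at the index $(3/2,1)$: since $\|\eta+t(\sigma-\eta)\|_{L^\infty}\le(1-t)\|\eta\|_{L^\infty}+t\|\sigma\|_{L^\infty}\le c$ for all $t$, Lemma \ref{comp1} yields
\begin{align*}
    \|G_t\|_{\Bo{\frac{3}{2}}}\lesssim_{c,\Phi}\|\eta+t(\sigma-\eta)\|_{\Bo{\frac{3}{2}}}\le\|(\sigma,\eta)\|_{\Bo{\frac{3}{2}}},
\end{align*}
uniformly in $t$, using linearity in $t$ and the triangle inequality in the last step. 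Substituting these two bounds, integrating the now $t$-independent right-hand side over $[0,1]$, and recombining with the $\Phi'(0)$ contribution gives exactly (\ref{2compesti}).

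The step I expect to be delicate is the product estimate. Its whole point is that the full $\ell^r$ summability must be retained by the rough factor $\sigma-\eta$ while the derivative count is carried by the critical factor $G_t\in\Bo{\frac{3}{2}}$; this is precisely what the low--high paraproduct $T_{G_t}(\sigma-\eta)$ achieves, being bounded by $\|G_t\|_{L^\infty}\|\sigma-\eta\|_{\Br{s}}$ with no loss of the index $r$. The second inequality of Lemma \ref{bilinearlemma} is exactly this estimate in the prototype case $r=\infty$ (it alone admits the endpoint smoothness $s_1=3/2$), and the general $1\le r\le\infty$ case follows by running the same Bony decomposition. Accordingly, the interior range $-3/2<s<3/2$ is immediate, while the two endpoints demand a little extra bookkeeping: at $s=3/2$ one must take $r=1$ and use both the algebra structure of $\Bo{\frac{3}{2}}$ and the $s=3/2,\,r=1$ case of Lemma \ref{comp1}, whereas at $s=-3/2$ the remainder term of the Bony decomposition is borderline and is closed only by exploiting the $\ell^1$ summability of $G_t\in\Bo{\frac{3}{2}}$.
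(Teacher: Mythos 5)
Your proposal follows essentially the same route as the paper: the paper's entire proof consists of the displayed integral identity together with the instruction to apply Lemma \ref{comp1}, leaving implicit precisely the steps you spell out (Minkowski's inequality, the fact that $\Bo{\frac{3}{2}}$ acts as a multiplier on $\dot{B}^{s}_{2,r}$ via Lemma \ref{bilinearlemma}, Lemma \ref{comp1} applied to $\Phi'-\Phi'(0)$ with the convex-combination bounds $\|(1-t)\eta+t\sigma\|_{L^\infty}\leq c$ and $\|(1-t)\eta+t\sigma\|_{\Bo{\frac{3}{2}}}\lesssim\|(\sigma,\eta)\|_{\Bo{\frac{3}{2}}}$). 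Your write-up is correct and matches the intended argument, and if anything is more careful than the paper about the endpoint cases.
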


\section{Existence and stability of time-periodic solution}

This section is devoted to the proof of Theorem \ref{timethm}. We shall investigate the initial value problem of (\ref{eq}):
\begin{equation} \label{inieq}
    \left\{
    \begin{aligned}
        &\partial_t \rho + \dive(\rho v)=0,\\
        &\partial_t v + v\cdot \nabla v = \frac{\mu}{\rho}\Delta v + \frac{\mu+\mu'}{\rho}\nabla\dive\,v-\frac{\nabla P}{\rho} + f,\\
        &\partial_t \theta + v\cdot\nabla \theta + \frac{\theta \partial_\theta P}{c_V\rho}\dive\,v=\frac{\kappa}{c_V\rho } \Delta\theta + \frac{\Psi}{c_V\rho},\\
        & (\rho,v,\theta)|_{t=0}=(\rho_0,v_0,\theta_0)
    \end{aligned}
    \right.
\end{equation}
in $(0,T_1)\times \mathbb{R}^3$ for given $T_1>0$.

\subsection{A priori estimate}
We start with a priori estimates for an initial value problem (\ref{inieq}).
\begin{prop} \label{aprioriesti}
    Let $k\in\mathbb{Z}_{\geq3}$, $0<T_1 <\infty$ and  
    \begin{align*}
        f\in C_{T_1}(\Bi{-\frac{3}{2}}\cap\dot{H}^{k-1}).
    \end{align*}
    Suppose that the pair of functions $(\rho,v,\theta)$ with
    \begin{align} \label{regassump}
        (\rho-\rho_\infty,v,\theta-\theta_\infty)\in C_{T_1}(\Bi{\frac{1}{2}}\cap\dot{H}^k)
    \end{align}
    is a solution of $(\ref{inieq})$ in $(0,{T_1})\times\mathbb{R}^3$. Then, there exists a constant $\delta_1>0$ such that if
    \begin{align} \label{apriassump}
        \|(\rho-\rho_\infty,v,\theta-\theta_\infty)\|_{C_{T_1}(\Bi{\frac{1}{2}}\cap\dot{H}^k)}+ \|f\|_{C_{T_1}(\Bi{-\frac{3}{2}}\cap\dot{H}^{k-1})}\leq \delta_1,
    \end{align}
    then we obtain
    \begin{align*}
        &\|(\rho-\rho_\infty,v,\theta-\theta_\infty)\|_{C_{T_1}(\Bi{\frac{1}{2}}\cap\dot{H}^{k})} \\
        &\hspace{30pt}\lesssim \|(\rho_{0}-\rho_\infty,v_{0},\theta_{0}-\theta_\infty)\|_{\Bi{\frac{1}{2}}\cap\dot{H}^k}+
         \|f\|_{C_{T_1}(\Bi{-\frac{3}{2}}\cap\dot{H}^{k-1})}.
    \end{align*}
\end{prop}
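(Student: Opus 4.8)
The plan is to work with the transformed, symmetrized system (\ref{eequ}) in the variables $\sigma=\rho-\rho_\infty$, $m=\rho v$ and $\E$, in which every nonlinear term other than the force and the higher-order remainders $\mathcal{H}_1,\mathcal{H}_2$ appears as a divergence or a gradient. I would estimate the two pieces of the norm, the $\Bi{1/2}$ part and the $\dot{H}^{k}$ part, separately but in parallel, and then close the resulting coupled inequality using the smallness hypothesis (\ref{apriassump}) to absorb all nonlinear contributions.

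For the $\Bi{1/2}$ part, I would first localize (\ref{eequ}) in frequency by applying $\delj$ and run a modified energy estimate built from the functional (\ref{modiene}): multiplying the localized density, momentum and $\E$-equations by the corresponding combinations and integrating, the leading linear terms combine into a positive Lyapunov functional, while a Kawashima-type cross interaction supplies effective dissipation for the density component, which otherwise carries no diffusion. Integrating in time — equivalently, invoking the time-space estimate for the linearized semigroup $e^{tA}$ of (\ref{eequ}) quoted later in the paper — converts this into the statement that the time-integrated semigroup gains two derivatives and maps the force $f\in C_{T_1}(\Bi{-3/2})$ into $C_{T_1}(\Bi{1/2})$. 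The divergence-form nonlinearities are then controlled by Lemma \ref{bilinearlemma}; since $\Bi{1/2}$ sits at the endpoint $r=\infty$, I would use the second (endpoint) bilinear estimate, pairing the rough factor in $\Bi{1/2}$ against a smoother factor in $\Bo{3/2}$, and I would control the $\rho$-dependent coefficients entering $\mathcal{H}_1,\mathcal{H}_2$ through the composition estimates of Lemmas \ref{comp1} and \ref{compos}.

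For the $\dot{H}^{k}$ part I would apply the energy method directly to the quasilinear form (\ref{inieq}). Differentiating $k$ times, the transport terms $v\cdot\nabla$ produce commutators estimated by Lemma \ref{commu}, the $\rho$-dependent coefficients are handled by the composition estimates, and the remaining products by the algebra structure of the high-Sobolev space; the viscous and heat-conduction Laplacians furnish dissipation for $v$ and $\theta$, while an auxiliary interaction functional of the form $\langle\nabla\sigma,m\rangle$ recovers dissipation for the density at top order. Combining the two estimates gives a coupled inequality of the schematic type $\frac{d}{dt}\mathcal{N}+\mathcal{D}\lesssim \mathcal{N}^{1/2}\mathcal{D}$ for the nonlinear interaction, where $\mathcal{N}$ is equivalent to the full squared norm and $\mathcal{D}$ the total dissipation; the force is handled not by naive Gronwall but through the integrated semigroup bound established above, and under (\ref{apriassump}) the cubic term is absorbed, yielding the asserted uniform-in-time estimate.

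I expect the main obstacle to be the criticality of the convection terms in the $\Bi{1/2}$ estimate: because the solution decays only like $1/|x|$, the term $v\cdot\nabla v$ scales exactly like the viscous term $\mu\Delta v$, so the bilinear estimate must be applied at its endpoint with no regularity to spare, and the $r=\infty$ summability forbids the usual gain from summing dyadic blocks. Managing this together with the absence of intrinsic dissipation in the density equation — which forces reliance on the precise algebraic cancellations encoded in the modified energy functional (\ref{modiene}) — is the technical heart of the argument.
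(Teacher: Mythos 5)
Your proposal is correct in substance and shares the paper's overall architecture: pass to the variables $(\sigma,m,\E)$ via the modified energy functional so that every nonlinearity except the force and the higher-order remainders is a divergence or a gradient, prove the critical $\Bi{1/2}$ bound and the high-order Sobolev bound in parallel, and close using the smallness in (\ref{apriassump}). The genuine difference lies in how the critical Besov estimate is produced. You propose a frequency-localized, Danchin-style hybrid energy method: apply $\delj$ to (\ref{eequ}), form a block-wise Lyapunov functional from the symmetrized variables, and add a Kawashima-type cross term $\langle \delj m,\nabla\delj\sigma\rangle$ to generate dissipation for the density; integrating the damped ODEs block by block then yields the two-derivative gain $\Bi{-\frac{3}{2}}\to\Bi{\frac{1}{2}}$ at low frequencies. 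The paper instead never runs an energy estimate at low regularity: it invokes the known spectral resolution (\ref{projec}) of $\hat{A}(\xi)$ with the eigenvalue bounds (\ref{rambda1})--(\ref{rambda2}) (quoted from Matsumura--Nishida/Kawashima), packages them as the semigroup bounds of Lemma \ref{decayinfty} and the Duhamel estimate of Lemma \ref{phidecay}, and obtains Proposition \ref{low} by rerunning the proof of Proposition \ref{lowd} at $s=s_0=1/2$. The two routes are morally equivalent --- your cross term rebuilds by hand the dissipative structure the paper imports from the spectral analysis --- and yours is more self-contained, at the cost of carrying the Kawashima correction through the nonlinear estimates. Your high-order step is the paper's Proposition \ref{high} almost verbatim (their interaction functional $\langle\nabla^{\ell-1}v,\nabla^{\ell-1}\nabla\sigma\rangle$ is your $\langle\nabla\sigma,m\rangle$ at each derivative level, though they use the identity (\ref{alphaiden}) plus Sobolev embedding rather than Lemma \ref{commu}). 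Two cautions. First, the closure is cleaner as the paper states it --- Proposition \ref{low} carries the small prefactor $\delta_1$ in front of $\|(\sigma,v,\eta)\|_{C_{T_1}(\Bi{1/2}\cap\dot{H}^3)}$, while Proposition \ref{high} bounds the Sobolev norm by the Besov norm with no small factor, and the two are simply composed --- whereas your schematic unified inequality $\frac{d}{dt}\mathcal{N}+\mathcal{D}\lesssim\mathcal{N}^{1/2}\mathcal{D}$ is not literally satisfied by the Besov part, which is an integral (Duhamel-type) rather than differential estimate; also, at high order the bounded-in-time force \emph{is} handled by naive Gr\"{o}nwall, because the damping $cE_k$ is full there. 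Second, the clean statement that the time-integrated evolution maps $C_{T_1}(\Bi{-3/2})$ into $C_{T_1}(\Bi{1/2})$ holds only for the low-frequency part; at high frequencies the density component has no smoothing, the decay is merely $e^{-ct}$ with no derivative gain, and one must instead invoke $f\in\dot{H}^{k-1}$ and the $\dot{H}^k$ regularity of the solution --- a point your write-up glosses over but which is exactly why the hypotheses include these norms.
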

\begin{prop} \label{diffdecayapri}
    Let $k\in\mathbb{Z}_{\geq 5}$, $0<\ep<1/2$ and the pair of functions $(\rho_1,v_1,\theta_1)$, $(\rho_2,v_2,\theta_2)$ be solutions of $(\ref{inieq})$ in $(0,T_1)\times \mathbb{R}^3$ with initial data $(\rho_{0,1},v_{0,1},\theta_{0,1})$, $(\rho_{0,2},v_{0,2},\theta_{0,2})$, respectively. Suppose that the time-periodic force $f$ and the solutions $(\rho_1,v_1,\theta_1)$, $(\rho_2,v_2,\theta_2)$ satisfy $(\ref{regassump})$ and $(\ref{apriassump})$ with small $\delta_1>0$. Then, if the initial data satisfy
    \begin{align*}
        (\rho_{0,1}-\rho_{0,2},v_{0,1}-v_{0,2},\theta_{0,1}-\theta_{0,2})\in \Bi{s_0}
    \end{align*}
    for some $-3/2\leq s_0\leq 1/2$, then we have the time-weighted estimate
    \begin{align*}
        &\sup_{0\leq t\leq T_1}(1+t)^{\frac{s-s_0}{2}}\|(\rho_1-\rho_2,v_1-v_2,\theta_1-\theta_2)(t)\|_{\Bi{s}\cap \dot{H}^4}  \\
        &\hspace{100pt}\lesssim_s \|(\rho_{0,1}-\rho_{0,2},v_{0,1}-v_{0,2},\theta_{0,1}-\theta_{0,2})\|_{\Bi{s_0}\cap \dot{H}^{4}}
    \end{align*}
    for any $-3/2+\ep\leq s \leq 3/2-\ep$ with $s\geq s_0$.
\end{prop}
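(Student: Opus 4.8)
The plan is to pass to the transformed system (\ref{eequ}) and derive a closed time-weighted estimate for the difference of the two solutions by combining Duhamel's formula with the semigroup decay estimate and the bilinear/commutator estimates of Section 2. Writing $U_i=(\rho_i-\rho_\infty,m_i,\E_i)$ for the transformed unknowns associated with $(\rho_i,v_i,\theta_i)$ and $U=U_1-U_2$, I would first subtract the two copies of (\ref{eequ}). Since both solutions are driven by the \emph{same} force $f$, the linearized operator $A$ acts identically on each, so $U$ satisfies
\begin{align*}
    \partial_t U - A U = \Phi,
\end{align*}
where $\Phi$ collects the differences of all nonlinear terms together with the force contributions. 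The essential structural feature is that every summand of $\Phi$ is a product in which one factor is a component of $U$ (or a first derivative thereof) and the other factor is either a quantity built from $U_1$ or $U_2$—hence bounded by $\delta_1$ through (\ref{apriassump})—or the force $f$; in particular $f$ enters only through $(\rho_1-\rho_2)f$ and through $f\cdot(v_1-v_2)$, so it too is a product carrying a component of $U$. Because the $\Bi{1/2}\cap\dot{H}^k$ norms of $(\rho_i-\rho_\infty,v_i,\theta_i-\theta_\infty)$ and of the transformed variables are equivalent for small solutions (via Lemmas \ref{compos} and \ref{bilinearlemma}), it suffices to prove the stated bound for $U$.

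Next I would use Duhamel's formula,
\begin{align*}
    U(t) = e^{tA}U(0) + \int_0^t e^{(t-\tau)A}\Phi(\tau)\,d\tau,
\end{align*}
and estimate the two pieces separately. For the linear part I would apply the semigroup decay estimate (\ref{semidecay3}) to obtain $\|e^{tA}U(0)\|_{\Bi{s}}\lesssim (1+t)^{-(s-s_0)/2}\|U(0)\|_{\Bi{s_0}}$ for $s_0\leq s\leq 3/2-\ep$, which already produces the right-hand side of the proposition. For the Duhamel integral I would invoke the time-weighted estimate for the Duhamel integral recalled in the Introduction, which reduces matters to controlling $\sup_{t}(1+t)^{(s-s_0)/2}\|\Phi(t)\|_{\Bi{s-2}\cap\Bi{3/2}}$ uniformly over the band $s_1\leq s\leq 3/2-\ep$ with $s_1=\max\{s_0,0\}$.

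The core of the argument is then the nonlinear bound for $\Phi$. Setting
\begin{align*}
    M := \sup_{-3/2+\ep\leq s\leq 3/2-\ep,\ s\geq s_0}\ \sup_{0\leq t\leq T_1}(1+t)^{\frac{s-s_0}{2}}\|U(t)\|_{\Bi{s}\cap\dot{H}^4},
\end{align*}
I would estimate each product in $\Phi$ by Lemma \ref{bilinearlemma}, placing the $U$-factor in $\Bi{s}$ and the solution factor in $\Bi{1/2}$ (or exploiting the higher regularity $\dot{H}^k$, $k\geq 5$, through Lemma \ref{em} when the index $s$ is too negative for the condition $s_1+s_2>0$), treating the convection-type commutators by Lemma \ref{commu} and the composition remainders by Lemma \ref{compos}. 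Matching Besov indices, every term inherits the time-weight $(1+t)^{(s-s_0)/2}$ from its $U$-factor and a prefactor $\delta_1$ from the solution/force factor, yielding
\begin{align*}
    \sup_{s_1\leq s\leq 3/2-\ep}\sup_{t}(1+t)^{\frac{s-s_0}{2}}\|\Phi(t)\|_{\Bi{s-2}\cap\Bi{3/2}} \lesssim \delta_1\,M.
\end{align*}
Combined with the two Duhamel estimates this gives $M\lesssim \|U(0)\|_{\Bi{s_0}\cap\dot{H}^4} + \delta_1 M$, and choosing $\delta_1$ small absorbs the last term and closes the bound. The $\dot{H}^4$ component is handled in parallel by a time-weighted energy estimate on the differentiated difference equation, whose source terms are again products controlled by $M$ together with the low-regularity Besov decay already obtained.

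The main obstacle I anticipate is the uniformity in $s$ across the whole band $[-3/2+\ep,\,3/2-\ep]$: for $s$ near $-3/2$ the index condition $s_1+s_2>0$ in Lemma \ref{bilinearlemma} fails if both the $U$-factor and the solution factor sit at low regularity, so one must carefully allocate the solution's regularity—using its $\dot{H}^k$ bound and the endpoint $\|u\|_{\Bo{s_1}}\|v\|_{\Bi{s_2}}$ version of the bilinear estimate—to keep all products in $\Bi{s-2}\cap\Bi{3/2}$ with the correct time-weight. The other delicate point is coupling this low-frequency Besov decay with the high-regularity $\dot{H}^4$ energy estimate inside a single bootstrap, so that both close simultaneously under the one smallness constant $\delta_1$.
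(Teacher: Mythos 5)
Your overall architecture (pass to the transformed system, Duhamel's formula, the time-weighted Duhamel estimate of Lemma \ref{phidecay}, bilinear estimates producing a factor $\delta_1$, a smallness bootstrap, and a parallel time-weighted $\dot{H}^4$ energy estimate) coincides with the paper's, which proves the proposition by combining Propositions \ref{lowd} and \ref{highd}. However, there is a genuine gap at the crux of the argument: you feed \emph{all} of $\Phi$ --- including the force terms and the higher-order terms $H_2$ --- into Lemma \ref{phidecay}, which requires $\|\Phi(t)\|_{\Bi{s-2}\cap\Bi{3/2}}\lesssim \delta_1 M\,(1+t)^{-\frac{s-s_0}{2}}$ uniformly over $s_1\leq s\leq 3/2-\ep$. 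When $s_0\leq 0$ one has $s_1=0$, so this range contains $s<1/2$, where $s-2<-3/2$. No product of functions can be placed in $\Bi{\sigma}$ with $\sigma<-3/2$ by Lemma \ref{bilinearlemma}: its output index $s_1+s_2-3/2$ is constrained by $s_1+s_2>0$ (resp.\ $\geq 0$) to lie above (resp.\ at) $-3/2$, and this is not a matter of allocating regularity between the factors --- extra smoothness of the solution factor, $\dot{H}^k$ bounds, or the endpoint bilinear estimate cannot push a generic product below the index $-3/2$; only explicit derivative (divergence or gradient) structure can. Consequently your claimed bound fails for exactly the terms that lack that structure: $\tilde{\sigma}f$, the differences of $\rho f\cdot v$, and the $H_2$-type terms (differences of $\Psi$, $(\mathcal{A}v)\cdot v$, $Q_2\,\dive v$, $v\cdot\nabla Q_1$). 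Your closing paragraph anticipates an index problem near $s=-3/2$, but the fix you propose does not resolve it, and the problem already occurs for $s<1/2$ inside the band where Lemma \ref{phidecay} must be applied.

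This is precisely why the paper's two-tier decomposition --- and, behind it, the modified energy functional $\mathcal{E}$ --- is essential rather than cosmetic. The reformulation (\ref{energychap3}) is engineered so that every term of $(G,H_1)$ is a divergence or a gradient; then $\|\dive F\|_{\Bi{s-2}}\lesssim\|F\|_{\Bi{s-1}}$ with $s-1\geq -1>-3/2$, and $F$ is a product reachable by Lemma \ref{bilinearlemma}, which is how the paper establishes (\ref{tilgtilh}) and may invoke Lemma \ref{phidecay} for this part only. The leftover $\tilde{\Gamma}$ (force terms plus $H_2$) is handled \emph{outside} Lemma \ref{phidecay}: the paper measures it only in spaces with indices $\geq -3/2$, shows it decays at the maximal rate because the difference factor can be placed at top regularity $\Bi{3/2-\ep}\cap\dot{H}^3$ (where it carries the weight $(1+t)^{-\frac{3/2-\ep-s_0}{2}}$), and then bounds its Duhamel contribution by a direct convolution with the semigroup decay $(1+\tau)^{-\frac{3}{4}-\frac{s}{2}}$ from Lemma \ref{decayinfty}, which reproduces the target rate $(1+t)^{-\frac{s-s_0}{2}}$. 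Without this splitting --- or some substitute treatment of the non-divergence terms --- your bootstrap inequality $M\lesssim\|U(0)\|_{\Bi{s_0}\cap\dot{H}^4}+\delta_1 M$ cannot be established.
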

The proofs of Proposition \ref{aprioriesti} and Proposition \ref{diffdecayapri} below proceed in parallel. To derive a priori estimates in the homogeneous Besov spaces, we reformulate equation (\ref{eq}). 
Let $(\rho,v,\theta)$ be a solution of (\ref{eq}) with an external force $f$. Set
\begin{align*}
    \sigma=\rho-\rho_\infty,\ \ m=\rho v\ \ \ \mathrm{and}\ \ \ \eta=\theta-\theta_\infty.
\end{align*}
We introduce the modified energy functional $\mathcal{E}=\mathcal{E}(\sigma,v,\eta)$ by
\begin{align*}
    \mathcal{E}=\frac{1}{2}\left(1+\theta_\infty\frac{q_2}{p_2}\right)m\cdot v + c_V\rho\eta + \frac{p_1}{2\rho_\infty}\left(1+\theta_\infty\left(\frac{q_2}{p_2}-\frac{q_1}{p_1}\right)\right) \sigma^2,
\end{align*}
where $p_1=\p_\rho P(\rho_\infty,\theta_\infty)$, $p_2=\p_\theta P(\rho_\infty,\theta_\infty)$, $q_1=\p_\rho^2 P(\rho_\infty,\theta_\infty)$ and $q_2=\p_\rho \p_{\theta}P(\rho_\infty,\theta_\infty)$.
Since we have the identities
\begin{align*}
    \frac{1}{2} \p_t(\sigma^2) = -\rho_\infty \sigma \dive\, v - \sigma \dive(\sigma v),
\end{align*}
\begin{align*}
    &\frac{1}{2}\partial_{t}\left(m\cdot v\right) + \frac{1}{2}\dive \left((m\cdot v) v\right) \\
    &\hspace{50pt} = (\mu\Delta v+(\mu+\mu')\nabla\dive\,v)\cdot v-v
    \cdot\nabla P + f \cdot m,
\end{align*}
and
\begin{align*}
    \partial_t(\rho\eta) + \dive(\eta m) +\frac{\theta\partial_\theta P}{c_V}\dive v=\frac{\kappa}{c_V}\Delta\eta + \frac{\Psi}{c_V},
\end{align*}
the pair of functions $(\sigma,m,\E)$ satisfies the system of equations
\begin{equation} \label{energychap3}
    \left\{
    \begin{aligned}
        &\partial_t \sigma + \dive\,m=0,\\
        &\partial_t m - \frac{1}{\rho_\infty}\mathcal{A}m + p_1\nabla \sigma +\frac{p_2}{c_V \rho_\infty}\nabla \E = G+\rho f,\\
        &\partial_t \E - \frac{\kappa}{c_V\rho_\infty}\Delta \E + \frac{ p_2\theta_\infty}{c_V\rho_\infty}\dive\,m=H+\left(1+\theta_\infty\frac{q_2}{p_2}\right)f\cdot m,
    \end{aligned}
    \right.
\end{equation}
where $\mathcal{A}=\mu\Delta m - (\mu+\mu')\nabla\dive$, $p_1=\partial_\rho P(\rho_\infty,\theta_\infty)$, $p_2=\partial_\theta P(\rho_\infty,\theta_\infty)$, the functions $G=G(\sigma,v,\eta)$ and $H=H_1+H_2=H_1(\sigma,v,\eta)+H_2(\sigma,v,\eta)$ are given by
\begin{align*}
    G=-\dive\left(\frac{1}{\rho} m\otimes m\right) -p_2\nabla\mathcal{R}- \nabla Q_1 +\mathcal{A}\left(\left(\frac{1}{\rho}-\frac{1}{\rho_\infty}\right)m\right),
\end{align*}
\begin{align*}
    &H_1= -c_V\,\dive\left(\eta m\right) - \left(1+\theta_\infty \frac{q_2}{p_2}\right)(p_1\dive(\sigma v)+ p_2 \dive(\eta v)) \\
    &\hspace{70pt}-\frac{1}{2}\left(1+\theta_\infty\frac{q_2}{p_2}\right)\dive((m\cdot v)v)-\kappa\Delta\mathcal{R}\\
    &H_2=- \left(1+\theta_\infty \frac{q_2}{p_2}\right)v\cdot\nabla Q_1-Q_2 \dive\, v\\
    &\hspace{70pt}-\frac{p_1}{\rho_\infty}\left(1+\theta_\infty\left(\frac{q_2}{p_2}-\frac{q_1}{p_1}\right)\right) \sigma\dive(\sigma v) \\
    &\hspace{70pt}+\left(1+\theta_\infty\frac{q_2}{p_2}\right)(\mathcal{A}v)\cdot v+\Psi.
\end{align*}
Here, the functions $Q_1=Q_1(\sigma,\eta)$, $Q_2=Q_2(\sigma,\eta)$ and $\mathcal{R}=\mathcal{R}(\sigma,v,\eta)$ are given by
\begin{align*}
    Q_1 = P-P(\rho_\infty,\theta_\infty)-p_1\sigma-p_2\eta,
\end{align*}
\begin{align*}
    Q_2 = \theta \p_\theta P -\theta_\infty p_2 -\theta_\infty q_1 \sigma - (p_2 + \theta_\infty q_2)\eta
\end{align*}
and
\begin{align*}
    \mathcal{R}= \eta-\frac{1}{c_V \rho_\infty}\mathcal{E}.
\end{align*}

By rewriting the equations using the modified energy functional $\mathcal{E}$, the nonlinear terms $(G, H)$ in (\ref{energychap3}) are decomposed into $(G, H_1)$, in which all terms are in divergence or potential form, and $H_2$, which consists only of higher-order terms.

We shall rewrite the equation (\ref{energychap3}) by using the linearized semigroup around the constant state. Let
\begin{align*}
    U=U(\sigma,v,\eta)=\left(\sqrt{p_1}\sigma,\ m,\ \sqrt{\frac{c_V}{\theta_\infty}}\E\right)^{\mathsf{T}}.
\end{align*} 
Then, the Duhamel principle implies 
\begin{align*}
    U(t)=e^{tA}U_0 + \int_0^t e^{\tau A}\left[ \begin{matrix*} 0\\ G+\rho f \\ \sqrt{\frac{c}{\theta_\infty}}H +\left(1+\theta_\infty\frac{q_2}{p_2}\right)\rho f\cdot v \end{matrix*} \right](t-\tau) d\tau
\end{align*}
where $U_0 = U|_{t=0}$ and $e^{tA}$ is a Fourier multiplier with symbol $e^{t\hat{A}(\xi)}$, 
\begin{align} \label{ahatmat}
    \hat{A}(\xi)=\left[\begin{matrix*}
        0 & -i\sqrt{p_1} \xi^\mathsf{T} & 0\\
        -i\sqrt{p_1} \xi & -\frac{\mu}{\rho_\infty}|\xi|^2 I_3 + \frac{\mu+\mu'}{\rho_\infty}\xi\otimes \xi  &- i\frac{p_2}{\rho_\infty}\sqrt{\frac{\theta_\infty}{c_V}}\xi\\
        0 & - \frac{p_2}{\rho_\infty}\sqrt{\frac{\theta_\infty}{c_V}}\xi^{\mathsf{T}} & -\frac{\kappa}{c\rho_\infty}|\xi|^{2}
    \end{matrix*}\right],\ \ \ \xi\in\mathbb{R}^3.
\end{align}

The $5\times 5$ matrix $\hat{A}(\xi)$ has eigenvalues $\lambda_n(\xi)$, $n=1,\ldots,4$, which satisfy that there exist constants $r_0, \beta>0$, such that
\begin{align} \label{rambda1}
    \operatorname{Re}\lambda_n(\xi) \sim -|\xi|^{2}\ \ \ \mathrm{for}\ \ \ |\xi|\leq r_0
\end{align}
and
\begin{align} \label{rambda2}
    \operatorname{Re}\lambda_n(\xi) \leq -\beta\ \ \ \mathrm{for}\ \ \ |\xi|>r_0
\end{align}
for $n=1,\ldots,4$. Furthermore, the spectral resolutions
\begin{align} \label{projec}
    e^{t\hat{A}(\xi)}=\sum_{n=1}^4 e^{t\lambda_{n}(\xi)}P_{n}(\xi),\ \ \  \xi\in\mathbb{R}^3,
\end{align}
hold, where $P_n(\xi)$ is the corresponding projection matrix.
The proofs of $(\ref{rambda1})$, $(\ref{rambda2})$ and $(\ref{projec})$ are found in \cite[Lemma 3.1]{MR555060}, \cite[Lemma 4.2]{MR553964}. Define the low-frequency cut-off operator $\dot{S}_{j}$, $j\in\mathbb{Z}$ by 
\begin{align*}
    \dot{S}_{j}u=\sum_{j'<j} \dot{\Delta}_{j'} u,\ \ \ u\in\mathcal{S}'.
\end{align*}
Then, the spectral properties $(\ref{rambda1})$ and $(\ref{rambda2})$ give the following estimate for the semigroup $e^{tA}$. Here, we note that Danchin \cite[Proposition 2.2]{MR1886005} has already shown the following estimates for the isentropic compressible Navier-Stokes equations.


\begin{lem} \label{decayinfty}
    Let $s,s_0\in\mathbb{R}$ satisfy $s_0\leq s$. Then, there exists $j_0\in\mathbb{Z}$ such that
    \begin{align} \label{semidecay}
        \|\dot{S}_{j_0}e^{tA}V_0\|_{\dot{B}^{s}_{2,r}}\lesssim (1+t)^{-\frac{s-s_0}{2}}\|V_0\|_{\dot{B}^{s_0}_{2,r}},\ \ \ t\geq0,
    \end{align}
    for any $1\leq r\leq \infty$, $V_0\in \Bi{s_0}$ and 
    \begin{align} \label{semidecay2}
        \|(1-\dot{S}_{j_0})e^{tA}V_0\|_{\dot{B}^{s}_{2,r}}\lesssim e^{-ct}\|V_0\|_{\dot{B}^{s_0}_{2,r}},\ \ \ t\geq0,
    \end{align}
    for any $1\leq r\leq \infty$, $V_0\in \Bi{s}$, where $c>0$ is a constant.
    Furthermore, for any $1\leq r\leq \infty$, we have 
    \begin{align} \label{semidecay3}
        \|\dot{S}_{j_0}e^{tA}V_0\|_{L^r(\dot{B}^{s}_{2,r})} \lesssim \|V_0\|_{\dot{B}^{s-\frac{2}{r}}_{2,r}},\ \ \ V_0\in \dot{B}^{s-\frac{2}{r}}_{2,r}.
    \end{align}
    The estimates $(\ref{semidecay})$, $(\ref{semidecay2})$ and $(\ref{semidecay3})$ remain valid when $A$ is replaced by its adjoint $A^*$.
\end{lem}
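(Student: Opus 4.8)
The plan is to transfer everything to the Fourier side, where $e^{tA}$ and each dyadic block $\delj$ are Fourier multipliers and hence commute, so that all three estimates reduce to pointwise-in-$\xi$ control of the matrix exponential $e^{t\hat A(\xi)}$. The single fact I would extract from the spectral data $(\ref{rambda1})$, $(\ref{rambda2})$ and $(\ref{projec})$ is the operator-norm bound
\[
\|e^{t\hat A(\xi)}\|\lesssim e^{-c t|\xi|^2}\quad(|\xi|\leq r_0),\qquad \|e^{t\hat A(\xi)}\|\lesssim e^{-\beta t}\quad(|\xi|> r_0),
\]
obtained by writing $e^{t\hat A(\xi)}=\sum_{n=1}^{4}e^{t\lambda_n(\xi)}P_n(\xi)$ and using $|e^{t\lambda_n(\xi)}|=e^{t\operatorname{Re}\lambda_n(\xi)}$ together with uniform bounds on the projections $P_n(\xi)$. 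I then fix $j_0\in\mathbb{Z}$ with $2^{j_0}\sim r_0$, so that $\delj$ with $j<j_0$ sees only the low-frequency regime and $j\geq j_0$ only the high-frequency one, using $|\xi|\sim 2^j$ on $\operatorname{supp}\phi(2^{-j}\cdot)$.

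For the low-frequency decay $(\ref{semidecay})$, the above bound gives $\|\delj e^{tA}V_0\|_{L^2}\lesssim e^{-ct 2^{2j}}\|\delj V_0\|_{L^2}$ for $j<j_0$. Splitting $2^{js}=2^{j(s-s_0)}2^{js_0}$, the whole matter reduces to the elementary inequality
\[
\sup_{j<j_0}2^{j(s-s_0)}e^{-ct2^{2j}}\lesssim (1+t)^{-\frac{s-s_0}{2}},
\]
which I would prove by setting $a=(s-s_0)/2\geq 0$ (here the hypothesis $s_0\leq s$ is used) and $x=2^{2j}$ and maximizing $x^a e^{-ctx}$: the interior critical value is $\sim t^{-a}$, governing $t\geq 1$, while for $t\leq 1$ the constraint $x<2^{2j_0}$ keeps the quantity bounded. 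Factoring out this supremum and taking the $\ell^r(\mathbb{Z})$ norm of the remaining $2^{js_0}\|\delj V_0\|_{L^2}$ yields the claim.

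The high-frequency estimate $(\ref{semidecay2})$ is then immediate: for $j\geq j_0$ one has $\|\delj e^{tA}V_0\|_{L^2}\lesssim e^{-\beta t}\|\delj V_0\|_{L^2}$, and pulling the common factor $e^{-\beta t}$ out of the $\ell^r$ norm gives the bound with $c=\beta$ and no loss of regularity. For the space-time bound $(\ref{semidecay3})$ I would combine Fubini with the low-frequency decay:
\[
\int_0^\infty\!\|\dot{S}_{j_0}e^{tA}V_0\|_{\Br{s}}^r dt=\sum_{j<j_0}2^{jsr}\!\int_0^\infty\!\|\delj e^{tA}V_0\|_{L^2}^r dt\lesssim \sum_{j<j_0}2^{jsr}2^{-2j}\|\delj V_0\|_{L^2}^r,
\]
since $\int_0^\infty e^{-crt2^{2j}}dt\sim 2^{-2j}$; recognizing $2^{jsr}2^{-2j}=(2^{j(s-2/r)})^{r}$ reassembles the right-hand side into $\|V_0\|_{\Br{s-2/r}}^r$. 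The cases $r=\infty$ (where $2/r=0$ and the bound is the $s_0=s$ instance of $(\ref{semidecay})$) and the adjoint $A^*$ (whose symbol has eigenvalues $\overline{\lambda_n(\xi)}$, hence the same real parts, so $(\ref{rambda1})$–$(\ref{projec})$ persist verbatim) are handled by the same arguments.

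The main obstacle is not the time analysis but upgrading the spectral resolution $(\ref{projec})$ to a bound uniform in $\xi$, i.e.\ establishing $\|P_n(\xi)\|\lesssim 1$ on all of $\mathbb{R}^3$. This is delicate precisely where two eigenvalues $\lambda_n(\xi)$ nearly coincide, since the corresponding spectral projections can a priori blow up, and also in controlling the behaviour as $|\xi|\to\infty$. I would address this either by invoking the uniform estimates from the cited spectral analyses \cite{MR555060,MR553964}, or, more robustly, by bypassing the projections altogether and deriving $|e^{t\hat A(\xi)}V_0|\lesssim e^{-c\min(|\xi|^2,1)t}|V_0|$ directly through a Fourier-side Lyapunov functional for the symmetrized system — the symmetrization being exactly what the modified energy functional $\mathcal{E}$ in $(\ref{modiene})$ is designed to furnish.
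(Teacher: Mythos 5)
Your proposal is correct and follows essentially the same route as the paper's proof: both commute the Fourier multipliers $\delj$, $\dot{S}_{j_0}$ with $e^{tA}$, use (\ref{rambda1})--(\ref{projec}) to obtain the blockwise bounds $2^{js}\|\delj \dot{S}_{j_0}e^{tA}V_0\|_{L^2}\lesssim 2^{js}e^{-c2^{2j}t}\|\delj V_0\|_{L^2}$ for $j<j_0$ and the factor $e^{-ct}$ for $j\geq j_0$, and then take $\ell^{r}(\mathbb{Z})$ and $L^r_t(\ell^r(\mathbb{Z}))$ norms to conclude (\ref{semidecay}), (\ref{semidecay2}) and (\ref{semidecay3}). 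The only difference is that you spell out two points the paper leaves implicit, namely the elementary inequality $\sup_{j<j_0}2^{j(s-s_0)}e^{-ct2^{2j}}\lesssim (1+t)^{-\frac{s-s_0}{2}}$ and the uniform boundedness of the projections $P_n(\xi)$, the latter being precisely the input the paper imports from the cited spectral analyses \cite{MR555060,MR553964}.
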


\begin{proof}
    From (\ref{rambda1}) and (\ref{rambda2}), it follows that there exist constants $c_1,c_2>0$ and $j_0\in\mathbb{Z}$ such that
    \begin{align*}
        |\phi(2^{-j}\xi)e^{t\lambda_n(\xi)}| \lesssim 
        \begin{cases}
            e^{-c_1t|\xi|^2} & \text{if $j< j_0$,} \\
            e^{-c_2 t}       & \text{if $j\geq j_0$.} 
        \end{cases}
    \end{align*}
    for any $n=1,\ldots,4$ and $\xi\in\mathbb{R}^3$. Then, the spectral resolutions (\ref{projec}) yield
    \begin{align} \label{2je}
        2^{sj}\|\delj \dot{S}_{j_0}e^{tA}V_0\|_{L^{2}}\lesssim_{r_0} 2^{sj}e^{-\tilde{c}_12^{2j}t} \|\delj V_0\|_{L^2}
    \end{align}
    and
    \begin{align} \label{2je2}
        2^{sj}\|\delj (1-\dot{S}_{j_0})e^{tA}V_0\|_{L^{2}}\lesssim_{r_0} e^{-\tilde{c}_2 t} 2^{sj}\|\delj V_0\|_{L^2},
    \end{align}
    where $\tilde{c}_1,\tilde{c}_2>0$ are constants. By taking the $\ell^{r}(\mathbb{Z})$ norm and the $L^r(\ell^{r}(\mathbb{Z}))$ norm, respectively, of both sides of (\ref{2je}), we obtain the estimates (\ref{semidecay}) and (\ref{semidecay3}), respectively. The estimate (\ref{semidecay2}) is obtained by taking the $\ell^{r}(\mathbb{Z})$ norm of both sides of (\ref{2je2}).
\end{proof}

The following lemma will be used to derive the time-weighted estimate. The proof of Lemma \ref{phidecay} below is the same as in \cite[Proposition 5.2]{MR4803477}.
\begin{lem} \label{phidecay}
    Let $\widetilde{T}_1>0$, $-3/2\leq s_0\leq 1/2$ and $\Phi\in L^\infty_{T_2}(\Bi{s_1-2}\cap\Bi{3/2})$ with  $s_{1}=\max\{s_0,0\}$. For any $0<\ep<1/2$, we set
    \begin{align*}
        D_\ep= \sup_{s_1\leq s\leq\frac{3}{2}-\ep}\ \sup_{0\leq t\leq \widetilde{T}_1} (1+t)^{\frac{s-s_0}{2}}\|\Phi(t)\|_{\Bi{s-2}\cap\Bi{\frac{3}{2}}},
    \end{align*}
    then we have 
    \begin{align} \label{inhomesti}
        \sup_{0\leq t\leq \widetilde{T}_1}(1+t)^{\frac{s-s_0}{2}}\left\|\int_0^t e^{\tau A}\Phi(t-\tau)d\tau\right\|_{\Bi{s}}\lesssim_{\ep} D_\ep
    \end{align}
    for any $s\in\mathbb{R}$ with $\max\{s_0,-3/2+\ep\}\leq s\leq 3/2-\ep$. 
\end{lem}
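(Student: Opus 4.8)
The plan is to estimate the Duhamel integral $\int_0^t e^{\tau A}\Phi(t-\tau)\,d\tau$ in $\Bi{s}$ by splitting the semigroup into its low- and high-frequency parts via the cut-off $\dot S_{j_0}$ of Lemma \ref{decayinfty}, and then to split the time integral at $t/2$ so that on each half I can place the $(1+\cdot)^{-\alpha}$ decay factor onto whichever of the two functions ($e^{\tau A}$ or $\Phi(t-\tau)$) carries the larger argument. For the low-frequency part I would use the decay estimate \eqref{semidecay}, which for $V_0=\Phi(t-\tau)$ gives $\|\dot S_{j_0}e^{\tau A}\Phi(t-\tau)\|_{\Bi{s}}\lesssim (1+\tau)^{-\frac{s-\tilde s}{2}}\|\Phi(t-\tau)\|_{\Bi{\tilde s}}$ for any $\tilde s\le s$; the freedom in choosing the source exponent $\tilde s$ is exactly what lets me exploit the smoothing index $s-2$ built into $D_\ep$. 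For the high-frequency part I would use the exponential decay \eqref{semidecay2}, which is harmless since $e^{-c\tau}$ is integrable and $\Phi$ is controlled in $\Bi{3/2}$.

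The key computation is the low-frequency bound. On the interval $\tau\in[0,t/2]$ the argument $t-\tau\sim t$ is large, so I would pull out $\|\Phi(t-\tau)\|_{\Bi{\tilde s}}\lesssim (1+t)^{-\frac{\tilde s-s_0}{2}}D_\ep$ from the definition of $D_\ep$ (choosing $\tilde s$ in the admissible window $s_1\le \tilde s\le 3/2-\ep$), leaving
\begin{align*}
    \int_0^{t/2}(1+\tau)^{-\frac{s-\tilde s}{2}}\,d\tau \cdot (1+t)^{-\frac{\tilde s-s_0}{2}}.
\end{align*}
On the interval $\tau\in[t/2,t]$ the semigroup argument $\tau\sim t$ is large, so I instead keep the decay on $e^{\tau A}$ and pull out the time-weight of $\Phi$ near its small argument $t-\tau$. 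In each case the product of the $\tau$-integral with the prefactor must be $\lesssim (1+t)^{-\frac{s-s_0}{2}}$, which is the target weight in \eqref{inhomesti}. Getting the exponents to balance forces the selection of $\tilde s$: I would take $\tilde s=s-2$ (using the source bound in $\Bi{s-2}$) on the first interval when $s-2\ge s_1$, and fall back to $\tilde s=s_1$ or $\tilde s=3/2-\ep$ near the endpoints of the allowed $s$-range, where the constraint $s\le 3/2-\ep$ keeps the $\tau$-integral $\int_0^{t/2}(1+\tau)^{-\frac{s-\tilde s}{2}}\,d\tau$ from diverging and the constraint $s\ge \max\{s_0,-3/2+\ep\}$ keeps the resulting power of $(1+t)$ on the correct side.

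The main obstacle is the bookkeeping of these competing constraints: the integrability of $(1+\tau)^{-\frac{s-\tilde s}{2}}$ over $[0,t/2]$ requires $s-\tilde s>2$, i.e. it may produce a logarithmic factor or a leftover power of $(1+t)$ that must be absorbed into the target weight, and this is precisely where the $\ep$-loss (hence the restriction $s\le 3/2-\ep$ rather than $s\le 3/2$) enters. I would handle the borderline integrals by noting that $\int_0^{t/2}(1+\tau)^{-\alpha}\,d\tau$ is bounded by a constant when $\alpha>1$, by $\log(2+t)$ when $\alpha=1$, and by $(1+t)^{1-\alpha}$ when $\alpha<1$, and then verify in each of the three regimes that, after multiplying by the prefactor $(1+t)^{-\frac{\tilde s-s_0}{2}}$, the total power does not exceed $-\frac{s-s_0}{2}$; the $\ep$ margin provides the strict inequality that absorbs any logarithm. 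Since the structure is identical to \cite[Proposition 5.2]{MR4803477}, the remaining details follow that argument verbatim, with $A$ here playing the role of the isentropic linearized operator there.
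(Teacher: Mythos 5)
Your overall architecture (low/high-frequency splitting via $\dot{S}_{j_0}$, time splitting at $t/2$, exponential decay \eqref{semidecay2} for the high frequencies) is sound, and for every $s$ \emph{strictly} below $3/2-\ep$ your bookkeeping does close: on $[0,t/2]$ one can choose the source index $\tilde{s}$ with $s-2<\tilde{s}\le\min\{s,-1/2-\ep\}$, and then the integral contributes $(1+t)^{1-\frac{s-\tilde{s}}{2}}$ while $\|\Phi(t-\tau)\|_{\Bi{\tilde{s}}}$ contributes $(1+t)^{-\frac{\tilde{s}+2-s_0}{2}}D_\ep$, the total being exactly $(1+t)^{-\frac{s-s_0}{2}}D_\ep$. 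The genuine gap is at the endpoint $s=3/2-\ep$, which the lemma asserts and which is what is actually used downstream (Propositions \ref{lowd} and \ref{highd} need the $\Bi{\frac{3}{2}-\ep}$ estimate). There the window for $\tilde{s}$ degenerates: the highest source regularity controlled by $D_\ep$ is $\Bi{-\frac{1}{2}-\ep}=\Bi{s-2}$, so you are forced to take $\tilde{s}=s-2$, the convolution integral becomes the borderline $\int_0^{t/2}(1+\tau)^{-1}d\tau\sim\log(2+t)$, and --- contrary to your claim --- there is no $\ep$ margin left to absorb the logarithm: the decay of $\|\Phi(t)\|_{\Bi{s-2}}$ supplied by $D_\ep$ and the decay demanded by \eqref{inhomesti} carry exactly the same exponent $\frac{s-s_0}{2}$, because the $\ep$ in the definition of $D_\ep$ and the $\ep$ in the target range are the same $\ep$. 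Taking $\tilde{s}$ strictly below $s-2$ makes the $\tau$-integral converge but loses precisely the corresponding power of $(1+t)$ on the $\Phi$ factor; using the $\Bi{\frac{3}{2}}$ component of $D_\ep$ instead falls short by the same computation; and the endpoint cannot be recovered afterwards by interpolation, since it is an extreme point of the admissible range of $s$ and no estimate above it is available.

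This endpoint is exactly where the paper's proof departs from the direct convolution estimate. It pairs the low-frequency part of the Duhamel integral with a test function $\psi$, moves the semigroup onto $\psi$ as the adjoint $e^{\tau A^*}$, and on $[0,t/2]$ invokes the $L^1$-in-time smoothing estimate \eqref{semidecay3} with $r=1$, namely $\int_0^{t/2}\|\dot{S}_{j_0}e^{\tau A^*}\psi\|_{\Bo{\frac{1}{2}+\ep}}d\tau\lesssim\|\psi\|_{\Bo{-\frac{3}{2}+\ep}}$. This gains the full two derivatives with a time-integrable bound and no logarithm: the $L^1_t$ norm integrates $e^{-c2^{2j}\tau}$ blockwise in $j$ \emph{before} summing, which is strictly stronger than integrating the pointwise-in-time decay \eqref{semidecay}, and it is precisely what closes the case $s=3/2-\ep$; the remaining values of $s$ are then obtained from the two endpoints by interpolation. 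To repair your argument you would need to replace the pointwise bound on $[0,t/2]$ by this duality argument (or prove and use an analogue of \eqref{semidecay3} for the forward semigroup); your treatment of the far interval $[t/2,t]$, of the high frequencies, and of the interior values of $s$ can remain as written.
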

\begin{proof}
    We first show the estimate (\ref{inhomesti}) with $s=3/2-\ep$. Let
    \begin{align*}
        \alpha_0 = 
        \begin{cases}
            3 & \text{if $-3/2\leq s_0\leq 0$,} \\
            5/2 & \text{if $0 <s_0\leq 1/2$.} 
        \end{cases}
    \end{align*}
    For any test function $\psi\in\mathcal{S}$, by Lemma \ref{fund2} (ii),
    \begin{align*}
        \left\langle\dot{S}_{j_0}\int_0^{t} e^{\tau A}\Phi(t-\tau)d\tau, \psi\right\rangle &= \int_0^{t} \langle \Phi(t-\tau),\dot{S}_{j_0}e^{\tau A^*}\psi\rangle d\tau\\
        &\lesssim \int_0^{\frac{t}{2}} \|\Phi(t-\tau)\|_{\Bi{-\frac{1}{2}-\ep}}\|\dot{S}_{j_0}e^{\tau A^*}\psi\|_{\Bo{\frac{1}{2}+\ep}} d\tau\\
        &\hspace{5pt}+\int_{\frac{t}{2}}^t \|\Phi(t-\tau)\|_{\Bi{\frac
        {3}{2}-\ep-\alpha_0}}\|\dot{S}_{j_0}e^{\tau A^*}\psi\|_{\Bo{\alpha_0-\frac{3}{2}+\ep}} d\tau.
    \end{align*}
    By Lemma \ref{decayinfty}, we have
    \begin{align*}
        \|\dot{S}_{j_0}e^{\tau A^*}\psi\|_{\Bo{\alpha_0-\frac{3}{2}+\ep}} &\lesssim (1+\tau)^{-\frac{\alpha_0}{2}}\|\psi\|_{\Bo{-\frac{3}{2}+\ep}}\\
        &\lesssim (1+t)^{-\frac{\alpha_0}{2}}\|\psi\|_{\Bo{-\frac{3}{2}+\ep}} ,\ \ \ \frac{t}{2}\leq \tau\leq t,
    \end{align*}
    and
    \begin{align*}
        \int_0^{\frac{t}{2}} \|\dot{S}_{j_0}e^{\tau A^*}\psi\|_{\Bo{\frac{1}{2}+\ep}} d\tau\lesssim \|\psi\|_{\Bo{-\frac{3}{2}+\ep}},
    \end{align*}
    so that we obtain
    \begin{align*}
        \left\langle\dot{S}_{j_0}\int_0^{t} e^{\tau A}\Phi(t-\tau)d\tau, \psi\right\rangle \lesssim (1+t)^{-\frac{3}{4}+\frac{s_0+\ep}{2}}D_\ep.
    \end{align*}
    Lemma \ref{fund2} (ii) now shows that
    \begin{align*}
        \left\|\dot{S}_{j_0}\int_0^{t} e^{\tau A}\Phi(t-\tau)d\tau\right\|_{\Bi{\frac{3}{2}-\ep}}\lesssim (1+t)^{-\frac{3}{4}+\frac{s_0+\ep}{2}}D_\ep.
    \end{align*}
    By Lemma \ref{decayinfty}, 
    \begin{align*}
        \left\|(1-\dot{S}_{j_0})\int_0^{t} e^{\tau A}\Phi(t-\tau)d\tau\right\|_{\Bi{\frac{3}{2}-\ep}}&\lesssim \int_{0}^t e^{-ct} \|\Phi(t-\tau)\|_{\Bi{\frac{3}{2}-\ep}} d\tau\\
        &\lesssim (1+t)^{-\frac{3}{4}+\frac{s_0+\ep}{2}}D_\ep.
    \end{align*}
    Then, we obtain the estimate (\ref{inhomesti}) with $s=3/2-\ep$. Next, we show the estimate (\ref{inhomesti}) with $s=\max\{s_0,-3/2+\ep\}$. If $s<1/2$, then by Lemma \ref{decayinfty}, we have
    \begin{align*}
        \left\|\int_0^{t} e^{\tau A}\Phi(t-\tau)d\tau\right\|_{\Bi{s}} &\lesssim \int_{0}^t (1+\tau)^{-\frac{3}{4}-\frac{s_0}{2}}\|\Phi(t-\tau)\|_{\Bi{-\frac{3}{2}}\cap\Bi{s}}d\tau \\
        &\lesssim D_\ep \int_0^t (1+\tau)^{-\frac{3}{4}-\frac{s}{2}}(1+t-\tau)^{-\frac{1}{4}+\frac{s}{2}}d\tau\\
        &\lesssim D_\ep.
    \end{align*}
    If $s=1/2$, then Lemma \ref{decayinfty} implies that
    \begin{align*}
        \left\|\int_0^{t} e^{\tau A}\Phi(t-\tau)d\tau\right\|_{\Bi{\frac{1}{2}}}\lesssim D_\ep.
    \end{align*}
    By interpolating the estimates in (\ref{inhomesti}) with $s=\max\{s_0,-3/2+\ep\}$ and $s=3/2-\ep$, we obtain the estimate (\ref{inhomesti}) for any $\max\{s_0,-3/2+\ep\}\leq s \leq 3/2-\ep$.
\end{proof}
The following proposition shows the time-weighted estimate in $\Bi{s}$ for the differences
\begin{align*}
    \tilde{\sigma}:=\rho_1-\rho_2,\ \tilde{v}:=v_1-v_2,\ \tilde{\eta}:=\eta_1-\eta_2.
\end{align*}
\begin{prop} \label{lowd}
    Under the assumption of Proposition $\ref{diffdecayapri}$ with small $\delta_1>0$, for any $-3/2+\ep \leq s\leq 3/2-\ep$ and $-3/2\leq s_0\leq 1/2$ with $s_0\leq s$, we have
    \begin{align*} 
        \sup_{0\leq t\leq T_1}(1+t)^{\frac{s-s_0}{2}}\|(\tilde{\sigma},\tilde{v},\tilde{\eta})(t)\|_{\Bi{s}}\lesssim_{\ep} \delta_1 \mathcal{D}_{\ep}(T_1)+ \|(\tilde{\sigma}_0,\tilde{v}_0,\tilde{\eta}_0)\|_{\Bi{s_0}\cap\Bi{\frac{3}{2}-\ep}},
    \end{align*}
     where $\tilde{\sigma}_0=\rho_{0,1}-\rho_{0,2}$, $\tilde{v}_0=v_{0,1}-v_{0,2}$, $\tilde{\eta}_0=\theta_{0,1}-\theta_{0,2}$ and
     \begin{align*}
        \mathcal{D}_{\ep}(T_1)=\sup_{s_1\leq \tilde{s}\leq \frac{3}{2}-\ep} \sup_{0\leq t\leq T_1} (1+t)^{\frac{\tilde{s}-s_0}{2}}\|(\tilde{\sigma},\tilde{v},\tilde{\eta})\|_{\Bi{\tilde{s}}\cap\dot{H}^4}, \ \ s_1=\max\{0,s_0\}.
    \end{align*}
\end{prop}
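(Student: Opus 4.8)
The plan is to pass to the reformulated unknowns of (\ref{energychap3}), derive the linear-plus-forcing equation satisfied by the differences, represent the solution by Duhamel's formula, and then estimate the linear and nonlinear parts separately using the semigroup bounds of Lemma~\ref{decayinfty} and the time-weighted bound of Lemma~\ref{phidecay}. Concretely, write $U_i=U(\sigma_i,v_i,\eta_i)$ for $i=1,2$ and set $\tilde U=U_1-U_2$. Since $U_1$ and $U_2$ solve (\ref{energychap3}) with the same force $f$ and the same constant-coefficient generator $A$ of (\ref{ahatmat}), the difference obeys $\partial_t\tilde U-A\tilde U=\tilde\Phi$, where $\tilde\Phi$ collects the differences of the nonlinear terms $G$, $H=H_1+H_2$ and of the force terms $\rho f$ and $(1+\theta_\infty q_2/p_2)\rho f\cdot v$. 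Duhamel's principle then gives
\[
\tilde U(t)=e^{tA}\tilde U_0+\int_0^t e^{\tau A}\tilde\Phi(t-\tau)\,d\tau .
\]

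For the linear part I would split into low and high frequencies via $\dot S_{j_0}$. The low-frequency piece is bounded by (\ref{semidecay}) as $(1+t)^{-(s-s_0)/2}\|\tilde U_0\|_{\Bi{s_0}}$, while the high-frequency piece decays exponentially by (\ref{semidecay2}); since $s\ge s_0$ gives $e^{-ct}\lesssim(1+t)^{-(s-s_0)/2}$ and $s\le 3/2-\ep$, interpolation (Lemma~\ref{fund2}(iii)) controls the latter by $(1+t)^{-(s-s_0)/2}\|\tilde U_0\|_{\Bi{3/2-\ep}}$. Multiplying by $(1+t)^{(s-s_0)/2}$ produces exactly the initial-data term $\|(\tilde\sigma_0,\tilde v_0,\tilde\eta_0)\|_{\Bi{s_0}\cap\Bi{3/2-\ep}}$.

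For the Duhamel term I apply Lemma~\ref{phidecay} with $\Phi=\tilde\Phi$, which reduces everything to the estimate $D_\ep(\tilde\Phi)\lesssim\delta_1\,\mathcal{D}_\ep(T_1)$, where $D_\ep(\tilde\Phi)$ is the time-weighted norm of $\tilde\Phi$ in $\Bi{s-2}\cap\Bi{3/2}$ over $s\in[s_1,3/2-\ep]$ with $s_1=\max\{0,s_0\}$. (Note the source index range $[s_1,3/2-\ep]$ coincides with the range of $\mathcal{D}_\ep$; the lower outputs $s<s_1$ in the proposition are furnished by the smoothing built into Lemma~\ref{phidecay}, not by any low-regularity bound on $\tilde\Phi$.) Each term of $\tilde\Phi$ is the difference of a quadratic or composite nonlinearity, so it telescopes into a sum of products, each carrying exactly one \emph{difference factor} from $\{\tilde\sigma,\tilde v,\tilde m,\tilde\eta\}$ and one \emph{solution/force factor} from $\{\sigma_i,v_i,m_i,\eta_i,f\}$. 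I would estimate the genuinely bilinear terms by Lemma~\ref{bilinearlemma}, using Lemma~\ref{fund2}(i) to absorb the derivatives ($\dive$ lowers the target index by one, $\mathcal{A}$ by two), and handle the composite terms $Q_1,Q_2,\mathcal{R}$ and $1/\rho$ by writing their differences through the fundamental theorem of calculus and invoking Lemma~\ref{compos}, which yields a factor $\|(\tilde\sigma,\tilde\eta)\|_{\Bi{s}}$ times a constant depending on $\|(\sigma_i,\eta_i)\|_{\Bo{3/2}}\lesssim\delta_1$. In every product the solution (or force) factor sits at a fixed regularity and is bounded by $\delta_1$ via (\ref{apriassump}) (the force via its smallness in $\Bi{-3/2}\cap\dot{H}^{k-1}$), while the difference factor carries the running index $s$ and is absorbed into $\mathcal{D}_\ep(T_1)$, so that the weight $(1+t)^{(s-s_0)/2}$ matches.

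The hard part will be the bookkeeping in this last step, especially the $\Bi{3/2}$ component of $D_\ep$. Because (\ref{apriassump}) controls the solutions only in $\Bi{1/2}\cap\dot{H}^k$ and $\mathcal{D}_\ep$ controls the differences in $\Bi{\tilde s}\cap\dot{H}^4$ for $\tilde s\le 3/2-\ep$, the high-regularity output in $\Bi{3/2}$ — which for the first-order terms requires products at regularity up to $5/2$, and for the $\mathcal{A}$-terms even higher — lies beyond the reach of Lemma~\ref{bilinearlemma}, whose output is capped below $3/2$. I would therefore interpolate (Lemma~\ref{fund2}(iii)) between the low-regularity control $\Bi{\tilde s}$ and the high-regularity control $\dot{H}^k$ (for the solution factors) and $\dot{H}^4$ (for the difference factors) to manufacture the needed $\Bo{}$-type norms, and then verify that the resulting time-weight exponent is no worse than $(1+t)^{-(s-s_0)/2}$; this is exactly where the fast decay of the $\dot{H}^4$ part of $\mathcal{D}_\ep$, carrying weight up to $(1+t)^{-(3/2-\ep-s_0)/2}$, becomes indispensable. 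Combining the linear and Duhamel bounds and taking the supremum over $t\le T_1$ and over $s\in[-3/2+\ep,3/2-\ep]$ with $s\ge s_0$ then yields the asserted estimate.
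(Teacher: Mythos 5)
Your overall framework (Duhamel's formula for $\tilde U$, semigroup decay plus interpolation for $e^{tA}\tilde U_0$, telescoping plus bilinear/composition estimates for the nonlinear differences) matches the paper's proof in outline, but there is a genuine gap in the step where you feed \emph{all} of $\tilde\Phi$ --- including the force terms and $H_2$ --- into Lemma~\ref{phidecay}. That lemma requires control of $\|\Phi(t)\|_{\Bi{s-2}\cap\Bi{\frac{3}{2}}}$ for every $s\in[s_1,3/2-\ep]$, hence in particular in $\Bi{s_1-2}$, which equals $\Bi{-2}$ whenever $s_0\le 0$ and lies strictly below $\Bi{-\frac{3}{2}}$ whenever $s_0<1/2$. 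Such bounds are attainable only for terms in divergence or potential form, where Lemma~\ref{fund2}(i) lets you pay the missing derivatives with the outer $\dive$ or $\nabla$ (or with $\mathcal{A}$ when $\mathcal{A}$ acts on the \emph{whole} product, as in $\mathcal{A}((\rho^{-1}-\rho_\infty^{-1})m)$), reducing matters to a product estimate at index $s-1\ge -3/2$. For the non-divergence terms of $H_2$ --- $(\mathcal{A}v)\cdot v$, $\Psi$, $v\cdot\nabla Q_1$, $Q_2\dive v$, $\sigma\dive(\sigma v)$ --- and for the force terms $\tilde\sigma f$ and $\rho_1 f\cdot v_1-\rho_2 f\cdot v_2$, no estimate below $\Bi{-\frac{3}{2}}$ is available: Lemma~\ref{bilinearlemma} requires the sum of the two input indices to be nonnegative, so its output index is never below $-3/2$; a product of two $L^2$-based functions is at best in $L^1\hookrightarrow\Bi{-\frac{3}{2}}$; and $f$ itself lies only in $\Bi{-\frac{3}{2}}\cap\dot{H}^{k-1}$, and multiplying it by a smooth factor cannot lower its regularity to $-2$. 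So the inequality $D_\ep(\tilde\Phi)\lesssim\delta_1\mathcal{D}_\ep(T_1)$ that your plan hinges on is unprovable (the relevant norms need not even be finite) except in the borderline case $s_0=1/2$.

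This is precisely why the paper decomposes the right-hand side rather than applying Lemma~\ref{phidecay} once: only $(\tilde G,\tilde H_1)$, which by design of the modified energy functional consist of divergence/potential-form terms, go through Lemma~\ref{phidecay} (estimate (\ref{tilgtilh})); the remainder $\tilde\Gamma$, collecting the force terms and $H_2$, is handled by a separate direct argument (estimate (\ref{gammadecay})). Since every term of $\tilde\Gamma$ carries a $\delta_1$-small (or $\|f\|$-small) factor together with a difference factor decaying at the top rate $(1+t)^{-\frac{1}{2}(\frac{3}{2}-\ep-s_0)}$, it suffices to bound $\tilde\Gamma$ at the \emph{fixed} low regularity $\Bi{-\frac{3}{2}}\cap\Bi{s}$ and then use the semigroup decay (\ref{semidecay}) with rate $(1+\tau)^{-\frac{3}{4}-\frac{s}{2}}$ inside the Duhamel integral; the resulting convolution is $\lesssim(1+t)^{-\frac{s-s_0}{2}}$ because $s_0\ge-3/2$ and $s\le 3/2-\ep$. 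Your proposal would be repaired by adopting this splitting; as written, the single application of Lemma~\ref{phidecay} cannot close the low-regularity end of $D_\ep$.
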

\begin{proof}
    By Lemma \ref{bilinearlemma} and the assumption (\ref{apriassump}), if $\delta_1>0$ is small enough, then
    \begin{align*}
        \|(\tilde{\sigma},\tilde{v},\tilde{\eta})\|_{\Bi{s}} \sim_{\delta_1,\ep} \|\tilde{U}\|_{\Bi{s}}
    \end{align*}
    for any $-3/2+\ep\leq s\leq3/2-\ep$, where $\tilde{U}:=U(\sigma_1,v_1,\eta_1)-U(\sigma_2,v_2,\eta_2)$.
    Thus, by Lemma \ref{phidecay}, it suffices to show that
    \begin{align} \label{tilgtilh}
        \|(\tilde{G},\tilde{H}_1) \|_{\Bi{s-2}\cap\Bi{\frac{3}{2}}} \lesssim \delta_1  \|(\tilde{\sigma},\tilde{v},\tilde{\eta})\|_{\Bi{s}\cap\dot{H}^4}
    \end{align}
    for any $s_1\leq s\leq 3/2-\ep$, where $\tilde{G}=G(\sigma_1,v_1,\eta_1)-G(\sigma_2,v_2,\eta_2)$, $\tilde{H}_1=H_1(\sigma_1,v_1,\eta_1)-H_1(\sigma_2,v_2,\eta_2)$, and
    \begin{align} \label{gammadecay}
        \left\|\int_0^t e^{\tau A}\tilde{\Gamma}(t-\tau) d\tau\right\|_{\Bi{s}}\lesssim \delta_1(1+t)^{-\frac{s-s_0}{2}}\mathcal{D}_{\ep}(T_1)
    \end{align}
    for any $ s_1\leq s\leq 3/2-\ep$, where $\tilde{\Gamma}=\Gamma(\sigma_1,v_1,\eta_1)-\Gamma(\sigma_2,v_2,\eta_2)$ with
    \begin{align*}
        \Gamma(\sigma,v,\eta):=\left[ \begin{matrix*} 0\\ \rho f \\ \sqrt{\frac{c}{\theta_\infty}}H_2(\sigma,v,\eta)+\left(1+\theta_\infty\frac{q_2}{p_2}\right)\rho f\cdot v \end{matrix*} \right].
    \end{align*}
    We first show the estimate (\ref{gammadecay}). 
    Applying Taylor's theorem for $Q_1=Q_1(\sigma,\eta)$, $Q_2=Q_2(\sigma,\eta)$, respectively, there are smooth functions $R_1=R_1(\sigma,\eta)$, $R_2=R_2(\sigma,\eta)$ with $R_1(0,0)=R_2(0,0)=0$ such that
    \begin{align*}
        Q_1=\frac{1}{2}[\sigma,\eta](D^2P(\rho_\infty,\theta_\infty)+R_1)[\sigma,\eta]^{\mathsf{T}} 
    \end{align*}
    and
    \begin{align*}
        Q_2=\frac{1}{2}[\sigma,\eta](D^2\partial_\theta P(\rho_\infty,\theta_\infty)+R_2)[\sigma,\eta]^{\mathsf{T}},
    \end{align*}
    where $D^2P$ and $D^2\partial_\theta P$ are the Hesse matrices.
    Since terms $v\cdot\nabla Q_1(\sigma,\eta)$ and $Q_2(\sigma,\eta)\dive\, v$ consist of product of at least three functions, Lemma \ref{bilinearlemma} shows that
    \begin{align*}
        \|v_1\cdot\nabla Q_1(\sigma_1,\eta_1)-v_2\cdot\nabla Q_1(\sigma_2,\eta_2)\|_{\Bi{-\frac{3}{2}}\cap\Bi{\frac{3}{2}}} \lesssim_{\delta_1} \delta_1^2 \|(\tilde{\sigma},\tilde{\eta})\|_{\Bi{\frac{3}{2}-\ep}\cap\dot{H}^3}
    \end{align*}
    and
    \begin{align*}
        \|Q_2(\sigma_1,\eta_1)\dive\, v_1-Q_2(\sigma_2,\eta_2)\dive\, v_2\|_{\Bi{-\frac{3}{2}}\cap\Bi{\frac{3}{2}}} &\lesssim_{\delta_1} \delta_1^2 \|(\tilde{\sigma},\tilde{v},\tilde{\eta})\|_{\Bi{\frac{3}{2}-\ep}\cap\dot{H}^3}.
    \end{align*}
    Thus, Lemma \ref{bilinearlemma} shows
    \begin{align*}
        \|H_2(\sigma_1,v_1,\eta_1)-H_2(\sigma_2,v_2,\eta_2)\|_{\Bi{-\frac{3}{2}}\cap\Bi{\frac{3}{2}}}\lesssim \delta_1 \|(\tilde{\sigma},\tilde{v},\tilde{\eta})\|_{\Bi{\frac{3}{2}-\ep}\cap\dot{H}^3}.
    \end{align*}
    By Lemma \ref{bilinearlemma} and Lemma \ref{decayinfty}, we have
    \begin{align*}
        &\left\|\int_0^te^{tA}\tilde{\Gamma}(t-\tau)d\tau\right\|_{\Bi{s}}\lesssim \int^t_0 (1+\tau)^{-\frac{3}{4}-\frac{s}{2}}\|\tilde{\Gamma}(t-\tau)\|_{\Bi{-\frac{2}{3}}\cap\Bi{s}} d\tau\\
        &\hspace{50pt}\lesssim \int_0^t (1+\tau)^{-\frac{3}{4}-\frac{s}{2}}\|\tilde{\Gamma}(t-\tau)\|_{\Bi{-\frac{2}{3}}\cap\Bi{s}} d\tau\\
        &\hspace{50pt}\lesssim (\|f\|_{\Bo{-\frac{3}{2}+\ep}}+\delta_1^2)\int_0^t (1+\tau)^{-\frac{3}{4}-\frac{s}{2}}\|(\tilde{\sigma},\tilde{v},\tilde{\eta})\|_{\Bi{\frac{3}{2}-\ep}\cap\dot{H}^3} d\tau\\
        &\hspace{50pt}\lesssim \delta_1\mathcal{D}_{\ep}(T_1)(1+t)^{-\frac{s-s_0}{2}}.
    \end{align*}
    We proceed to show that the estimate in (\ref{tilgtilh}). By Lemma \ref{bilinearlemma}, for any $s_1\leq s\leq 3/2-\ep$, we have
    \begin{align*}
        &\left\|\dive\left(\rho_1 v_1\otimes v_1\right)-\dive\left(\rho_2 v_2\otimes v_2\right)\right\|_{\Bi{s-2}\cap\Bi{\frac{3}{2}}} \\
        &\hspace{30pt}\lesssim \left\|\rho_1 v_1\otimes v_1-\rho_2 v_2\otimes v_2\right\|_{\Bi{s-1}\cap\Bi{\frac{5}{2}}}\\
        &\hspace{30pt}\lesssim \delta_1 \|(\tilde{\sigma},\tilde{v})\|_{\Bi{s}\cap\dot{H}^3},
    \end{align*}
    \begin{align*}
        \left\|\nabla\mathcal{R}(\sigma_1,v_1,\eta_1)-\nabla \mathcal{R}(\sigma_2,v_2,\eta_2)\right\|_{\Bi{s-2}\cap\Bi{\frac{3}{2}}} \lesssim  \delta_1 \|(\tilde{\sigma},\tilde{v},\tilde{\eta})\|_{\Bi{s}\cap\dot{H}^3},
    \end{align*}
    \begin{align*}
        &\left\|\Delta\mathcal{R}(\sigma_1,v_1,\eta_1)-\Delta \mathcal{R}(\sigma_2,v_2,\eta_2)\right\|_{\Bi{s-2}\cap \Bi{\frac{3}{2}}} \\
        &\hspace{30pt}\lesssim  \left\|\mathcal{R}(\sigma_1,v_1,\eta_1)- \mathcal{R}(\sigma_2,v_2,\eta_2)\right\|_{\Bi{s}\cap\Bi{\frac{7}{2}}}\\
        &\hspace{30pt}\lesssim \delta_1 \|(\tilde{\sigma},\tilde{v},\tilde{\eta})\|_{\Bi{s}\cap\dot{H}^4},
    \end{align*}
    and
    \begin{align*}
        &\|\dive((\rho_1v_1\cdot v_1)v_1)-\dive((\rho_2v_2\cdot v_2)v_2)\|_{\Bi{s-2}\cap\Bi{\frac{3}{2}}}\\
        &\hspace{30pt}\lesssim \|(\rho_1v_1\cdot v_1)v_1-(\rho_2v_2\cdot v_2)v_2\|_{\Bi{s-1}\cap\Bi{\frac{5}{2}}}\\
        &\hspace{30pt}\lesssim \delta_1 \|(\tilde{\sigma},\tilde{v})\|_{\Bi{s}\cap\dot{H}^3}
    \end{align*}
    The estimates for the remaining terms in $\tilde{G}$ and $\tilde{H}_1$ follow similarly.
\end{proof}
By repeating the proof of Proposition \ref{lowd} for $(\sigma,v,\eta)$ in the case $s=s_0=1/2$, we obtain the following proposition.
\begin{prop} \label{low}
    Under the assumption of Proposition $\ref{aprioriesti}$ with small $\delta_1>0$, we have
    \begin{align*} 
        &\|(\sigma,v,\eta)\|_{C_{T_1}(\Bi{\frac{1}{2}})} \lesssim \delta_1 \|(\sigma,v,\eta)\|_{C_{T_1}(\Bi{\frac{1}{2}}\cap\dot{H}^3)}\nonumber \\
        &\hspace{60pt}+ \|(\sigma_0,v_0,\eta_0)\|_{\Bi{\frac{1}{2}}}+\|f\|_{C_{T_1}(\Bi{-\frac{3}{2}}\cap\Bi{-\frac{1}{2}})},
    \end{align*}
    where $\sigma_0=\rho_0-\rho_\infty$ and $\eta_0=\theta_0-\theta_\infty$.
\end{prop}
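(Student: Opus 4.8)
The plan is to rerun the argument of Proposition~\ref{lowd}, now for the solution $(\sigma,v,\eta)$ itself rather than for a difference of two solutions, and at the single index $s=s_0=1/2$. Because $s=s_0$ the time weight $(1+t)^{(s-s_0)/2}$ is trivial, so the output is a uniform-in-time bound, which is exactly the baseline homogeneous Besov estimate needed in Proposition~\ref{aprioriesti}. The first step is the equivalence $\|(\sigma,v,\eta)(t)\|_{\Bi{1/2}}\sim\|U(t)\|_{\Bi{1/2}}$, valid once $\delta_1$ is small: since $m=\rho_\infty v+\sigma v$ and $\mathcal{E}=c_V\rho_\infty\eta+(\text{quadratic and higher order})$, the map $(\sigma,v,\eta)\mapsto U$ is an invertible linear map plus a remainder that Lemma~\ref{bilinearlemma} bounds by $\delta_1\|(\sigma,v,\eta)\|_{\Bi{1/2}\cap\dot{H}^{k}}$; the same holds at $t=0$, giving $\|U_0\|_{\Bi{1/2}}\sim\|(\sigma_0,v_0,\eta_0)\|_{\Bi{1/2}}$. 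It therefore suffices to estimate $\|U\|_{C_{T_1}(\Bi{1/2})}$ through the Duhamel formula for $U$.

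I would split the Duhamel source into the divergence/potential part $\Phi_N=(0,G,\sqrt{c_V/\theta_\infty}\,H_1)^{\mathsf{T}}$ and the remainder $\Gamma=(0,\rho f,\sqrt{c_V/\theta_\infty}\,H_2+(1+\theta_\infty q_2/p_2)\rho f\cdot v)^{\mathsf{T}}$. The linear term is handled directly by Lemma~\ref{decayinfty} at $s=s_0=1/2$: the low-frequency part obeys $\|\dot S_{j_0}e^{tA}U_0\|_{\Bi{1/2}}\lesssim\|U_0\|_{\Bi{1/2}}$ (trivial weight) and the high-frequency part decays like $e^{-ct}\|U_0\|_{\Bi{1/2}}$, so $\sup_t\|e^{tA}U_0\|_{\Bi{1/2}}\lesssim\|(\sigma_0,v_0,\eta_0)\|_{\Bi{1/2}}$. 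For the two Duhamel integrals I would use only the $s=1/2$ endpoint of the reasoning in Lemma~\ref{phidecay}, which avoids the decaying interpolation range entirely: its low-frequency part is estimated by duality (Lemma~\ref{fund2}(ii)) against a test function $\psi$ with $\|\psi\|_{\Bo{-1/2}}\le1$, combined with $\int_0^t\|\dot S_{j_0}e^{\tau A^*}\psi\|_{\Bo{3/2}}\,d\tau\lesssim\|\psi\|_{\Bo{-1/2}}$, which is exactly (\ref{semidecay3}) for $A^*$ with $r=1$; this reduces it to the uniform-in-time quantity $\sup_t\|\Phi_N(t)\|_{\Bi{-3/2}}$, with no time decay required. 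The high-frequency part is controlled by the exponential estimate (\ref{semidecay2}), costing $\sup_t\|\Phi_N(t)\|_{\Bi{1/2}}$.

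The remaining inputs are the nonlinear and force estimates, which follow the pattern of (\ref{tilgtilh}) and (\ref{gammadecay}). Because every term of $(G,H_1)$ is in divergence or potential form --- which is precisely what the modified energy functional $\mathcal{E}$ was introduced to guarantee --- the bilinear estimate (Lemma~\ref{bilinearlemma}), for the $\Bi{-3/2}$ part, together with the embedding $\dot{H}^{3}\hookrightarrow\Bi{5/2}$ and the product rule in $\dot{H}^{3}$, for the $\Bi{1/2}$ part, and Lemma~\ref{compos} for the composite pressure quantities $Q_1,Q_2$, yield $\|(G,H_1)\|_{\Bi{-3/2}\cap\Bi{1/2}}\lesssim\delta_1\|(\sigma,v,\eta)\|_{\Bi{1/2}\cap\dot{H}^{3}}$, one factor always being placed in a small norm to extract $\delta_1$. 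Here the worst term $\kappa\Delta\mathcal{R}$ costs only $\|\mathcal{R}\|_{\Bi{5/2}}$, which closes with $\dot{H}^{3}$; this is one derivative less than the $\dot{H}^{4}$ of Proposition~\ref{lowd}, precisely because the conclusion is restricted to $s=1/2$ and the space $\Bi{3/2}$ of Lemma~\ref{phidecay} is replaced by $\Bi{1/2}$. In $\Gamma$, the term $H_2$ is genuinely higher order and carries a factor $\delta_1$, while the external force enters linearly through $\rho_\infty f$ in the momentum component (in contrast to Proposition~\ref{lowd}, where the common force cancels to leading order); since this component is parabolic, its low frequencies are absorbed through (\ref{semidecay3}) at regularity $\Bi{-3/2}$ and its high frequencies through the dissipative smoothing of the momentum block, so that $f$ appears only through $\|f\|_{\Bi{-3/2}\cap\Bi{-1/2}}$. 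Collecting the three contributions gives the asserted inequality.

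The main obstacle is the uniform-in-time, i.e. non-decaying, control of the Duhamel integral at the critical baseline index $\Bi{1/2}$: there the pointwise-in-time decay estimate (\ref{semidecay}) only produces the non-integrable weight $(1+\tau)^{-1}$, so one cannot close the estimate by integrating pointwise bounds and must instead exploit the time-integrated maximal-regularity estimate (\ref{semidecay3}) together with the divergence/potential structure of the nonlinearity. A secondary delicate point is the sharp force regularity: the low/high-frequency splitting is what allows $f$ to appear only in $\Bi{-3/2}\cap\Bi{-1/2}$, rather than at the higher regularity a crude high-frequency bound would demand.
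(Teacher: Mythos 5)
Your proposal is correct and follows exactly the paper's route: the paper's entire argument for Proposition \ref{low} is the one-line instruction to repeat the proof of Proposition \ref{lowd} for $(\sigma,v,\eta)$ at $s=s_0=1/2$, and your write-up carries that out faithfully, correctly identifying that at the critical index the pointwise bound (\ref{semidecay}) only yields the non-integrable weight $(1+\tau)^{-1}$, so the Duhamel terms must instead be handled by the duality argument with (\ref{semidecay3}) as in the $s=1/2$ case of Lemma \ref{phidecay}, and that $\dot{H}^3$ (rather than the $\dot{H}^4$ of Proposition \ref{lowd}) suffices because the high norm needed on $(G,H_1)$ drops from $\Bi{\frac{3}{2}}$ to $\Bi{\frac{1}{2}}$. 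The only soft point --- on which the paper itself is silent --- is your claim that the high-frequency part of the Duhamel integral of $\rho_\infty f$ costs only $\|f\|_{\Bi{-\frac{1}{2}}}$: the stated estimate (\ref{semidecay2}) gives no derivative gain at high frequencies, so justifying that gain of one derivative requires either inspecting the spectral projections $P_n(\xi)$ in (\ref{projec}) (the parabolic modes smooth, while the non-decaying high-frequency mode couples to the momentum and energy components only through entries of size $O(|\xi|^{-1})$), or else weakening the force norm to $\|f\|_{\Bi{-\frac{3}{2}}\cap\Bi{\frac{1}{2}}}$, which the assumption (\ref{apriassump}) still controls and which is all that Proposition \ref{aprioriesti} needs.
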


The estimates in the homogeneous Sobolev spaces $\dot{H}^k$ are shown using the energy method for a solution to the initial value problem (\ref{inieq}).

\begin{prop} \label{high}
    Under the assumption of Proposition $\ref{aprioriesti}$ with small $\delta_1>0$, we have
    \begin{align}  \label{highesti}
        &\|(\sigma,v,\eta)\|_{C_{T_1}(\dot{H}^1\cap\dot{H}^k)} \lesssim \|(\sigma,v,\eta)\|_{C_{T_1}(\Bi{\frac{1}{2}})}\nonumber \\
        &\hspace{60pt}+ \|(\sigma_0,v_0,\eta_0)\|_{\dot{H}^1\cap\dot{H}^{k}}+\|f\|_{C_{T_1}({H}^{k-1})}.
    \end{align}
\end{prop}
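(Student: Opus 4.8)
The plan is to prove (\ref{highesti}) by a weighted energy method applied to the reformulated system (\ref{energychap3}), exploiting the algebraic structure of the generator $\hat A(\xi)$ in (\ref{ahatmat}). Recall $U=(\sqrt{p_1}\sigma,m,\sqrt{c_V/\theta_\infty}\E)^{\mathsf T}$. For each integer $\ell$ with $1\le\ell\le k$ and each multi-index $\alpha$ with $|\alpha|=\ell$, I would apply $\pa$ to (\ref{energychap3}) and take the real $L^2$ inner product with $\pa U$. The decisive point is that the Hermitian part of $\hat A(\xi)$ is exactly its block-diagonal, dissipative part: the off-diagonal pressure--coupling blocks ($\pm i\sqrt{p_1}\,\xi$ and $\pm i(p_2/\rho_\infty)\sqrt{\theta_\infty/c_V}\,\xi$) are skew-Hermitian, so taking the real part annihilates them. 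Summing over $|\alpha|=\ell$ this yields, schematically,
\begin{equation*}
\frac{d}{dt}\,\tfrac12\|U\|_{\Ho{\ell}}^2 + c\big(\|m\|_{\Ho{\ell+1}}^2+\|\E\|_{\Ho{\ell+1}}^2\big) = \sum_{|\alpha|=\ell}\langle \pa U,\pa N\rangle,
\end{equation*}
where $N=(0,\,G+\rho f,\,\sqrt{c_V/\theta_\infty}H+(1+\theta_\infty q_2/p_2)\rho f\cdot v)^{\mathsf T}$ collects the right-hand side of (\ref{energychap3}). Crucially, because the $(1,1)$ entry of $\hat A$ vanishes, this basic energy provides no dissipation for $\sigma$.

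Next I would recover the missing density dissipation. Since its absence would only give $\frac{d}{dt}\mathcal E_\ell\lesssim\delta_1\mathcal E_\ell$ and hence growth in time, I introduce the correction $J_\ell:=\sum_{|\alpha|=\ell-1}\langle\pa\n\sigma,\pa m\rangle$, placed one derivative below the top level. Differentiating $J_\ell$ and using $\p_t\sigma=-\dive m$ together with the momentum equation of (\ref{energychap3}), the pressure coupling $p_1\n\sigma$ produces the favorable term $-p_1\|\sigma\|_{\Ho{\ell}}^2$; the remaining contributions pair $\pa\n\sigma$ (density at level $\ell$) against $\pa\mathcal A m$ and $\pa\n\dive m$ (momentum at level $\ell+1$), which are precisely the orders controlled by the parabolic dissipation and are absorbed by Young's inequality. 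Adding $\kappa_0 J_\ell$ to the energy for a sufficiently small $\kappa_0>0$ keeps the total $\mathcal E_\ell^{\mathrm{tot}}:=\tfrac12\|U\|_{\Ho{\ell}}^2+\kappa_0 J_\ell$ equivalent to $\|U\|_{\Ho{\ell}}^2$, while upgrading the dissipation to the full $\widetilde D_\ell\gtrsim\|\sigma\|_{\Ho{\ell}}^2+\|m\|_{\Ho{\ell+1}}^2+\|\E\|_{\Ho{\ell+1}}^2$.

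For the nonlinear and forcing terms I would estimate $\langle\pa U,\pa N\rangle$ using the product, composition and commutator estimates of Lemmas \ref{bilinearlemma}, \ref{comp1}, \ref{compos} and \ref{commu} together with the smallness (\ref{apriassump}). All terms of $G,H_1,H_2$ are products or composite functions, so each contribution is bounded by $C\delta_1\big(\widetilde D_\ell+\|(\sigma,v,\eta)\|_{\Ho{1}\cap\Ho{k}}^2\big)$; the most delicate is the top-order quasilinear term $\mathcal A\big((\tfrac1\rho-\tfrac1{\rho_\infty})m\big)$, which after one integration by parts reduces to $\langle\pa\n m,(\tfrac1\rho-\tfrac1{\rho_\infty})\pa\n m\rangle$ plus commutators, and is absorbed into the dissipation because $\|\tfrac1\rho-\tfrac1{\rho_\infty}\|_{L^\infty}\lesssim\delta_1$. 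The forcing $\langle\pa m,\pa(\rho f)\rangle$ is handled by transferring, at the top level $\ell=k$, one derivative onto $m$, so that it is bounded by $\epsilon\|m\|_{\Ho{k+1}}^2+C\|f\|_{\Ho{k-1}}^2$ with the first term absorbed; the lower levels use $f\in\Ho{k-1}$ directly.

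Finally I would close the estimate by a low-frequency interpolation and Gronwall's inequality. The dissipation $\widetilde D_\ell$ controls $\sigma$ at level $\ell$ but only $(m,\E)$ at level $\ell+1$; for the low frequencies of $(m,\E)$ at level $\ell$, the definition $\Bi{1/2}=\dot B^{1/2}_{2,\infty}$ gives $\|\delj(m,\E)\|_{L^2}\le 2^{-j/2}\|(m,\E)\|_{\Bi{1/2}}$, and since $\sum_{j<0}2^{(2\ell-1)j}<\infty$ whenever $\ell\ge1$, one obtains $\|(m,\E)\|_{\Ho{\ell}}^2\lesssim\widetilde D_\ell+\|(m,\E)\|_{\Bi{1/2}}^2$. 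This is the origin of the norm $\Bi{1/2}$ on the right-hand side of (\ref{highesti}) and of the restriction to $\ell\ge1$. After transferring norms back to $(\sigma,v,\eta)$ by Lemma \ref{bilinearlemma} (using $m=\rho_\infty v+\sigma v$ and the definition of $\E$), this yields $\mathcal E_\ell^{\mathrm{tot}}\lesssim\widetilde D_\ell+\|(\sigma,v,\eta)\|_{\Bi{1/2}}^2$ and hence a differential inequality of the form
\begin{equation*}
\frac{d}{dt}\mathcal E_\ell^{\mathrm{tot}}+c\,\mathcal E_\ell^{\mathrm{tot}}\lesssim \delta_1\|(\sigma,v,\eta)\|_{\Ho{1}\cap\Ho{k}}^2+\|(\sigma,v,\eta)\|_{\Bi{1/2}}^2+\|f\|_{\Ho{k-1}}^2 .
\end{equation*}
Summing over $1\le\ell\le k$, absorbing the $\delta_1$-term for $\delta_1$ small, and integrating in time gives (\ref{highesti}). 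The main obstacle is exactly the absence of density dissipation combined with the requirement of a bound uniform in time: it forces both the correction term $J_\ell$ (engineered so its compensating terms cost only the available parabolic regularity) and the low-frequency interpolation against $\Bi{1/2}$, which is why the estimate is stated starting from $\Ho{1}$.
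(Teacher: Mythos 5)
Your overall architecture --- weighted energy identities, a cross term to recover density dissipation, low-frequency interpolation against $\Bi{\frac{1}{2}}$, then Gr\"onwall --- is exactly the paper's strategy for Proposition \ref{high}, whose energy is $E_k=\sum_{\ell=1}^{k}\|\nabla^\ell(\tfrac{p_1}{\rho_\infty^2}\sigma,v,\tfrac{c}{\theta_\infty}\eta)\|_{L^2}^2+\gamma\sum_{\ell=2}^{k}\langle\nabla^{\ell-1}v,\nabla^{\ell-1}\nabla\sigma\rangle$. The genuine gap is your choice to run the top-order estimates on the reformulated system (\ref{energychap3}). That system carries the quasilinear second-order terms in divergence form, i.e.\ $\mathcal{A}\bigl((\tfrac1\rho-\tfrac1{\rho_\infty})m\bigr)$ in $G$ and $\kappa\Delta\mathcal{R}$ in $H_1$, with the variable coefficient \emph{inside} the operator. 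In the pairing $\langle\pa\mathcal{A}((\tfrac1\rho-\tfrac1{\rho_\infty})m),\pa m\rangle$ with $|\alpha|=k$, one integration by parts leaves $\langle\nabla\pa((\tfrac1\rho-\tfrac1{\rho_\infty})m),\nabla\pa m\rangle$, and the Leibniz expansion contains the term $\bigl(\nabla\pa(\tfrac1\rho-\tfrac1{\rho_\infty})\bigr)m$ carrying $k+1$ derivatives of $\sigma$. This is not a commutator ``absorbed into the dissipation'': $\sigma$ has no parabolic smoothing, your energy controls only $\|\sigma\|_{\Ho{1}\cap\Ho{k}}$, and your correction $J_\ell$ recovers dissipation for $\sigma$ only up to level $k$, so $\|\nabla^{k+1}\sigma\|_{L^2}$ lies in no controlled norm; integrating by parts once more only trades it for $\|\nabla^{k+2}m\|_{L^2}$, which is also unavailable since the parabolic dissipation stops at $\Ho{k+1}$. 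The same loss occurs for $\kappa\Delta\mathcal{R}$: after one integration by parts, $\nabla^{k+1}\mathcal{R}$ contains $\nabla^{k+1}\sigma\cdot\eta$.

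This is precisely why the paper splits the two a priori estimates. The reformulation (\ref{energychap3}) is used only for the Besov bounds (Propositions \ref{lowd} and \ref{low}), where the nonlinearity enters through the Duhamel formula and the semigroup decay of Lemma \ref{decayinfty}, so no top-order energy pairing is ever taken. The $\dot{H}^k$ estimate of Proposition \ref{high} is instead performed on the primitive variables $(\sigma,v,\eta)$: there the dangerous terms appear as $(\tfrac1\rho-\tfrac1{\rho_\infty})\mathcal{A}v$ and $\kappa(\tfrac1\rho-\tfrac1{\rho_\infty})\Delta\theta$ inside $R_1,R_2$, with the coefficient \emph{outside} the operator, and the pairing $\langle\nabla^\ell R_1,\nabla^\ell v\rangle=-\langle\nabla^{\ell-1}R_1,\nabla^{\ell+1}v\rangle$ requires only $\|R_1\|_{\Ho{\ell-1}}$ with $\ell\le k$, hence at most $k-1$ derivatives on the coefficient and at most $k+1$ derivatives on $v$ --- both available (this is estimate (\ref{R1R2}), combined with the symmetrization identity (\ref{alphaiden}) for the transport term). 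To repair your argument you would have to undo the reformulation for these specific terms, i.e.\ work with $w\mathcal{A}m$ rather than $\mathcal{A}(wm)$ (which is essentially what the paper does); as written, the step claiming absorption via $\|\tfrac1\rho-\tfrac1{\rho_\infty}\|_{L^\infty}\lesssim\delta_1$ fails.
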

\begin{proof}
    Since $(\sigma,v,\eta)$ satisfies (\ref{eq}), we have the energy identities:
    \begin{align} \label{energyid}
        &\frac{1}{2} \p_t \left\|\nabla^\ell\left(\frac{p_1}{\rho_\infty^2}\sigma,\, v, \,\frac{c}{\theta_\infty}\eta\right)\right\|_{L^2}^2 \nonumber\\
        &+ \frac{\mu}{\rho_\infty}\|\nabla^{\ell+1} v\|_{L^2}^2 + \frac{\mu+\mu'}{\rho_\infty}\|\nabla^{\ell}\dive\,v\|_{L^{2}}^2 + \frac{\kappa}{\rho_\infty \theta_\infty}\|\nabla^{\ell+1}\eta\|_{L^2}^2 \nonumber\\
        &= \frac{p_1}{\rho_\infty^2}\langle \nabla^{\ell}\dive(\sigma v), \nabla^{\ell} \sigma\rangle + \langle \nabla^\ell R_1,\nabla^\ell v\rangle + \left\langle\frac{1}{\theta} \nabla^\ell R_2, \nabla^{\ell}\eta\right\rangle
    \end{align}
    for $\ell=1,\ldots,k$, and
    \begin{align} \label{energyidsigma}
        &\partial_t \langle \nabla^{\ell-1} v, \nabla^{\ell-1} \nabla\sigma \rangle +  \frac{P'(\rho_\infty)}{\rho_\infty}\|\nabla^\ell\sigma\|_{L^2}^2 \nonumber\\
        &= \rho_\infty\|\nabla^{\ell-1}\dive\,v\|_{L^2}^2 -\langle\nabla^{\ell-1}v,\nabla^{\ell-1}\nabla \dive(\sigma v)\rangle  \nonumber\\
        &\hspace{70pt}+\left\langle \nabla^{\ell-1}\left(\frac{1}{\rho_\infty}\mathcal{A}v+R_1\right),\nabla^{\ell-1}\nabla\sigma\right\rangle,
    \end{align}
    for any $\ell=2,\ldots,k$, where the functions $R_1=R_1(\sigma,v,\eta)$, $R_2=R_2(\sigma,v,\eta)$ and $R_3=R_3(\sigma,v,\eta)$ are given by
    \begin{align*}
        &R_1 = -v\cdot \nabla v +\left(\frac{1}{\rho}-\frac{1}{\rho_\infty}\right)\mathcal{A}v\\
        & \hspace{50pt}- \left(\frac{1}{\rho}-\frac{1}{\rho_\infty}\right)(p_1\nabla\sigma+p_2\nabla\eta) - \frac{\nabla Q_1}{\rho}+f
    \end{align*}
    and
    \begin{align*}
        R_2= -c v\cdot\nabla\eta -\left(\frac{\theta\p_\theta P}{\rho}-\frac{\theta_\infty p_2}{\rho_\infty} \right)\dive\,v +\kappa\left(\frac{1}{\rho}-\frac{1}{\rho_\infty}\right)\Delta \theta + \frac{\Psi}{\rho}.
    \end{align*}                                                    
    For any multi-index $\alpha\in\mathbb{Z}_{\geq 0}^3 $ with $|\alpha|=k$, 
    \begin{align} \label{alphaiden}
        \langle \p^\alpha_x \dive(\sigma v),\p_x^{\alpha}\sigma \rangle = -\frac{1}{2} \langle \dive\,v \p^\alpha_x \sigma,\p^\alpha_x\sigma\rangle + \sum_{0\leq \beta<\alpha} \langle \dive(\p^\beta_x \sigma \p^{\alpha-\beta}_x v), \p^\alpha_x \sigma\rangle,    
    \end{align}
    so that we obtain
    \begin{align} \label{deriesti}
        &|\langle \nabla^\ell \dive(\sigma v),\nabla^\ell \sigma \rangle| \nonumber \\
        &\lesssim \|v\|_{\dot{H}^1\cap\dot{H}^{k+1}}\|\sigma\|_{\dot{H}^2\cap\dot{H}^{k}}^2+\|v\|_{\dot{H}^2\cap\dot{H}^{k+1}}\|\sigma\|_{\dot{H}^1\cap\dot{H}^{k}}\|\sigma\|_{\dot{H}^2\cap\dot{H}^{k}},
    \end{align}
    for any $\ell=1,\ldots,k$. Sobolev's inequality implies that
    \begin{align} \label{R1R2}
        &\|R_1\|_{\dot{H}^{\ell-1}}+\|R_2\|_{\dot{H}^{\ell-1}} \nonumber\\
        &\hspace{20pt}\lesssim \|(v,\eta)\|_{\dot{H}^2\cap\dot{H}^{k+1}}\|(\sigma,v,\eta)\|_{\dot{H}^1\cap\dot{H}^{k+1}} +\|f\|_{H^{k-1}}
    \end{align}
    for $\ell=1,\ldots,k$.
    For $\gamma>0$, we define the energy function
    \begin{align*}
        E_k(t):=\sum_{\ell=1}^k\left\|\nabla^\ell\left(\frac{p_1}{\rho_\infty^2}\sigma,\, v, \,\frac{c}{\theta_\infty}\eta\right)(t)\right\|_{L^2}^2 + \gamma\sum_{\ell=2}^k\langle \nabla^{\ell-1} v(t), \nabla^{\ell-1} \nabla\sigma (t)\rangle
    \end{align*}
    for $0\leq t\leq T_1$. By Lemma \ref{bilinearlemma}, we have
    \begin{align*}
        E_k  \lesssim \|(\sigma,v,\eta)\|_{\Bi{\frac{1}{2}}}^2+\|(v,\eta)\|_{\dot{H}^{2}\cap\dot{H}^{k+1}}^2 + \|\sigma\|_{\dot{H}^2\cap\dot{H}^k}^2.
    \end{align*}
    for $0\leq t\leq T_1$.
    Combining (\ref{energyid}), (\ref{energyidsigma}), (\ref{deriesti}) and (\ref{R1R2}), if $\delta_1>0$ and $\gamma>0$ are small enough, then there is a constant $c>0$ such that
    \begin{align*}
        &\frac{d}{dt}E_{k} + cE_k \lesssim  \|(\sigma,v,\eta)\|_{C_T(\Bi{\frac{1}{2}})}^2+\|f\|_{C_{T_1}(H^{k-1})}^2\ \ \ \mathrm{for}\ \ \ 0\leq t\leq T_1.
    \end{align*}
    Therefore, Gr\"{o}nwall's inequality implies
    \begin{align*}
        E_k(t) &\lesssim e^{-ct}E_k(0)+\int_0^t e^{-c\tau}d\tau  (\|(\sigma,v,\eta)\|_{C_{T_1}(\Bi{\frac{1}{2}})}^2+\|f\|_{C_{T_1}(H^{k-1})}^2)\\
        &\lesssim  E_k(0)+\|(\sigma,v,\eta)\|_{C_{T_1}(\Bi{\frac{1}{2}})}^2+\|f\|_{C_{T_1}(H^{k-1})}^2\ \ \ \mathrm{for}\ \ \ 0\leq t\leq T_1,
    \end{align*}
    so that we obtain the estimate (\ref{highesti}) when $\gamma>0$ is small.
\end{proof}
Propositions \ref{low} and \ref{high} now lead to Proposition \ref{aprioriesti}. The following proposition shows the time-weighted estimate for the $\dot{H}^4$ norm. The proof is carried out in a similar manner to that of Proposition \ref{high}.
\begin{prop} \label{highd}
    Under the assumption of Proposition $\ref{diffdecayapri}$, we have the estimate
    \begin{align}  \label{dot4desti}
        &\sup_{0\leq t\leq T_1}(1+t)^{\frac{3}{4}-\frac{\ep+s_0}{2}}\|(\tilde{\sigma},\tilde{v},\tilde{\eta})(t)\|_{\dot{H}^{4}}\nonumber\\
        &\hspace{60pt}\lesssim_{\ep}  \tilde{D}(T_1)+\delta_1\mathcal{D}_{\ep}(T_1)+ \|(\tilde{\sigma}_0,\tilde{v}_0,\tilde{\eta}_0)\|_{\Bi{s_0}\cap\dot{H}^{4}},
    \end{align}
    where 
    \begin{align*}
        \tilde{D}(T_1)=\sup_{0\leq t\leq T_1}(1+t)^{\frac{3}{4}-\frac{\ep+s_0}{2}}\|(\tilde{\sigma},\tilde{v},\tilde{\eta})(t)\|_{\Bi{\frac{3}{2}-\ep}}
    \end{align*}
\end{prop}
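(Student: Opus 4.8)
The plan is to mimic the proof of Proposition \ref{high} for the difference system satisfied by $(\tilde{\sigma},\tilde{v},\tilde{\eta})$, and then to combine the resulting top-order energy estimate with a time-weighted Gr\"onwall argument. Write $\beta:=\frac{3}{4}-\frac{\ep+s_0}{2}$, so that the target weight is $(1+t)^{\beta}$ and $2\beta=\frac{3}{2}-\ep-s_0>0$ by the hypotheses $s_0\leq 1/2$ and $0<\ep<1/2$. Subtracting the two copies of (\ref{inieq}), and using that $(\rho_i,v_i,\theta_i)$ are driven by the same force $f$, I would derive energy identities for $(\tilde{\sigma},\tilde{v},\tilde{\eta})$ of precisely the shape (\ref{energyid})--(\ref{energyidsigma}), with $R_1,R_2$ replaced by the differences $\tilde{R}_1,\tilde{R}_2$. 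The crucial structural point is that the difference system is \emph{linear} in $(\tilde{\sigma},\tilde{v},\tilde{\eta})$ with coefficients assembled from the two solutions; hence, by Lemma \ref{compos} and Lemma \ref{bilinearlemma}, each remainder is linear in $(\tilde{\sigma},\tilde{v},\tilde{\eta})$ and carries a coefficient of size $O(\delta_1)$. This linearity is exactly what produces the factor $\delta_1$ (and not $\sqrt{\delta_1}$) in front of $\mathcal{D}_{\ep}(T_1)$.

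I would then form the top-order energy $\tilde{E}(t)$ by summing the identities (\ref{energyid}) over $\ell=2,3,4$ and adding the cross terms $\gamma\sum_{\ell=3}^{4}\langle\nabla^{\ell-1}\tilde{v},\nabla^{\ell-1}\nabla\tilde{\sigma}\rangle$ coming from (\ref{energyidsigma}), with $\gamma>0$ small. This gives $\tilde{E}\sim\|(\tilde{\sigma},\tilde{v},\tilde{\eta})\|_{\dot{H}^2\cap\dot{H}^4}^2$, so in particular $\|(\tilde{\sigma},\tilde{v},\tilde{\eta})\|_{\dot{H}^4}^2\lesssim\tilde{E}$; the cross terms supply the density dissipation $\|\tilde{\sigma}\|_{\dot{H}^3\cap\dot{H}^4}^2$ that the parabolic part does not. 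The lowest regularity present in $\tilde{E}$ is $\dot{H}^2$, and since $2>3/2-\ep$ the low-frequency portion of every term in $\tilde{E}$ is dominated by $\Bi{3/2-\ep}$: for $j<j_0$ one has $2^{2\ell j}\lesssim 2^{2(3/2-\ep)j}$, and summing the geometric series gives $\|(\tilde{\sigma},\tilde{v},\tilde{\eta})\|_{\dot{H}^2,\mathrm{low}}^2\lesssim\|(\tilde{\sigma},\tilde{v},\tilde{\eta})\|_{\Bi{3/2-\ep}}^2$. Absorbing the high-frequency remainder into the dissipation and inserting the forcing estimate $\|\tilde{R}_i\|_{\dot{H}^4}\lesssim\delta_1(1+t)^{-\beta}\mathcal{D}_{\ep}(T_1)$ proved below, the bilinear forcing $\langle\nabla^4\tilde{R}_i,\nabla^4(\tilde{\sigma},\tilde{v},\tilde{\eta})\rangle\lesssim\|\tilde{R}_i\|_{\dot{H}^4}\sqrt{\tilde{E}}$ leads to
\begin{align*}
    \frac{d}{dt}\tilde{E}+c\,\tilde{E}\lesssim \delta_1(1+t)^{-\beta}\mathcal{D}_{\ep}(T_1)\sqrt{\tilde{E}}+\|(\tilde{\sigma},\tilde{v},\tilde{\eta})\|_{\Bi{3/2-\ep}}^2,
\end{align*}
where, by the definition of $\tilde{D}(T_1)$, the last term is $\leq(1+t)^{-2\beta}\tilde{D}(T_1)^2$.

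The heart of the argument is the forcing bound $\|\tilde{R}_i(t)\|_{\dot{H}^4}\lesssim\delta_1(1+t)^{-\beta}\mathcal{D}_{\ep}(T_1)$, and this is the step I expect to be the main obstacle, since the correct decay rate $\beta$ must be extracted from products in which one factor is a background quantity carrying no time decay. After the reformulation, each contribution to $\tilde{R}_i$ is a product of a coefficient controlled in $\Bi{1/2}\cap\dot{H}^k$ by $\delta_1$ uniformly in $t$ (by (\ref{apriassump})) with one factor of $(\tilde{\sigma},\tilde{v},\tilde{\eta})$ and its derivatives up to order four. The delicate terms are those in which all four derivatives land on the background; there I would use $\|\nabla^4(\mathrm{bg})\|_{L^2}\lesssim\delta_1$ together with the interpolation (Lemma \ref{fund2}(iii) and Lemma \ref{em})
\begin{align*}
    \|(\tilde{\sigma},\tilde{v},\tilde{\eta})\|_{L^\infty}\lesssim\|(\tilde{\sigma},\tilde{v},\tilde{\eta})\|_{\Bo{3/2}}\lesssim\|(\tilde{\sigma},\tilde{v},\tilde{\eta})\|_{\Bi{3/2-\ep}}^{1-\theta}\|(\tilde{\sigma},\tilde{v},\tilde{\eta})\|_{\dot{H}^4}^{\theta},
\end{align*}
whose two endpoints are \emph{both} assigned the weight $(1+t)^{-\beta}$ by $\mathcal{D}_{\ep}(T_1)$, so that $\|(\tilde{\sigma},\tilde{v},\tilde{\eta})\|_{L^\infty}\lesssim(1+t)^{-\beta}\mathcal{D}_{\ep}(T_1)$ with no loss of $\ep$. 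The remaining products, in which the derivatives are split, are estimated by Lemma \ref{bilinearlemma} and Sobolev products exactly as in (\ref{deriesti})--(\ref{R1R2}); in each of them the $(\tilde{\sigma},\tilde{v},\tilde{\eta})$-factor sits in some $\dot{H}^{\ell}$ with $2\leq\ell\leq 4$, which again carries the weight $(1+t)^{-\beta}$ upon interpolating the weights that $\mathcal{D}_{\ep}(T_1)$ attaches to $\Bi{3/2-\ep}$ and $\dot{H}^4$. This yields the claimed linear, weight-$\beta$ bound.

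Finally, applying Young's inequality to the bilinear term absorbs $\frac{c}{2}\tilde{E}$ into the left-hand side and leaves
\begin{align*}
    \frac{d}{dt}\tilde{E}+\frac{c}{2}\tilde{E}\lesssim (1+t)^{-2\beta}\left(\delta_1^2\mathcal{D}_{\ep}(T_1)^2+\tilde{D}(T_1)^2\right).
\end{align*}
Since $\tilde{E}(0)\lesssim\|(\tilde{\sigma}_0,\tilde{v}_0,\tilde{\eta}_0)\|_{\Bi{s_0}\cap\dot{H}^4}^2$ (interpolating $\dot{H}^{\ell}$, $2\leq\ell\leq 4$, between $\Bi{s_0}$ and $\dot{H}^4$), $e^{-ct/2}\lesssim(1+t)^{-2\beta}$, and $\int_0^t e^{-\frac{c}{2}(t-\tau)}(1+\tau)^{-2\beta}\,d\tau\lesssim(1+t)^{-2\beta}$ (valid because $2\beta>0$), Gr\"onwall's inequality gives
\begin{align*}
    \tilde{E}(t)\lesssim(1+t)^{-2\beta}\left(\|(\tilde{\sigma}_0,\tilde{v}_0,\tilde{\eta}_0)\|_{\Bi{s_0}\cap\dot{H}^4}^2+\delta_1^2\mathcal{D}_{\ep}(T_1)^2+\tilde{D}(T_1)^2\right).
\end{align*}
Taking square roots, multiplying by $(1+t)^{\beta}$ and passing to the supremum over $0\leq t\leq T_1$ then yields (\ref{dot4desti}).
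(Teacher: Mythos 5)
Your overall skeleton is the same as the paper's: energy identities for the difference system, a small-$\gamma$ cross term $\langle\nabla^{3}\tilde{v},\nabla^{3}\nabla\tilde{\sigma}\rangle$ to generate density dissipation, control of the low-frequency part of the energy by $\|(\tilde{\sigma},\tilde{v},\tilde{\eta})\|_{\Bi{3/2-\ep}}$ (which is exactly where $\tilde{D}(T_1)$ enters), and a weighted Gr\"onwall argument convolving $e^{-ct}$ against $(1+t)^{-2\beta}$ (the paper works only at the top level $\ell=4$ rather than summing $\ell=2,3,4$; this difference is immaterial). However, the step you yourself flagged as the main obstacle contains a genuine gap, and it is a derivative-counting problem, not the time-weight problem you addressed. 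The claimed forcing bound $\|\tilde{R}_i\|_{\dot{H}^4}\lesssim\delta_1(1+t)^{-\beta}\mathcal{D}_{\ep}(T_1)$ cannot hold: the remainders contain the quasilinear second-order terms, e.g. $\bigl(\tfrac{1}{\rho_2}-\tfrac{1}{\rho_\infty}\bigr)\mathcal{A}\tilde{v}$, $\kappa\bigl(\tfrac{1}{\rho_2}-\tfrac{1}{\rho_\infty}\bigr)\Delta\tilde{\eta}$ and $\bigl(\tfrac{1}{\rho_1}-\tfrac{1}{\rho_2}\bigr)\mathcal{A}v_1$, so their $\dot{H}^4$ norms involve \emph{six} derivatives of the difference $(\tilde{v},\tilde{\eta})$ — while $\mathcal{D}_{\ep}$ tops out at $\dot{H}^4$ and the dissipation only yields $\nabla^5$ — and six derivatives of the background $v_1$, which is unavailable when $k=5$ since the background is only controlled in $\dot{H}^k$. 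Consequently your inequality $\langle\nabla^4\tilde{R}_i,\nabla^4\cdot\rangle\lesssim\|\tilde{R}_i\|_{\dot{H}^4}\sqrt{\tilde{E}}$ estimates an object that is not finite under the standing assumptions.

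The repair is precisely what the paper does, and it explains the shape of the paper's stated bounds: do not measure $\tilde{R}_i$ in $\dot{H}^4$; estimate the pairings directly, integrating by parts once so that the extra derivative lands on the solution factor, which produces
\begin{align*}
    \langle \nabla^4 \tilde{R}_1,\nabla^4 \tilde{v}\rangle
    \lesssim \delta_1\|(\tilde{\sigma},\tilde{v},\tilde{\eta})\|_{\Bi{\frac{3}{2}-\ep}\cap\dot{H}^4}\|\nabla^5(\tilde{v},\tilde{\eta})\|_{L^2}
    + \delta_1 \|\nabla^5(\tilde{v},\tilde{\eta})\|_{L^2}^2 .
\end{align*}
The first term is handled by Young's inequality against the dissipation and yields the admissible source $\delta_1^2(1+t)^{-2\beta}\mathcal{D}_{\ep}(T_1)^2$; the second, genuinely top-order term — for which your formulation has no slot, since it is not of the form $(\text{weighted quantity})\times\sqrt{\tilde{E}}$ — must be absorbed into the left-hand side dissipation using the smallness of $\delta_1$. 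The analogous correction is needed for the cross-term identity, where the paper additionally absorbs the $\delta_1$-free term $\|\nabla^5\tilde{v}\|_{L^2}\|\nabla^4\tilde{\sigma}\|_{L^2}$ by taking $\gamma$ small. With these modifications, the rest of your argument (initial-data interpolation, $e^{-ct/2}\lesssim(1+t)^{-2\beta}$, and the convolution estimate) goes through as written.
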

\begin{proof}
    Since $(\sigma_1,v_1,\eta_1)$ and $(\sigma_2,v_2,\eta_2)$ satisfy (\ref{eq}), respectively, we have the energy estimate for $(\tilde{\sigma},\tilde{v},\tilde{\eta})=(\sigma_1,v_1,\eta_1)-(\sigma_2,v_2,\eta_2)$:
    \begin{align} \label{energyid4}
        &\frac{1}{2} \p_t \left\|\nabla^4\left(\frac{p_1}{\rho_\infty^2}\tilde{\sigma},\, \tilde{v}, \,\frac{c}{\theta_\infty}\tilde{\eta}\right)\right\|_{L^2}^2 \nonumber\\
        &+ \frac{\mu}{\rho_\infty}\|\nabla^{5} \tilde{v}\|_{L^2}^2 + \frac{\mu+\mu'}{\rho_\infty}\|\nabla^{4}\dive\,\tilde{v}\|_{L^{2}}^2 + \frac{\kappa}{\rho_\infty \theta_\infty}\|\nabla^{5}\tilde{\eta}\|_{L^2}^2 \nonumber\\
        &= \frac{p_1}{\rho_\infty^2}\langle \nabla^{4}(\dive(\tilde{\sigma} v_1+\sigma_2 \tilde{v})), \nabla^{4} \tilde{\sigma}\rangle + \langle \nabla^4 \tilde{R}_1,\nabla^4 \tilde{v}\rangle \nonumber\\
        &\hspace{30pt}+\left\langle\nabla^4\left( \left(\frac{1}{\theta_1}-\frac{1}{\theta_2}\right) {R}_2\right), \nabla^{4}\tilde{\eta}\right\rangle + \left\langle \nabla^{4}\left(\frac{1}{\theta_2} \tilde{R}_2\right), \nabla^4\tilde{\eta}\right\rangle,
    \end{align}
    and
    \begin{align} \label{energyidsigma4}
        &\partial_t \langle \nabla^{3} \tilde{v}, \nabla^{3} \nabla\tilde{\sigma} \rangle +  \frac{P'(\rho_\infty)}{\rho_\infty}\|\nabla^4\tilde{\sigma}\|_{L^2}^2 \nonumber\\
        &= \rho_\infty\|\nabla^{3}\dive\,\tilde{v}\|_{L^2}^2 -\langle\nabla^{3}\tilde{v},\nabla^{3}\nabla \dive(\tilde{\sigma}v_1+\sigma_2 \tilde{v})\rangle  \nonumber\\
        &\hspace{70pt}+\left\langle \nabla^{3}\left(\frac{1}{\rho_\infty}\mathcal{A}\tilde{v}+\tilde{R_1}\right),\nabla^{3}\nabla\tilde{\sigma}\right\rangle,
    \end{align}
    where $\tilde{R}_1=R_1(\sigma_1,v_1,\eta_1)-R_1(\sigma_2,v_2,\eta_2)$, $\tilde{R}_2=R_2(\sigma_1,v_1,\eta_1)-R_2(\sigma_2,v_2,\eta_2)$. By Sobolev's inequality and Lemma \ref{bilinearlemma}, we have
    \begin{align*}
         &\langle \nabla^4 \tilde{R}_1,\nabla^4 \tilde{v}\rangle+\left\langle\nabla^4\left( \left(\frac{1}{\theta_1}-\frac{1}{\theta_2}\right) {R}_2\right), \nabla^{4}\tilde{\eta}\right\rangle + \left\langle \nabla^{4}\left(\frac{1}{\theta_2} \tilde{R}_2\right), \nabla^4\tilde{\eta}\right\rangle\\
         &\lesssim \delta_1\|(\tilde{\sigma},\tilde{v},\tilde{\eta})\|_{\Bi{\frac{3}{2}-\ep}\cap\dot{H}^4}\|\nabla^5(\tilde{v},\tilde{\eta})\|_{L^2}+ \delta_1 \|\nabla^5(\tilde{v},\tilde{\eta})\|_{L^2}^2
    \end{align*}
    and
    \begin{align*}
         \langle\nabla^{3}\tilde{v},\nabla^{3}\nabla \dive(\tilde{\sigma}v_1+\sigma_2 \tilde{v})\rangle+\left\langle \nabla^{3}\left(\frac{1}{\rho_\infty}\mathcal{A}\tilde{v}+\tilde{R_1}\right),\nabla^{3}\nabla\tilde{\sigma}\right\rangle\\
         \lesssim \delta_1 \|(\tilde{\sigma},\tilde{v})\|_{\Bi{\frac{3}{2}-\ep}\cap\dot{H}^3} \|(\nabla^5\tilde{v},\nabla^4\tilde{\sigma})\|_{L^2} + \|\nabla^5\tilde{v}\|_{L^2} \|\nabla^4\tilde{\sigma}\|_{L^2}.
    \end{align*}
    By using the identity (\ref{alphaiden}), we have
    \begin{align*}
        &\langle \nabla^{4}(\dive(\tilde{\sigma} v_1 +\sigma_2 \tilde{v})), \nabla^{4} \tilde{\sigma}\rangle \\
        &\hspace{30pt}\lesssim \delta_1 ( \|(\tilde{\sigma},\tilde{v})\|_{\Bi{\frac{3}{2}-\ep}\cap\dot{H}^4} +\|\nabla^5\tilde{v}\|_{L^2})\|\nabla^4\tilde{\sigma}\|_{L^2}.
    \end{align*}
    Therefore, there exists a constant $c>0$ such that the energy
    \begin{align*}
        E_{\gamma}(t):=  \left\|\nabla^4\left(\frac{p_1}{\rho_\infty^2}\tilde{\sigma},\, \tilde{v}, \,\frac{c}{\theta_\infty}\tilde{\eta}\right)(t)\right\|_{L^2}^2+\gamma\langle \nabla^3\tilde{v}(t),\nabla^3\nabla\tilde{\sigma}(t)\rangle,\ \ \ 0\leq t\leq T_1
    \end{align*}
    satisfies the differential inequality
    \begin{align*}
        \frac{d}{dt}E_\gamma + cE_\gamma \lesssim (\tilde{D}(T_1)+\delta_1\mathcal{D}_\ep(T_1))^2 (1+t)^{-\frac{3}{2}+{\ep+s_0}}
    \end{align*}
    when $\gamma>0$ and $\delta_1>0$ are sufficiently small. Then, Gr\"{o}nwall's inequality implies the estimate $(\ref{dot4desti})$.
\end{proof}
Now, Proposition \ref{diffdecayapri} is obtained by Proposition \ref{lowd} and Proposition \ref{highd}.
\subsection{Local existence}
We will use the following local existence result to prove Theorem \ref{timethm}. The proof is carried out by establishing the local existence and estimates for both the transport equation and the quasilinear parabolic equation, and then combining these to construct a sequence of approximate solutions converging to the local solution.
\begin{prop} \label{localexis}
    Let $k\in\mathbb{Z}_{\geq3}$ Then there exist a constant $c_2$ with $0<c_2<1$ such that if $(\sigma_0,v_0,\eta_0)\in \Bi{1/2}\cap \dot{H}^{k}$, 
    \begin{align*}
        \|\sigma_0\|_{\Bo{\frac{3}{2}}} \leq c_2 \rho_\infty
    \end{align*}
     and $f\in C_{T_2}(\Bi{-1/2}\cap\dot{H}^{k-1})$, then there exist constants $T_2>0$ and $\tilde{c}_2$ with $0<\tilde{c_2}<1$ and a unique solution $(\rho,v,\eta)=(\sigma+\rho_\infty,v,\eta+\theta_\infty)$ of $(\ref{eq})$ in $(0,T_2)\times \mathbb{R}^3$ satisfying $(\rho,v,\theta)|_{t=0}=(\rho_0,v_0,\theta_0)=(\sigma_0+\rho_\infty,v,\eta)$, $(\sigma,v,\eta) \in C_{T_2}(\Bi{1/2}\cap\dot{H}^{k})$, $(v,\eta)\in L^{2}_{T_2}(\dot{H}^{k+1})$, 
    \begin{align*}
        \|(\sigma,v,\eta)\|_{C_{T_2}(\Bi{\frac{1}{2}}\cap\dot{H}^k)}\leq 2 \|(\sigma_0,v_0,\eta_0)\|_{\Bi{\frac{1}{2}}\cap\dot{H}^k}
    \end{align*} 
    and
    \begin{align*}
        \|\sigma\|_{C_{T_2}(L^\infty)}\leq \tilde{c}_2\rho_\infty.
    \end{align*}
\end{prop}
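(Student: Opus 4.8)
The plan is to construct the solution by a decoupled iteration that separates the first-order transport dynamics of the density from the variable-coefficient parabolic dynamics of the velocity and temperature. I would set $(\sigma^0,v^0,\eta^0)=(\sigma_0,v_0,\eta_0)$ and, given $(\sigma^n,v^n,\eta^n)$ with $\rho^n=\sigma^n+\rho_\infty$, define $(\sigma^{n+1},v^{n+1},\eta^{n+1})$ by first solving the transport equation
\begin{align*}
\partial_t\sigma^{n+1}+v^n\cdot\nabla\sigma^{n+1}=-(\sigma^{n+1}+\rho_\infty)\dive v^n,\qquad \sigma^{n+1}|_{t=0}=\sigma_0,
\end{align*}
and then, with $\rho^{n+1}=\sigma^{n+1}+\rho_\infty$ now frozen, solving the linear parabolic system
\begin{align*}
&\partial_t v^{n+1}-\frac{\mu}{\rho^{n+1}}\Delta v^{n+1}-\frac{\mu+\mu'}{\rho^{n+1}}\nabla\dive v^{n+1}=F(\sigma^{n+1},v^n,\eta^n)+f,\\
&\partial_t\eta^{n+1}-\frac{\kappa}{c_V\rho^{n+1}}\Delta\eta^{n+1}=G(\sigma^{n+1},v^n,\eta^n),
\end{align*}
with the natural initial data, where $F,G$ collect the lower-order terms of (\ref{eq}). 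Each step is thus a genuinely linear problem solvable by standard parabolic theory, and the quasilinear coupling of (\ref{eq}) is recovered only in the limit.

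The argument rests on two a priori building blocks, to be proved on a short interval $[0,T_2]$. The first is a transport estimate propagating the regularity $\Bi{1/2}\cap\dot H^k$: using the transport theory of \cite{MR2768550} together with the embedding $\Bo{3/2}\hookrightarrow L^\infty$, one controls $\|\sigma^{n+1}(t)\|_{\Bi{1/2}\cap\dot H^k}$ by $\|\sigma_0\|_{\Bi{1/2}\cap\dot H^k}$ times an exponential in $\int_0^t\|\nabla v^n\|_{\Bo{3/2}}\,d\tau$, plus a source $\int_0^t\|(\sigma^{n+1}+\rho_\infty)\dive v^n\|_{\Bi{1/2}\cap\dot H^k}\,d\tau$; the hypothesis $\|\sigma_0\|_{\Bo{3/2}}\le c_2\rho_\infty$ and the same transport estimate in $\Bo{3/2}$ keep $\|\sigma^{n+1}\|_{L^\infty}\le\tilde c_2\rho_\infty<\rho_\infty$, so that $\rho^{n+1}$ stays bounded away from $0$ and the coefficient perturbation $\mu(1/\rho^{n+1}-1/\rho_\infty)$ is small in $L^\infty$ and admissible for Lemma \ref{comp1} and Lemma \ref{compos}. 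The second block is a parabolic estimate for the $(v^{n+1},\eta^{n+1})$ system: treating the operator as the constant-coefficient one of (\ref{ahatmat}) plus the small $L^\infty$ perturbation, the energy method (as in Proposition \ref{high}) for the $\dot H^k$ component and the semigroup bounds of Lemma \ref{decayinfty}, in particular (\ref{semidecay}), for the $\Bi{1/2}$ component yield $(v^{n+1},\eta^{n+1})\in C_{T_2}(\Bi{1/2}\cap\dot H^k)$ together with the smoothing gain $(v^{n+1},\eta^{n+1})\in L^2_{T_2}(\dot H^{k+1})$. Crucially, this extra derivative in time-integrated form is exactly what feeds the transport source above: by Cauchy--Schwarz $\int_0^t\|\dive v^n\|_{\dot H^k}\,d\tau\le\sqrt{t}\,\|v^n\|_{L^2_{T_2}(\dot H^{k+1})}$, which carries a favorable factor $\sqrt{T_2}$.

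Combining the two blocks, I would show that for $T_2>0$ chosen small enough (depending on the data through $\|(\sigma_0,v_0,\eta_0)\|_{\Bi{1/2}\cap\dot H^k}$ and on $\|f\|_{C_{T_2}(\Bi{-1/2}\cap\dot H^{k-1})}$) the iteration preserves the ball
\begin{align*}
\|(\sigma^n,v^n,\eta^n)\|_{C_{T_2}(\Bi{1/2}\cap\dot H^k)}\le 2\|(\sigma_0,v_0,\eta_0)\|_{\Bi{1/2}\cap\dot H^k},\qquad \|\sigma^n\|_{C_{T_2}(L^\infty)}\le\tilde c_2\rho_\infty,
\end{align*}
the smallness of $T_2$ rather than of the data being what closes the bound, which is why no smallness assumption on the initial data is needed. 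Next I would prove that $(\sigma^n,v^n,\eta^n)$ is Cauchy in the weaker space $C_{T_2}(\Bi{-1/2}\cap\dot H^{k-1})$ by applying the same transport and parabolic estimates to the differences $\sigma^{n+1}-\sigma^n$, etc., where the bilinear estimate (Lemma \ref{bilinearlemma}) and the commutator estimate (Lemma \ref{commu}) produce a contraction factor $\lesssim\sqrt{T_2}$ one order below the top regularity. The limit $(\sigma,v,\eta)$ solves (\ref{eq}); the uniform bounds and the Fatou property (Lemma \ref{fund2}(iv)) upgrade it to $C_{T_2}(\Bi{1/2}\cap\dot H^k)$ with $(v,\eta)\in L^2_{T_2}(\dot H^{k+1})$ and give the stated bounds, and uniqueness follows from the same difference estimate.

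The main obstacle is the derivative loss in the transport--parabolic coupling: the top-order density estimate in $\dot H^k$ calls for $v\in\dot H^{k+1}$, which the density equation cannot supply, so the whole scheme hinges on the parabolic smoothing $L^2_{T_2}(\dot H^{k+1})$ being strong enough---and time-integrable with a $\sqrt{T_2}$ gain---to absorb this loss while simultaneously keeping the density strictly positive via the $L^\infty$ bound. A secondary difficulty is the endpoint space $\Bi{1/2}$, where the convection and quasilinear terms must be handled through the semigroup bound (\ref{semidecay}), the commutator estimate of Lemma \ref{commu}, and the composition estimates of Lemmas \ref{comp1}--\ref{compos}, rather than by naive product rules.
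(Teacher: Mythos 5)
Your iteration scheme is essentially the paper's: the paper likewise decouples the system into a linear transport equation for the density (driven by the previous velocity) and a linear parabolic system for $(v,\eta)$ with the density frozen in the second-order coefficient, proves uniform bounds on a short interval where smallness of $T_2$ (not of the data) closes the estimates, obtains contraction in a weaker norm, and passes to the limit via the Fatou property. The bookkeeping differences (which iterate is frozen where, contraction in $C_{T_2}(\Bi{-1/2}\cap\dot H^{k-1})$ versus the paper's $C_{T_2}(\Bi{1/2})$) are immaterial.

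There is, however, a genuine gap in your parabolic step. You claim that since $\|\sigma^{n+1}\|_{L^\infty}\le\tilde c_2\rho_\infty<\rho_\infty$, the coefficient perturbation $\mu(1/\rho^{n+1}-1/\rho_\infty)$ is \emph{small} in $L^\infty$, and you then propose to treat the operator as the constant-coefficient one of (\ref{ahatmat}) plus a small perturbation, handled by the semigroup bounds of Lemma \ref{decayinfty} and an energy argument in the style of Proposition \ref{high}. This inference is false: keeping the density away from vacuum, $\rho^{n+1}\ge(1-\tilde c_2)\rho_\infty$, only gives \emph{boundedness} of $1/\rho^{n+1}-1/\rho_\infty=-\sigma^{n+1}/(\rho^{n+1}\rho_\infty)$, whose size can reach $\tilde c_2/\bigl((1-\tilde c_2)\rho_\infty\bigr)$, i.e., comparable to or larger than the constant coefficient $1/\rho_\infty$ itself. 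The hypothesis $\|\sigma_0\|_{\Bo{\frac{3}{2}}}\le c_2\rho_\infty$ encodes non-vacuum, not smallness --- indeed the proposition's purpose, stressed in the paper's introduction, is local existence \emph{without} smallness of the data. With an $O(1)$ coefficient perturbation the perturbative scheme fails exactly at top order: in the $\dot H^k$ energy estimate the term $\langle(\mu/\rho^{n+1}-\mu/\rho_\infty)\nabla^{k+1}v^{n+1},\nabla^{k+1}v^{n+1}\rangle$ has no sign, is of the same size as the dissipation $(\mu/\rho_\infty)\|\nabla^{k+1}v^{n+1}\|_{L^2}^2$, and carries no factor of $T_2$ (it is quadratic in the dissipation norm), so it cannot be absorbed; similarly, in any Duhamel or maximal-regularity bound for the $\Bi{1/2}$ and $L^2_{T_2}(\dot H^{k+1})$ components, the source $(\mu/\rho^{n+1}-\mu/\rho_\infty)\Delta v^{n+1}$ sits exactly at the regularity the smoothing recovers and reappears on the right with a non-small constant. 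The correct repair --- and what the paper does --- is to keep the variable coefficient intact, obtaining coercivity purely from positivity, $\int\frac{\mu}{\rho_\infty+\sigma_{n-1}}|\nabla^{k+1}v_n|^2$, and, for the Besov component, performing the energy estimate on each dyadic block with the commutator $[\delj,\rho_{n-1}^{-1}]\mathcal{A}$ controlled by Lemma \ref{commu}; these commutator contributions are of lower order and do carry the $\sqrt{T_2}$ gains you rely on. (Alternatively one could force the smallness you want by choosing the constant $c_2$ sufficiently small --- the statement only asserts existence of some $c_2$ --- but that choice must be made explicitly; it does not follow, as you assert, from the density being bounded away from zero.)
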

\begin{proof}
    Lemma \ref{em}, there is a constant $C_2> 1$ such that
    \begin{align*}
        \|\tilde{\sigma}\|_{L^{\infty}}\leq C_2\|\tilde{\sigma}\|_{\Bo{\frac{3}{2}}}\ \ \ \mathrm{for\ any}\ \ \ \tilde{\sigma}\in\Bo{\frac{3}{2}}. 
    \end{align*}
    Let $c_2=1/(2C_2)$ and assume that an initial value $\sigma_0$ satisfies
    \begin{align*}
        \|\sigma_0\|_{\Bo{\frac{3}{2}}}\leq c_2 \rho_\infty.
    \end{align*} 
    Then, we have 
    \begin{align*}
        \inf_{x\in\mathbb{R}^3}\rho_0(x)\geq \frac{1}{2}\rho_\infty.
    \end{align*}
    For fixed $T_2$ with $0<T_2\leq 1$, which we will choose later, we construct the approximate sequence $\{(\rho_n,v_n,\eta_n)\}_{n\geq 0}=\{(\rho_\infty+\sigma_n,v_n,\eta_n)\}_{n\geq 0}$ with
    \begin{align*}
        \{(\sigma_n,v_n,\eta_n)\}_{n\geq 0}\subset C_{T_2}(\Bi{1/2}\cap\dot{H}^k)
    \end{align*}
    by induction.  Let $n\in\mathbb{Z}_{>0}$. Assume $(\sigma_{n-1},v_{n-1},\eta_{n-1})\in C_{T_2}(\Bi{1/2}\cap\dot{H}^{k})$ satisfies $(v_{n-1},\eta_{n-1})\in C_{T_2}(\dot{H}^{k+1})$,
    \begin{align*}
        &\|(\sigma_{n-1},v_{n-1},\eta_{n-1})\|_{C_{T_2}(\Bi{\frac{1}{2}}\cap\dot{H}^k)} + \|(v_{n-1},\eta_{n-1})\|_{L^2_{T_2}(\dot{H}^{k+1})}\\
        &\hspace{70pt}\leq2\|(\sigma_0,v_0,\eta_0)\|_{\Bi{\frac{1}{2}}\cap\dot{H}^k}=:2 E_0
    \end{align*}
    and
    \begin{align*}
        \|\sigma_{n-1}\|_{C_{T_2}(\Bi{\frac{3}{2}})} \leq \tilde{c}_2 \rho_\infty,
    \end{align*}
    where $\tilde{c}_2:=(1+c_2)/2$.
    Let $\sigma_n$ be a solution of the initial value problem of the transport equation
    \begin{equation} \label{transequation}
        \left\{
        \begin{aligned}
            &\partial_t \sigma_n +v_{n-1}\cdot \nabla \sigma_n= \tilde{F}_{n-1}\ \ \ \mathrm{in}\ \ \ (0,T_2)\times\mathbb{R}^3,\\
            &\sigma_n|_{t=0}=\sigma_0\ \ \ \mathrm{on}\ \ \ \mathbb{R}^3,
        \end{aligned}
        \right.
    \end{equation}
    where
    \begin{align*}
        \tilde{F}_{n-1}= -\sigma_{n-1} \dive\, v_{n-1}.
    \end{align*}
    By the identities
    \begin{align*}
        -\langle v_{n-1}\cdot\nabla \nabla^k \sigma_n, \nabla^k \sigma_n\rangle = \frac{1}{2} \langle \dive\,v_{n-1} \nabla^k\sigma_n,\nabla^k\sigma_n\rangle
    \end{align*}
    and, for any $j\in\mathbb{Z}$,
    \begin{align*}
        &-2^{j}\langle \delj(v_{n-1}\cdot\nabla\sigma_n),\delj\sigma_n\rangle \\
        &\hspace{50pt}= \frac{1}{2}2^j\langle \dive\,v_{n-1} \delj\sigma_n,\delj\sigma_n\rangle - 2^j\langle [\delj,v_{n-1}\cdot\nabla] \sigma_n,\delj \sigma_n\rangle,
    \end{align*}
    it follows from Lemma \ref{bilinearlemma} that, for any $0\leq t\leq T_2$ and $\alpha\in\mathbb{Z}^3_{\geq0}$, 
    \begin{align*}
        &\|\p^{\alpha}_x \sigma_n(t)\|_{L^2}^2- \|\p^{\alpha}_x \sigma_0\|_{L^2}^2 = \int_0^t \langle \dive\,v_{n-1}\p^{\alpha}_x\sigma_n +2\p^{\alpha}_x \tilde{F}_{n-1},\p^{\alpha}_x \sigma_n\rangle d\tau\\
        &\hspace{100pt} + \sum_{0<\beta\leq \alpha}\int_0^t \langle \p_x^\beta v_{n-1}\cdot\p_x^{\alpha-\beta}\nabla \sigma_n,\p_x^{\alpha}\sigma_n \rangle  d\tau \\
        &\leq T_2 \|\dive\, v_{n-1}\|_{C_{T_2}(\dot{H}^1\cap\dot{H}^{k})}\|\sigma_n\|^2_{C_{T_2}(\dot{H}^{1}\cap\dot{H}^k)}+T_2 E_0^2\|\p^{\alpha}_x\sigma_n\|_{C_{T_2}(L^2)},
    \end{align*}
    \begin{align*}
       &\|\sigma_n(t)\|_{\Bo{\frac{3}{2}}}^2 - \|\sigma_0\|_{\Bo{\frac{3}{2}}}^2 = \sum_{j\in\mathbb{Z}}2^{3j}\int_0^t \langle \delj (v_{n-1}\cdot\nabla\sigma_n +2 \tilde{F}_{n-1}),\delj \sigma_n\rangle d\tau\\
       &\hspace{5pt}\lesssim    \|\nabla v_{n-1}\|_{L^1_{T_2}(\Bo{\frac{3}{2}})} \|\sigma_n\|_{C_{T_2}(\Bo{\frac{3}{2}})}^2 \\
       &\hspace{20pt}+ (T_2 E_0 +T_2^{\frac{1}{2}}\|v_{n-1}\|_{L^2_{T_2}(\dot{H}^{k+1})})\|\sigma_{n-1}\|_{C_{T_2}(\Bo{\frac{3}{2}})} \|\sigma_n\|_{C_{T_2}(\Bo{\frac{3}{2}})}\\
       &\hspace{5pt}\lesssim T_2^{\frac{1}{2}} E_0 \|\sigma_n\|_{C_{T_2}(\Bo{\frac{3}{2}})}^2 + T_2^{\frac{1}{2}} E_0\|\sigma_{n-1}\|_{C_{T_2}(\Bo{\frac{3}{2}})} \|\sigma_n\|_{C_{T_2}(\Bo{\frac{3}{2}})}
    \end{align*}
    and 
    \begin{align*}
       \|\sigma_n\|_{C_{T_2}(\Bi{\frac{1}{2}})}^2 - \|\sigma_0\|_{\Bi{\frac{1}{2}}}^2\lesssim   T_2^{\frac{1}{2}} E_0\|\sigma_n\|_{C_{T_2}(\Bi{\frac{1}{2}})}^2 +T_2 E_0^2\|\sigma_n\|_{C_{T_2}(\Bi{\frac{1}{2}})},
    \end{align*}
    so that there exist constants $\tilde{c}>0$ such that if
    \begin{align*}
        T_2 (E_0+E_0^2) \leq \tilde{c},
    \end{align*}
    then we obtain
    \begin{align*}
        \|\sigma_n\|_{C_{T_2}(\Bi{\frac{1}{2}}\cap \dot{H}^k)}\leq 2E_0
    \end{align*}
    and 
    \begin{align} \label{rholowb}
        \|\sigma_n\|_{C_{T_2}(\Bo{\frac{3}{2}})} \leq \tilde{c_2} \rho_\infty.
    \end{align}
    Let $(v_n,\eta_n)$ be a solution of the initial value problem
    \begin{equation} \label{vetaeq}
        \left\{
        \begin{aligned}
            &\partial_t v_n   - \frac{1}{\rho_{n-1}}\mathcal{A} v_n=\tilde{G}_{n-1}\ \ \ \mathrm{in}\ \ \ (0,T_2)\times\mathbb{R}^3,\\
            &\partial_t \eta_n -\frac{\kappa}{c\rho_\infty}\Delta \eta_n = \tilde{H}_{n-1}\ \ \ \mathrm{in}\ \ \ (0,T_2)\times\mathbb{R}^3,\\
            &(v_n,\eta_n)|_{t=0}=(v_0,\eta_0)\ \ \ \mathrm{on}\ \ \ \mathbb{R}^3
        \end{aligned}
        \right.
    \end{equation}
    where 
    \begin{align*}
        \tilde{G}_{n-1}=-v_{n-1}\cdot\nabla v_{n-1} -\frac{\nabla (P(\rho_\infty+\sigma_{n-1},\theta_\infty+\eta_{n-1}))}{\rho_\infty+\sigma_{n-1}} + f
    \end{align*}
    and
    \begin{align*}
        &\tilde{H}_{n-1} = -v_{n-1}\cdot\nabla\eta_{n-1} \\
        &\hspace{20pt}- \frac{(\theta_\infty+\eta_{n-1})\p_\theta P(\rho_\infty+\sigma_{n-1},\theta_\infty+\eta_{n-1})}{c(\rho_\infty+\sigma_{n-1})}\dive v_{n-1} + \frac{\Psi(v_{n-1})}{1+\sigma_{n-1}}.
    \end{align*}
    Then, for any $0\leq t\leq T_2$, 
    \begin{align*}
        &\|\nabla^k v_n(t)\|_{L^2}^2 - \|\nabla^k v_0\|_{L^2}^2\\
        &\hspace{50pt}+\int_0^t  \frac{\mu}{\rho_\infty+\sigma_{n-1}}\|\nabla^{k+1}v_n\|_{L^2}^2+ \frac{\mu+\mu'}{\rho_\infty+\sigma_{n-1}}\|\nabla^{k+1}v_n\|_{L^2}^2 d\tau\\
        &\hspace{10pt}\lesssim\int_0^t( \|\sigma_{n-1}\|_{\dot{H}^1\cap\dot{H}^k}\|v_n\|_{\dot{H}^1\cap\dot{H}^k}+\|\tilde{G}_{n-1}\|_{\dot{H}^{k-1}})\|\nabla^{k+1} v_n\|_{L^2}  d\tau,
    \end{align*}
    \begin{align*}
        &\|\nabla^k \eta_n(t)\|_{L^2}^2 - \|\nabla^k \eta_0\|_{L^2}^2+\int_0^t  \frac{\kappa}{c(\rho_\infty+\sigma_{n-1})}\|\nabla^{k+1}\eta_n\|_{L^2}^2 d\tau\\
        &\hspace{10pt}\lesssim\int_0^t( \|\sigma_{n-1}\|_{\dot{H}^1\cap\dot{H}^k}\|\eta_n\|_{\dot{H}^1\cap\dot{H}^k}+\|\tilde{H}_{n-1}\|_{\dot{H}^{k-1}})\|\nabla^{k+1} \eta_n\|_{L^2}  d\tau.
    \end{align*}
    For any $j\in\mathbb{Z}$ and $t$ with $0\leq t\leq T_2$, 
    \begin{align*}
        &2^j\|\delj (v_n,\eta_n)(t)\|_{L^2}^2 - 2^j\|\delj (v_0,\eta_0)\|_{L^2}^2\\
        &\hspace{30pt}+2^j\int_0^t  \mu\|\nabla\delj v_n\|_{L^2}^2+(\mu+\mu') \|\dive\delj v_n\|_{L^2}^2+\frac{k}{c}\|\nabla \delj\eta_n\|_{L^2}^2 d\tau\\
        &\hspace{10pt}\lesssim\int_0^t\|\sigma_{n-1}\|_{\Bi{\frac{1}{2}}}\|(v_n,\eta_n)\|_{\Bi{\frac{1}{2}}\cap\dot{H}^{k+1}}2^{\frac{1}{2}j}\|\nabla\delj(v_n,\eta_n)\|_{L^2} d\tau\\
        &\hspace{130pt}+\int_0^t \|(\tilde{G}_{n-1},\tilde{H}_{n-1})\|_{L^2}2^{j}\|\delj(v_n,\eta_n)\|_{L^2}  d\tau.
    \end{align*}
    Since
    \begin{align*}
        \|(\tilde{G}_{n-1},\tilde{H}_{n-1})\|_{{H}^{k-1}}\lesssim E_0 + E_0^2,
    \end{align*}
    there is a constant $C>0$ such that
    \begin{align*}
        &\|(v_n,\eta_n)\|_{C_{T_2}(\Bi{\frac{1}{2}}\cap\dot{H}^k)}+ \|(v_n,\eta_n)\|_{L^2_{T_2}(\dot{H}^{k+1})}\\
        &\hspace{50pt}\leq \|(\sigma_0,v_0,\eta_0)\|_{\Bi{\frac{1}{2}}\cap\dot{H}^k}+ T_0^{\frac{1}{4}}(E_0 + E_0^2).
    \end{align*}
    Therefore, if we take $\tilde{c}=\tilde{c}(E_0)>0$ small enough, then we have 
    \begin{align} \label{nn}
        \|(\sigma_n,v_n,\eta_n)\|_{C_{T_2}(\Bi{\frac{1}{2}}\cap\dot{H}^k)}+ \|(v_n,\eta_n)\|_{L^2_{T_2}(\dot{H}^{k+1})} \leq 2E_0
    \end{align}
    when $T_2^{\frac{1}{4}}(E_0+E_0^2)\leq \tilde{c}$.  Now, we obtain the sequence $\{(\rho_n,v_n,\eta_n)\}_{n\geq 0}=\{(\rho_\infty+\sigma_n,v_n,\eta_n)\}_{n\geq 0}$
    \begin{align*}
        \{(\sigma_n,v_n,\eta_n)\}_{n\geq 0}\subset C_{T_2}(\Bi{1/2}\cap\dot{H}^k)
    \end{align*}
    for fixed $T_2>0$ with $T_2^{\frac{1}{4}}(E_0+E_0^2)\leq \tilde{c}$, which satisfies (\ref{transequation}), (\ref{rholowb}), (\ref{vetaeq}) and (\ref{nn}) for any $n\in\mathbb{Z}_{> 0}$. Let $\delta\sigma_n=\sigma_n-\sigma_{n-1}$, $\delta v_n=v_n-v_{n-1}$ and $\delta\eta_n=\eta_{n}-\eta_{n-1}$ for $n\in\mathbb{Z}_{>0}$. We next show the contractivity estimate
    \begin{align} \label{contracesti}
        \|(\delta\sigma_n,\delta v_n,\delta\eta_n)\|_{C_{T_2}(\Bi{\frac{1}{2}})} 
        \leq \frac{1}{2} \|(\delta\sigma_{n-1},\delta v_{n-1},\delta\eta_{n-1})\|_{C_{T_2}(\Bi{\frac{1}{2}})},\ \ n\in\mathbb{Z}_{\geq 2}
    \end{align}
    for small $T_2>0$.
    The pair of functions $(\delta v_n,\delta \eta_n)$ is a solution of the initial value problem
    \begin{equation} \label{delvetaeq}
        \left\{
        \begin{aligned}
            &\partial_t \delta v_n   - \frac{1}{\rho_{n-1}}\mathcal{A} \delta v_n \\
            &\hspace{30pt}=-\frac{\delta\sigma_{n-1}}{\rho_{n-1}\rho_{n-2}}\mathcal{A}v_{n-1}+\delta\tilde{G}_{n-1}\ \ \ \mathrm{in}\ \ \ (0,T_2)\times\mathbb{R}^3,\\
            &\partial_t \delta\eta_n -\frac{\kappa}{c\rho_{n-1}}\Delta\delta \eta_n = \delta\tilde{H}_{n-1}\ \ \ \mathrm{in}\ \ \ (0,T_2)\times\mathbb{R}^3,\\
            &(\delta v_n,\delta \eta_n)|_{t=0}=(0,0)\ \ \ \mathrm{on}\ \ \ \mathbb{R}^3,
        \end{aligned}
        \right.
    \end{equation}
    where $\delta G_{n-1}=\tilde{G}_{n-1}-\tilde{G}_{n-2}$, $\delta \tilde{H}_{n-1}=\tilde{H}_{n-1}-\tilde{H}_{n-2}$.
    By Lemma \ref{commu}, for any $j\in\mathbb{Z}$,
    \begin{align*}
        &2^{-\frac{1}{2}j}\left\|\left[\delj,\rho_{n-1}^{-1}\dive\right]\nabla \delta v_n + \left[\delj,\rho_{n-1}^{-1}\nabla\right]\dive \delta v_n \right\|_{L^2}  \\
        &\lesssim \|\nabla\sigma_{n-1}\|_{\Bo{\frac{3}{2}}}\|\delta v_n\|_{\Bi{\frac{1}{2}}}.
    \end{align*}
    By (\ref{rholowb}), for any $j\in\mathbb{Z}$,
    \begin{align*}
        &2^j \left\langle\frac{1}{\rho_{n-1}}\mathcal{A}\delj\delta v_n, \delj\delta v_n\right\rangle \\
        &\hspace{40pt}\gtrsim 2^j\|\nabla\delj\delta v_n\|_{L^2}^2 -  \|\nabla \sigma_{n-1}\|_{L^\infty}^2 \|\delta v_n\|_{\Bi{\frac{1}{2}}}^2.
    \end{align*}
    By Lemma \ref{bilinearlemma}, we have
    \begin{align*}
        &\left\|\frac{\delta\sigma_{n-1}}{\rho_{n-1}\rho_{n-2}}\mathcal{A}v_{n-1}\right\|_{\Bi{-\frac{1}{2}}} \lesssim \|\delta\sigma_{n-1}\|_{\Bi{\frac{1}{2}}}\|v_n\|_{\Bi{\frac{5}{2}}}.
    \end{align*}
    Then, for any $j\in\mathbb{Z}$ and $0\leq t\leq T_2$, we have
    \begin{align*}
        &2^j\|\delj\delta v_n(t)\|_{L^2}^2 + \int_0^{t} 2^j\|\nabla\delj \delta v_n\|_{L^2}^2 d\tau\\
        &\lesssim_{E_0} \int_0^t \|\nabla\sigma_{n-1}\|_{\Bo{\frac{3}{2}}}^2 \|\delta v_n\|_{\Bi{\frac{1}{2}}}^2 + \left\|\frac{\delta\sigma_{n-1}}{\rho_{n-1}\rho_{n-2}}\mathcal{A}v_{n-1}\right\|_{\Bi{-\frac{1}{2}}}^2 \\
        &\hspace{150pt} +\|\delta \tilde{G}_{n-1}\|_{\Bi{-\frac{1}{2}}}^2 d\tau\\
        &\lesssim T_2 E_0^2 \|(\delta v_{n},\delta \sigma_{n-1})\|_{C_{T_2}(\Bi{\frac{1}{2}})}^2 + T_2\|\delta \tilde{G}_{n-1}\|_{C_{T_2}(\Bi{-\frac{1}{2}})}^2.
    \end{align*}
    Since $\delta \eta_n$ is a solution of (\ref{delvetaeq}), for any $j\in\mathbb{Z}$ and $0\leq t\leq T_2$, we have
    \begin{align*}
        2^j \|\delj \delta \eta_n(t)\|_{L^2}^2 +\int_0^t 2^j \|\nabla\delj\delta\eta_n\|_{L^2}^2 d\tau\lesssim T_2 \|\delta \tilde{H}_{n-1}\|_{C_{T_2}(\Bi{-\frac{1}{2}})}^2.
    \end{align*}
    Thus, we obtain
    \begin{align} \label{netuesti}
        &\|(\delta v_n,\delta\eta_n)\|_{C_{T_2}(\Bi{\frac{1}{2}})} \lesssim T_2^{\frac{1}{2}}E_0\|(\delta v_n,\delta\sigma_{n-1})\|_{C_{T_2}(\Bi{\frac{1}{2}})}  \nonumber\\
        &\hspace{100pt}+ T_2^{\frac{1}{2}} \|(\delta \tilde{G}_{n-1},\delta\tilde{H}_{n-1})\|_{C_{T_2}(\Bi{-\frac{1}{2}})}.
    \end{align}
    The function $\delta \sigma_n$ satisfies the following initial value problem:
    \begin{equation} \label{deltatranseq}
        \left\{
        \begin{aligned}
            &\partial_t \delta\sigma_n +v_{n-1}\cdot \nabla \delta\sigma_n=-\delta v_{n-1}\cdot\nabla\sigma_{n-1}+ \delta\tilde{F}_{n-1}\ \ \mathrm{in}\ \ (0,T_2)\times\mathbb{R}^3,\\
            &\delta\sigma_n|_{t=0}=0\ \ \ \mathrm{on}\ \ \ \mathbb{R}^3,
        \end{aligned}
        \right.
    \end{equation}
    where $\delta \tilde{F}_{n-1}=\tilde{F}_{n-1}-\tilde{F}_{n-2}$. 
    By Lemma \ref{commu} and Lemma \ref{bilinearlemma}, we obtain
    \begin{align} \label{aaaa}
        &2^j\|\delj \delta\sigma_n(t)\|_{L^2}^2 + 2^j\int_0^t \langle v_{n-1}\cdot\nabla \delj \delta \sigma_n, \delj\delta\sigma_n\rangle d\tau \nonumber\\
        &\lesssim \int_0^t  2^j\|[\delj,v_{n-1}\cdot\nabla]\delta\sigma_n\|_{L^2}^2 \nonumber\\
        &\hspace{50pt}+ \|-\delta v_{n-1}\cdot\nabla\sigma_{n-1}+\delta \tilde{F}_{n-1}\|_{\Bi{\frac{1}{2}}}^2 d\tau\nonumber\\
        &\lesssim T_2E_0^2\|(\delta \sigma_n,\delta v_{n-1})\|_{C_{T_2}(\Bi{\frac{1}{2}})}^2+T_2\|\delta\tilde{F}_{n-1}\|_{C_{T_2}(\Bi{\frac{1}{2}})}^2.
    \end{align}
    By the estimate
    \begin{align*}
        \langle v_{n-1}\cdot\nabla\delj\delta\sigma_n, \delj \delta \sigma_n\rangle &= -\frac{1}{2}\langle \dive v_{n-1}, (\delj\delta\sigma_n)^2\rangle \\
        &\lesssim \|\dive v_{n-1}\|_{L^\infty}\|\delj\delta\sigma_{n}\|_{L^2}^2,
    \end{align*}
    it follows from (\ref{aaaa}) that
    \begin{align} \label{deltransesti}
        &\|\delta\sigma_{n}\|_{C_{T_2}(\Bi{\frac{1}{2}})} \lesssim T_2^\frac{1}{2}E_0\|(\delta\sigma_n,\delta v_{n-1})\|_{C_{T_2}(\Bi{\frac{1}{2}})} \nonumber\\
        &\hspace{120pt}+ T_2^\frac{1}{2} \|\delta\tilde{F}_{n-1}\|_{C_{T_2}(\Bi{\frac{1}{2}})}.
    \end{align}
    By Lemma \ref{bilinearlemma}, we have
    \begin{align} \label{exesti}
        &\|\delta\tilde{F}_{n-1}\|_{C_{T_2}(\Bi{\frac{1}{2}})} + \|(\delta\tilde{G}_{n-1},\delta\tilde{H}_{n-1})\|_{C_{T_2}(\Bi{-\frac{1}{2}})}\nonumber\\
        &\hspace{20pt}\lesssim (E_0+E_0^2)\|(\delta\sigma_{n-1},\delta v_{n-1},\delta\eta_{n-1})\|_{C_{T_2}(\Bi{\frac{1}{2}})}.
    \end{align}
    By (\ref{netuesti}), (\ref{deltransesti}) and (\ref{exesti}), if we take sufficiently small $T_2>0$, then the sequence $\{(\sigma_n,v_n,\eta_n)\}_{n\geq 0}$ satisfies (\ref{contracesti}). Therefore, the sequence $\{(\sigma_n,v_n,\eta_n)\}_{n\geq 0}$ is a Cauchy sequence in $C_{T_2}(\Bi{1/2})$, is a bounded sequence in $C_{T_2}(\dot{H}^1\cap\dot{H}^k)$ and satisfies (\ref{rholowb}), so that the convergence limit $(\rho,v,\theta)=(\rho_\infty+\sigma,v,\theta_\infty+\eta)$ of the sequence $\{(\rho_n,v_n,\eta_n)\}$ is a solution of (\ref{eq}) which satisfies
    \begin{align*}
        \|(\sigma,v,\eta)\|_{C_{T_2}(\Bi{\frac{1}{2}})} \leq 2E_0.
    \end{align*}
    and 
    \begin{align*}
        \|\sigma\|_{C_{T_2}(L^\infty)} \leq \tilde{c}_{2}\rho_\infty.
    \end{align*}
\end{proof}

\subsection{Proof of Theorem \ref{timethm}}
We now turn to prove Theorem \ref{timethm}. The following construction of time-periodic solutions is inspired by Valli \cite{MR753158}, which concerns a time-periodic problem for the isentropic compressible Navier-Stokes equation in a bounded domain.
\begin{proof}[Proof of Theorem $\ref{timethm}$]
    By combining Proposition \ref{aprioriesti} and Proposition \ref{localexis}, there is small $\delta>0$ such that if initial data $(\rho_0,v_0,\theta_0)=(\rho_\infty,0,\theta_\infty)+U_0$ and $f$ satisfy
    \begin{align*}
        \|U_0\|_{\Bi{\frac{1}{2}}\cap\dot{H}^k}+\|f\|_{C(\mathbb{R};\Bi{\frac{1}{2}}\cap\dot{H}^{k-1})}\leq \delta,
    \end{align*}
    then there exists a global solution $(\rho,v,\theta)=(\rho_\infty,0,\theta_\infty)+U$ of $(\ref{eq})$ satisfying
    \begin{align*}
        \|U\|_{C(\Bi{\frac{1}{2}}\cap\dot{H}^k)}\lesssim \delta.
    \end{align*}
    Let $U^*$ be the global solution to (\ref{eq}) with zero initial data. Then, by taking $\delta>0$ sufficiently small, Proposition \ref{aprioriesti} and Proposition \ref{diffdecayapri} imply that
    \begin{align*}
        \|U^*\|_{C(\Bi{\frac{1}{2}}\cap\dot{H}^k)}\lesssim \delta
    \end{align*}
    and
    \begin{align*}
        \|U^*(nT)-U^*(mT)\|_{\Bi{1}\cap\dot{H}^4} \lesssim ((n-m)T)^{-\frac{1}{4}}\delta
    \end{align*}
    for any $n,m\in\mathbb{Z}_{\geq 0}$ with $ m\leq n$. Since the sequence $\{U^*(nT)\}_{n\in\mathbb{Z}_{\geq 0}}$ is bounded in $\Bi{1/2}\cap\dot{H^k}$ and is a Cauchy sequence in $\Bi{1}\cap \dot{H}^4$, there is $U^*_\infty\in \Bi{1/2}\cap\dot{H}^k$ such that
    \begin{align*}
        \lim_{n\to\infty}U^*(nT)= U^*_\infty\ \ \ \mathrm{in}\ \ \ \Bi{1}\cap \dot{H}^4
    \end{align*}
    and
    \begin{align*}
        \|U^*_\infty\|_{\Bi{\frac{1}{2}}\cap\dot{H}^k}\lesssim \delta.
    \end{align*}
    Let $(\rho_T,v_T,\theta_T)=(\rho_\infty,0,\theta_\infty)+U_T$ be a global solution of (\ref{eq}) with initial data $(\rho_\infty,0,\theta_\infty)+U^*_\infty$. 
    Then, by Proposition \ref{diffdecayapri}, for any $n\in\mathbb{Z}_{>0}$, we have
    \begin{align} \label{uniquen}
        \|U_T(T)-U^*(nT))\|_{\Bi{1}}\lesssim  \|U^*_\infty-U^*((n-1)T)\|_{\Bi{1}\cap\dot{H^4}}.
    \end{align}
    Taking limit as $n\to\infty$ on both sides of (\ref{uniquen}) yields $U_T(T)=U^*_\infty$. By the local in time uniqueness in Proposition \ref{localexis}, we conclude that $(\rho_T,v_T,\theta_T)$ is the desired time-periodic solution. The decay estimate (\ref{maindecay}) in Theorem \ref{timethm} follows immediately from Lemma \ref{fund2} (iii), Lemma \ref{em} and Proposition \ref{diffdecayapri}.
\end{proof}

\section*{Acknowledgment}
    I would like to thank Professor Yoshiyuki Kagei for his valuable comments. This work was supported by JSPS KAKENHI Grant Number JP23KJ0942.

\bibliographystyle{plain}
\bibliography{myrefs}
\end{document}